\newcommand{\R}{\mathbb R}
\newcommand{\Z}{\mathbb Z}
\newcommand{\p}{\partial}
\newcommand{\ve}{\varepsilon}
\newcommand{\f}{\frac}
\newcommand{\al}{\alpha}
\newcommand{\ds}{\displaystyle}
\newcommand{\lc}{\left(}
\newcommand{\rc}{\right)}
\newcommand{\lp}{\left\|}
\newcommand{\rp}{\right\|}
\newcommand{\md}{\mathrm{d}}
\newcommand{\ra}{[2,\infty)\times\mathbb{R}^n}
\newcommand{\rb}{[2,t)\times\mathbb{R}^n}
\theoremstyle{plain}
\newtheorem{theorem}{Theorem}[section]
\newtheorem{proposition}{Proposition}[section]
\newtheorem{lemma}[theorem]{Lemma}
\newtheorem{remark}[theorem]{Remark}
\newcommand{\crit}{\textup{crit}}
\newcommand{\conf}{\textup{conf}}
\numberwithin{equation}{section}
\title{Morawetz type estimate for damped wave equation in $\R^n (n\ge 4)$ and its application}
\author{Daoyin He$^{1}$, \qquad Ning-An Lai $^{2}$\vspace{0.5cm}\\
	\small 1.
	School of Mathematics, Southeast University, Nanjing
	210089, China, 101012711@seu.edu.cn.\\
	\small 2. School of Mathematical Sciences, Zhejiang Normal University,
	Jinhua 321004, China, ninganlai@zjnu.edu.cn.\\}
\begin{document}
	
	\maketitle
	
	\begin{abstract}
		
		In this paper we establish a Morawetz type etimate for the linear inhomogeneous wave equation with time-dependent scale invariant damping in $\R^n (n\ge 4)$. The novelty is that we view the differential operator $\Box+\frac{\mu}{t}\partial_t$ as $n+1+\mu$ dimensional operator, then a well-matched multiplier is introduced. As an application, a sharp global existence result for the small data Cauchy problem of the semilinear wave equation
		\[
		\p_t^2u-\Delta u+\frac{\p_tu}{t}=|u|^p,~~~t>t_0\ge 0
		\]
		is obtained in $\R^n (n\ge 4)$.
	\end{abstract}
	
	\noindent
	\textbf{Keywords.} Morawetz type estimate, Semilinear wave equation, Scale invariant damping, Global existence, Strichartz estimate.
	
	\vskip 0.1 true cm
	
	\noindent
	\textbf{2010 Mathematical Subject Classification} 35L70, 35L65, 35L67
	\section{Introduction}
	
	In this article, we study the Cauchy problem for the semilinear wave equation with a scale-invariant damping term
	\begin{equation}\label{equ:original}
		\partial_t^2 \phi-\Delta \phi +\f{\mu}{t}\,\p_t\phi=|\phi|^p,
		\quad t>t_0, x\in\R^n
	\end{equation}
	where \(n\geq4\), \(\mu>0\), \(p>1\), and either \(t_0=0\) (singular problem) or \(t_0>0\) (regular problem). This kind of problem admits both interesting physical background and mathematical structure. It is closely related to the compressible Euler equations with time
	dependent damping, which can be used to describe the
	movement of smooth supersonic polytropic gases in a long divergent De Laval nozzle, see the introduction in \cite{HLYin2D} and references therein. Mathematically, the behavior of the solution can be both wave-like or heat-like, depending on the size of the positive constant $\mu$. Generally speaking, if $\mu$ is small, then the wave type equation dominates the heat type one, while if $\mu$ is big enough, the heat type equation will dominate.
	Hence we can expect a threshold of the constant $\mu_c(n)$, see the introduction in \cite{IKTW20, LST20} and references therein.

	There is extensive literature on the long time behavior (finite time blow-up or global existence) for these two kind problems, especially for the regular one. Next we provide a brief but far from exhaustive overview of the existing results. We start with the regular Cauchy problem
	\begin{equation}
		\label{REPD}
		\left\{
		\begin{aligned}
			& u_{tt}-\Delta u+\frac{\mu}{t}u_t=|u|^p, ~~~~t\ge t_0>0, x\in \R^n,\\
			& u(t_0, x)=\ve u_0(x), \quad u_t(t_0, x)=\ve u_1(x),~~~~x\in \R^n
		\end{aligned}
		\right.
	\end{equation}
	or its analogy form
	\begin{equation}
		\label{Sdamped}
		\left\{
		\begin{aligned}
			& u_{tt}-\Delta u+\frac{\mu}{1+t}u_t=|u|^p, ~~~~t\ge 0, x\in \R^n,\\
			& u(0, x)=\ve u_0(x), \quad u_t(0, x)=\ve u_1(x),~~~~x\in \R^n.
		\end{aligned}
		\right.
	\end{equation}
	People conjecture that there exists a threshold constant 
	\begin{equation*}%\label{muc}
		\mu_c(n)=\frac{n^2+n+2}{n+2},
	\end{equation*}
	such that the small data Cauchy problem \eqref{REPD} or \eqref{Sdamped} admits a critical power 
	\begin{equation*}
		%\label{pc}
		p_{\crit}=p_{\crit}(n,\mu)=\left\{
		\begin{aligned}
			& p_S(n+\mu), &&\text{if}\ \ 0\le \mu<\mu_c(n),\\
			& p_F(n), &&\text{if}\ \ \mu\ge\mu_c(n),\\
		\end{aligned}
		\right.
	\end{equation*}
	where ``critical" means that if $1<p<p_c$, then the corresponding problem has no global solution while if $p>p_c(n)$ the solution exists globally in time. And $p_S(n)$ denotes the positive root for the quadratic equation
	\begin{equation}\label{equ:strauss}
		(n-1)p^2-(n+1)p-2=0,
	\end{equation}
	and the critical exponent for the small data Cauchy problem of the classical semilinear wave equation
	\[
	u_{tt}-\Delta u=|u|^p,
	\]
	while $p_F(n)$ denotes the critical exponent for the small data Cauchy problem of the semilinear heat equation
	\[
	u_t-\Delta u=|u|^p,
	\]
	see \cite{Fuj, Gls1, Joh, LZ0, Sid, Strauss, Yor, Zho07} and references therein. The conjecture of the existence of $\mu_c(n)$ has appeared in many works, we refer to \cite{IS, IKTW20, LST20, HLYin2D} for example. This conjecture has been verified for a special case, i.e., $\mu=2$, at least in the low dimensions $(\R^n, 1\le n\le 3)$, see \cite{DL, Rei1, P, W1, IKTW20, Kato, Lai20, Wak16}. This special case is essentially the classical semilinear wave equation without damping, since after taking the so called Liouville transform
	\[
	v(t, x)=(1+t)^{\frac{\mu}{2}}u(t, x)
	\]
	in the equations $\eqref{Sdamped}_1$, we may get the equation for $v(t, x)$
	\begin{equation*}%\label{mu2}
		\begin{aligned}
			v_{tt}-\Delta v+\frac{\mu(2-\mu)}{4(1+t)^2}v=\frac{|v|^p}{(1+t)^{\frac{\mu(p-1)}{2}}},
		\end{aligned}
	\end{equation*}
	where the third mass term in the left side will disappear if $\mu=2$.
	
	For the general constant $\mu\neq 2$, the finite time blow up part of the conjecture mentioned above has been proved, see \cite{W1, IS, Pal1, LPT}. Recently, the global existence attracts much attention. By using a Morawetz type estimate, the second author \cite{LZ} established global existence result of radial solution in $\R^3$ for $\mu\in \left[\frac32, 2\right)$. In Li-Wang-Yin \cite{LWY25} and He-Li-Yin \cite{HLYin2D}, the global existence of weak solution in $\R^2$ for
	\[
	0<\mu <2, \mu\neq 1, p_S(2+\mu)<p<p_{\conf}(2, \mu)
	\]
	and 
	\[
	0<\mu <2, \mu\neq 1, p\ge p_{\conf}(2, \mu)
	\]
	was proved respectively, where the conformal exponent $p_{\conf}$ is defined in \eqref{pconf} below. What is more, it is announced in \cite{HLYin2D} that the global existence result for $\mu=1$ and $\mu>2$ will be proved in a forthcoming paper, hence by combing the corresponding result for $\mu=2$ in 
	\cite{DA, Rei1}, we may claim that the conjecture in $\R^2$ has been verified completely, while for the $1-D$ case we refer to \cite{DA21}. He-Sun-Zhang \cite{HSZ} studied the high dimensional case $(\R^n, n\ge 3)$, and established global existence result for 
	\[
	\mu\in (0, 1)\cup (1, 2), p_S(n, \mu)<p\le p_{\conf}(n, \mu).
	\]
	The method of the analysis in \cite{HLYin2D, HSZ} is based on making a change of time variable and transforming \eqref{Sdamped} into the corresponding Tricomi equation. However, this kind of transform does not work for \(\mu=1\), thus the case of \(\mu=1\) was not solved in \cite{HLYin2D, HSZ}.
	
	If $t_0=0$ in \eqref{REPD}, then it becomes to the singular problem, which is much more hard than the corresponding regular problem. The known results and references are relatively few, comparing to the regular one. D'Abbicco \cite{DA21} studied the global existence of weak solution to both the singular problem \eqref{REPD} and the regular problem \eqref{Sdamped} in $1-D$, and proved that there exists a unique global-in-time weak solution in $L^\infty\left([t_0, \infty), L^p\right)$ for $p>\max\left\{p_S(1+\mu), 3\right\}$. Fan-Lai-Takamura \cite{FLT25} and \cite{FLT252} established finite time blow up result for $1<p<p_S(n+\mu)$ in $\R^n, n\ge 1$ and $p=p_S(n+\mu)$ in $\R^n, n\ge 2$ respectively. However, due to the singularity, the conjecture for the singular Cauchy problem is far from being solved completely.
	
	We should mention that the wave equations with scale invariant damping are closely related to the Tricomi equations, which have significant application in fluid dynamics, we refer the reader to \cite{HWY1, HWY2, HWY3, HWY4, HWY5, LiG25} and references therein for more details.
	
	In order to prove global existence of solution to the small data Cauchy problem \eqref{REPD}, the key estimates are weighted Strichartz inequalities for linear inhomogeneous equation, which can be obtained by interpolation from \(L^1-L^\infty\) estimate and \(L^2-L^2\) estimate. However for the latter case, the establishment of weighted \(L^2-L^2\) estimate is difficult and requires techniques from degenerated Fourier integral operators. In order to avoid such complicated and involved computations, motivated by \cite{LZ0}, we turn to establish the weighted \(L^2-L^2\) estimate by Morawetz type energy estimate, which is obtained by introducing an appropriate multiplier (see \eqref{m1} below) and then integrating on the light cone. Comparing the multiplier \eqref{m1} with the one in \cite{LZ0} for the classical wave equations, one additional term $\frac{\mu}{2t}$ appears, which aims to be adapted to the scale invariant damping term $\frac{\mu}{t}u_t$ in the equation. Actually, the main part of the multiplier \eqref{m1} corresponds to the $n$ dimensional radial Laplace operator $\partial_r^2+\frac{n-1}{r}\partial_r$ is $\partial_r+\frac{n-1}{2r}$,
	which motivates us to see the operator $\partial_t^2+\frac{\mu}{t}\partial_t$ as $\mu+1$ dimensional radial Laplace operator with respect to the time variable $t$, and hence it is natural to add the term $\frac{\mu}{2t}$ in the multiplier.

	We come to our key weighted $L^2$-$L^2$ estimate. Consider the following Cauchy problem for the linear inhomogeneous wave equation
	\begin{equation}
		\begin{cases}
			&\partial_t^2 \phi-\triangle \phi+\frac{\mu}{t}\partial_t\phi=F(t,x), \quad (t,x)\in(t_0,\infty)\times\R^n, \\
			&\phi(t_0,x)=0,\quad \partial_t\phi(t_0,x)=0.
		\end{cases}
		\label{equ:linear-inhomo}
	\end{equation}
	Denote
	\begin{equation*}%\label{equ:u-u-bar}
		u=t-|x|, \quad \underline{u}=t+|x|, \quad r=|x|,
	\end{equation*}
	then our main result states as follows:
	\begin{theorem}\label{thm:Morawetz}
		Consider the Cauchy problem for the linear wave equation \eqref{equ:linear-inhomo} with \(n\geq4\) and \(t_0\geq0\).
		Then for \(\mu\in(0,2)\) and \(\gamma>1\), it holds
		\begin{equation}\label{equ:Morawetz}
			\begin{split}
				&\sup_{t_0< t\leq T}t^{\frac{\mu}{2}}\Bigg(\lp(1+|u|)^{\frac{1}{2}}
				\nabla_{t,x}\phi(t,\cdot)\rp_{L^2(\R^n)}
				+\lp(1+|u|)^{\frac{1}{2}}\frac{\phi(t,\cdot)}{r}\rp_{L^2(\R^n)}+\lp(1+|u|)^{\frac{1}{2}}\frac{\phi(t,\cdot)}{t}\rp_{L^2(\R^n)}
				\Bigg) \\
				\leq &C\lp(1+|u|)^{\frac{\gamma}{2}}
				(1+|\underline{u}|)^{\frac{\gamma}{2}}
				t^{\frac{\mu}{2}}F(t,x)\rp_{L^2([t_0,T]\times\R^n)},
			\end{split}
		\end{equation}
		where the positive constant \(C\) depends on \(n\), \(\mu\) and \(\gamma\).
		%\[C(T)=\left\{ \enspace
		%\begin{aligned}
		%&C, &&\gamma>1\\
		%&C\sqrt{\ln(2+T)}, &&\gamma=1.
		%\end{aligned}
		%\right.\]
	\end{theorem}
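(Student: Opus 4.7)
The plan is to run the Morawetz multiplier method with a multiplier tailored to the damping. Following the heuristic from the introduction that $\partial_t^2+\frac{\mu}{t}\partial_t$ behaves as the radial Laplacian in $\mu+1$ ``time-dimensions'' (so that $\partial_t+\frac{\mu}{2t}$ plays the role of the spatial half-operator $\partial_r+\frac{n-1}{2r}$), I would multiply the equation $L\phi=F$, with $L=\partial_t^2-\Delta+\frac{\mu}{t}\partial_t$, by
\begin{equation*}
\mathcal{M}\phi \;=\; w(t,u)\Bigl(\partial_t+\partial_r+\frac{n-1}{2r}+\frac{\mu}{2t}\Bigr)\phi,
\end{equation*}
where $w(t,u)=t^{\mu}\chi(u)$ for a suitable weight $\chi$ in the retarded variable $u=t-r$ designed so that $\sqrt{\chi(u)}\sim(1+|u|)^{1/2}$. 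The prefactor $t^{\mu}$ is chosen so that $t^{\mu}$ times the damping term $\frac{\mu}{t}\partial_t\phi$ combines with the rest into a clean $t$-derivative, while $\chi(u)$ is responsible for producing the $(1+|u|)^{1/2}$ weight in the final estimate.

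First I would compute $2\mathcal{M}\phi\cdot L\phi$ and, by completing the square and commuting derivatives, recast it as a divergence identity
\begin{equation*}
2\mathcal{M}\phi\cdot L\phi \;=\; \partial_t A + \nabla_x\!\cdot B + P,
\end{equation*}
in which $A$ is a positive-definite quadratic form in $\nabla_{t,x}\phi$, $\phi/r$ and $\phi/t$ weighted by $w$, $B$ is a spatial flux, and $P\ge 0$ collects the angular energy $|\nabla_\omega\phi|^2/r^2$ together with positive contributions produced by $\chi'(u)$. Integrating over $[t_0,T]\times\R^n$, the boundary term at $t=t_0$ vanishes by the zero initial data in \eqref{equ:linear-inhomo}, the spatial boundary is discarded by a standard approximation argument, and $\int_{\R^n}A(T,x)\,dx$ dominates the square of
\begin{equation*}
T^{\mu/2}\Bigl(\lp(1+|u|)^{1/2}\nabla_{t,x}\phi(T)\rp_{L^2}+\lp(1+|u|)^{1/2}\phi(T)/r\rp_{L^2}+\lp(1+|u|)^{1/2}\phi(T)/t\rp_{L^2}\Bigr),
\end{equation*}
the third norm being supplied by the extra $\frac{\mu}{2t}\phi$ piece inside $\mathcal{M}\phi$.

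The right-hand side $2\!\int\!\!\int\mathcal{M}\phi\cdot F\,dxdt$ is then bounded by Cauchy--Schwarz: splitting the weights so that the $\mathcal{M}\phi$-factor is absorbed into $A(T)$ and $P$, the remaining factor on $F$ adds up to $(1+|u|)^{\gamma/2}(1+|\underline{u}|)^{\gamma/2}t^{\mu/2}$ after using $t\le 1+\underline{u}$. The hypothesis $\gamma>1$ enters exactly here to guarantee that the resulting $u$-integral converges, so that the $\mathcal{M}\phi$-piece can indeed be absorbed without a logarithmic loss. Taking the supremum over $T\in(t_0,T_*]$ then yields \eqref{equ:Morawetz}.

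The main obstacle is step two: verifying that, once the $u$-weight $\chi(u)$ is inserted, all the cross terms --- between $\chi'(u)$ and the $\frac{n-1}{2r}$ and $\frac{\mu}{2t}$ factors, between the damping $\frac{\mu}{t}\partial_t\phi$ and the zeroth-order parts of $\mathcal{M}\phi$, and between the angular Laplacian and the Hardy-type terms arising from the conjugation --- actually assemble into a non-negative quadratic form $P$. Completing the square at this step is precisely where both hypotheses $n\ge 4$ (so that $(n-1)(n-3)\ge 3>0$ dominates the angular/Hardy error) and $\mu\in(0,2)$ (so that the effective mass $\mu(2-\mu)/(4t^2)$ produced by the time-conjugation is non-negative) enter critically; relaxing either causes $P$ to lose its definiteness.
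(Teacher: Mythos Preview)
There are two genuine gaps.

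First, your multiplier contains only the outgoing null direction $\partial_t+\partial_r+\tfrac{n-1}{2r}+\tfrac{\mu}{2t}$. If you set $\psi=r^{(n-1)/2}t^{\mu/2}\phi$ and run the computation, the $\partial_tA$ piece you extract on a constant-$t$ slice controls only $\chi(u)\bigl((\partial_t+\partial_r)\psi\bigr)^2$, the angular energy, and the potential term $V\psi^2$; it never sees $\bigl((\partial_t-\partial_r)\psi\bigr)^2$, so $A$ is \emph{not} coercive in the full gradient $\nabla_{t,x}\phi$ and your claimed domination of the left side of \eqref{equ:Morawetz} fails. The paper's multiplier is instead the two-sided object
\[
X=(t+r)\Bigl(\partial_t+\partial_r+\tfrac{n-1}{2r}+\tfrac{\mu}{2t}\Bigr)+|t-r|\Bigl(\partial_t-\partial_r-\tfrac{n-1}{2r}+\tfrac{\mu}{2t}\Bigr),
\]
and it is precisely the sum of the two squared null-derivative terms, together with the positive zeroth-order contribution $\bigl(\tfrac{(n-1)(n-3)}{4r^2}-\tfrac{\mu(\mu-2)}{4t^2}\bigr)\phi^2$, that yields the pointwise lower bound by $|t-r|\,t^{\mu}\bigl(\phi_t^2+\phi_r^2+\phi^2/r^2+\phi^2/t^2\bigr)$ on each time slice.

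Second, even the correct two-sided multiplier produces only the weight $|u|^{1/2}$, not $(1+|u|)^{1/2}$: if one tries to replace $|t-r|$ by $1+|t-r|$ in $X$, the delicate sign cancellation between the two halves of the identity is destroyed (for example, in the region $t\ge r$ the combined coefficient in front of the Hardy term becomes $-1/r^3$). The paper therefore splits \eqref{equ:Morawetz} into the $|u|^{1/2}$-weighted estimate coming from $X$ and a companion unweighted estimate, and the latter is \emph{not} a multiplier estimate at all: the obvious choice $t^{\mu}\partial_t\phi$ generates the indefinite bulk term $\tfrac{\mu}{2}t^{\mu-1}\bigl((\partial_t\phi)^2-|\nabla_x\phi|^2\bigr)$, so the paper instead derives the unweighted bound from the explicit Hankel-function representation of the propagator via a dyadic frequency decomposition, combined with Hardy's inequality for the $\phi/r$ term. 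Your outline provides no substitute for this second ingredient.
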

	\iffalse
	\begin{remark}
		The proof of \eqref{equ:Morawetz} is based on Lemma~\ref{Mora} and Lemma~\ref{lem:plus1}. However, the method in this paper can not be adopted to give the estimate in Lemma~\ref{Mora} with \(\nabla_xu\) replaced by \(\partial_tu\) when \(\mu\in[1,2)\). We will try to solve this problem in our future work.
	\end{remark}
	\fi
	As an application of our result, we consider global existence of small amplitude solution for \eqref{equ:original}. Recall that the critical exponent \(p_{\crit}(n,\mu)\) of \eqref{equ:original} is the smallest exponent \(p_{\crit}\) such that, if \(p>p_{\crit}\), then there exists a unique global small data solution to the problem, whereas if \(1<p\leq p_{\crit}\), then the solution blows up in finite time.
	For the case of \(\mu=0\), or the classical wave equation, the critical exponent \(p_{\crit}\) is the positive root of the quadratic equation \eqref{equ:strauss}.

	It is well known that when the damping term \(\frac{\mu}{t}\partial_tu\) appears, we have a shift from \(n\) to \(n+\mu\) in the coefficients of the quadratic equations of \(p\). Thus the critical exponent for \eqref{equ:original} should be the positive root of
	\begin{equation*}%\label{equ:crit}
		(n+\mu-1)p^2-(n+\mu+1)p-2=0,
	\end{equation*}
	while the conformal exponent for \eqref{equ:original} should be
	\begin{equation}\label{pconf}
		p_{\conf}(n,\mu)=\frac{n+\mu+3}{n+\mu-1}.
	\end{equation}
	In \cite{HSZ}, the regular Cauchy problem of \eqref{equ:original} was studied and the global existence result for small data solution for \(n\geq3\), \(\mu\in(0,1)\cup(1,2)\) and \(p_{\crit}(n,\mu)<p\leq p_{\conf}(n,\mu)\) was obtained.
	
	In this article, we will apply the Morawetz type energy estimate \eqref{equ:Morawetz} established in Theorem \ref{thm:Morawetz} to study the remaining case \(\mu=1\). With the Morawetz type energy estimate \eqref{equ:Morawetz} in hand, the weighted \(L^2-L^2\) estimate for the solution of \eqref{equ:linear-inhomo} follows immediately. While the weighted \(L^1-L^\infty\) estimate can be obtained by writing out the solution \(\phi\) of  \eqref{equ:linear-inhomo} in terms of the Fourier integral operators.
	(Here we emphasize that for the case $\mu=1$, the amplitude function of the Fourier integral operators admits an extra singularity of logarithm order as the frequency variable approaches to zero (see \eqref{equ:z-small-1}). Hence the case \(\mu=1\) is more difficult than the case \(\mu\in(0,1)\cup(1,2)\)). Then an interpolation between the weighted \(L^2-L^2\) estimate and weighted \(L^1-L^\infty\) estimate gives the weighted Strichartz estimates of \eqref{equ:linear-inhomo} for relatively large time. For the relatively small time, we apply the Morawetz type energy estimate \eqref{equ:Morawetz} to get weighted Strichartz estimates of \eqref{equ:linear-inhomo} under the condition \(\textup{supp}\phi\subseteq\left\{|x|\leq \frac{3t}{4}\right\}\), by the observation that \eqref{equ:linear-inhomo} is essentially an ultra-hyperbolic equation in \(n+2\) (since $\mu=1$) spacetime dimension, we then get the corresponding weighted Strichartz estimates in $\R^{n+2}$, where we can introduce a suitable transform similar to the Lorentz rotation to remove the support restriction for \(\phi\). Backing to the original spacetime \(\R^{n+1}_+\), then the expected Strichartz estimates for relatively small time are established. Collecting these estimates together, the global existence of small data solution follows by a standard Picard iteration. 
	
	The second main result in this article can be stated as follows.
	
	\begin{theorem}\label{YH-1}
		Assume that \(n\geq4\) and \(p_{\crit}(n,1)<p\leq p_{\conf}(n,1)\) and consider the following regular Cauchy problem
		\begin{equation}\label{equ:original-1}
			\left\{ \enspace
			\begin{aligned}
				&\partial_t^2 \phi-\Delta \phi +\f{1}{t}\,\p_t\phi=|\phi|^p,
				\quad (t,x)\in(2,\infty)\times\R^n, \\
				&\phi(2,x)=\varepsilon \phi_0(x), \quad \partial_{t} \phi(2,x)=\varepsilon \phi_1(x).
			\end{aligned}
			\right.
		\end{equation}
		Suppose $\phi_i\in C_c^{\infty}(\R^n)$ ($i=0, 1$),  then for $\ve>0$ small enough the Cauchy problem \eqref{equ:original-1} admits a global weak solution \(\phi\) with
		\begin{equation*}
			\left(1+\big|t^2-|x|^2\big|\right)^{\gamma}
			t^{\frac{1}{p+1}}\phi\in L^{p+1}([2,\infty)\times\R^{n}),
		\end{equation*}
		for some \(\gamma\) satifying
		\[\frac{1}{p(p+1)}<\gamma<\frac{np-(n+2)}{2(p+1)}. \]
	\end{theorem}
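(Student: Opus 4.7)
The plan is to prove Theorem \ref{YH-1} via a standard Picard iteration in the weighted Banach space
\[
X=\Big\{\phi:\big\|(1+|t^2-|x|^2|)^{\gamma}t^{1/(p+1)}\phi\big\|_{L^{p+1}([2,\infty)\times\R^n)}<\infty\Big\},
\]
where $\gamma$ is chosen in the admissible interval $\frac{1}{p(p+1)}<\gamma<\frac{np-(n+2)}{2(p+1)}$, which is nonempty precisely when $p>p_{\crit}(n,1)$. The central analytic task is to establish a weighted Strichartz estimate of the schematic form
\[
\|\phi\|_{X}\leq C\big\|(1+|t^2-|x|^2|)^{-\gamma}t^{-1/(p+1)}F\big\|_{L^{(p+1)/p}([2,\infty)\times\R^n)}
\]
for the linear inhomogeneous problem \eqref{equ:linear-inhomo} with $\mu=1$ and zero data, together with an analogous estimate for the free evolution of the compactly supported initial data. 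Once this is achieved, Hölder's inequality applied to the map $\phi\mapsto|\phi|^p$ allows the iteration to close on a ball of radius of order $\ve$ in $X$ for $\ve$ small.

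To obtain the linear Strichartz estimate I would split spacetime into a large-time region $\{t\geq c(1+|x|)\}$ and a small-time region $\{t< c(1+|x|)\}$. In the large-time region the estimate would follow by interpolation between a weighted $L^2$--$L^2$ bound and a weighted $L^1$--$L^\infty$ bound. The $L^2$--$L^2$ bound is essentially immediate from Theorem \ref{thm:Morawetz}: specialising to $\mu=1$ and any $\gamma>1$ yields control of the weighted $L^2$ norms of $\nabla_{t,x}\phi$, $\phi/r$ and $\phi/t$ in terms of a weighted $L^2$ norm of $F$. The $L^1$--$L^\infty$ bound would be derived by writing $\phi$ explicitly as a Fourier integral operator acting on $F$ and applying stationary phase. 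The new feature specific to $\mu=1$, highlighted in the introduction, is that the amplitude of this FIO acquires a logarithmic singularity as the frequency $|\xi|\to 0$ (cf.\ \eqref{equ:z-small-1}); I would handle this by a dyadic splitting in frequency, treating the piece $|\xi|\geq 1$ by the standard stationary phase method and the piece $|\xi|\leq 1$ by direct bounds that exploit the integrability of the logarithm against a positive power of $|\xi|$.

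For the small-time region the Fourier representation is less effective, and I would instead exploit the algebraic coincidence available at $\mu=1$: the operator $\p_t^2+\frac{1}{t}\p_t$ is exactly the radial part in $\R^2$ of the Laplacian in two auxiliary variables, so \eqref{equ:linear-inhomo} lifts to a genuine inhomogeneous wave equation in $n+2$ spacetime dimensions for functions that are radial in the extra pair. Running the interpolation scheme of the previous paragraph at this higher-dimensional level produces a weighted Strichartz estimate for the original equation under the support restriction $\mathrm{supp}\,\phi\subseteq\{|x|\leq 3t/4\}$. To remove this restriction I would introduce a Lorentz-type rotation in $\R^{n+2}$, which preserves the lifted wave operator and can be chosen to map a generic configuration into the supported region; pulling the resulting estimate back to $\R^{n+1}_+$ yields the desired short-time Strichartz estimate.

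Combining the two regimes and running the contraction argument produces the claimed global weak solution. I expect the main obstacle to be the careful bookkeeping of the low-frequency logarithmic singularity at $\mu=1$: it is precisely this feature that breaks the change-of-variables strategy of \cite{HLYin2D,HSZ} and necessitates the Morawetz-based approach of the present paper, and it must be absorbed into the weighted estimates without degrading the critical threshold $p_{\crit}(n,1)$ or the endpoint $p=p_{\conf}(n,1)$.
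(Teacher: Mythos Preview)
Your proposal is correct and follows essentially the same strategy as the paper: the weighted Strichartz estimate for the inhomogeneous problem is obtained by interpolating the Morawetz $L^2$--$L^2$ bound with an $L^1$--$L^\infty$ dispersive bound coming from the Hankel-function Fourier representation (with dyadic handling of the logarithmic low-frequency singularity at $\mu=1$), the support restriction $|x|\le\tfrac{3t}{4}$ is removed via the ultra-hyperbolic lift to $\R^2_t\times\R^n_x$ together with Lorentz-type rotations, and the global solution is then produced by Picard iteration in the weighted $L^{p+1}$ space. The only organizational discrepancy is that the paper's primary decomposition is in the \emph{source} variable (splitting $F=F^0+F^1$ according to whether $\tau\le T/10$ or $\tau\ge T/10$ on each dyadic slab $T/2\le t\le T$, following Georgiev--Lindblad--Sogge) rather than in the observation point $(t,x)$ as you describe; your interior/boundary split in $(t,x)$ appears only as a secondary step inside the treatment of $\phi^1$, but this does not affect the substance of the argument.
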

	
	This article is organized as follows: We establish the Morawetz energy estimate \eqref{equ:Morawetz} and hence prove Theorem \ref{thm:Morawetz} in Section 2. In Section 3 we obtain the weighted Strichartz estimates for the linear Cauchy problem. Finally some necessary lemmas are provided in the Appendix.
	
	\vspace{4mm}
	\noindent\textbf{Notations~}
	
	Throughout the paper, we use \(C\) to denote an absolute constant that may vary from line to line. The notation
	\(A\lesssim B\) means that \(A\leq CB\) for some absolute constant \(C>0\). 
	
	We denote by \(\delta, \delta_1, \delta_2\)  positive constants which may be made arbitrarily small.
	
	We also fix a smooth cutoff function \(\rho\in C^\infty_c(\R^n)\) satisfying
	\begin{equation}\label{equ:cut-off}
		\textup{supp}\rho\subseteq\left\{\xi\mid\frac{1}{2}\leq|\xi|\leq2\right\},
		\quad  \ds \sum\limits_{j=-\infty}^\infty\rho\left(2^{-j}\xi\right) \equiv1\quad \text{for} \quad \xi\in\R^n,
	\end{equation}
	and set \(\rho_j(\xi)\) as \(\rho(2^{-j}\xi)\).
	
	\section{Proof of Morawetz Estimate}
	
	In this section, we demonstrate the proof of the Morawetz energy estimate \eqref{equ:Morawetz} in Theorem \ref{thm:Morawetz}. The first step is to prove the following lemma.
	\begin{lemma}\label{Mora}
		Let $\phi$ solves the Cauchy problem \eqref{equ:linear-inhomo} with $n\ge 4$ and $t_0\ge 0$, then for $\mu\in (0, 2)$ and $\gamma>1$ it holds
		\begin{align}\label{007}
			& \underset{t_0< t\leq T}{\textup{sup}} t^{\frac{\mu}{2}} \left[  \left\| |u|^{\frac{1}{2}} \nabla_{t,x} \phi \right\| _{L^2 (\mathbb R^n)} +\left \| |u|^{\frac{1}{2}}  \frac{\phi}{r} \right\|  _{L^2 (\mathbb R^n)}+ \left\|  |u|^{\frac{1}{2}} \frac{\phi}{t}   \right\|_{L^2 (\mathbb R^n)} \right] \nonumber\\
			\le &C\left\| t ^{\frac{\mu}{2}} \left(  1+ |u|  \right)^{\frac{\gamma}{2}} \left(  1+ \underline{u}  \right)^{\frac{\gamma}{2}} F \right\|  _{L^2 ([t_0,T] \times \mathbb R^n  )},
		\end{align}
		where the positive constant $C$ depends on $n, \mu$ and $\gamma$.
	\end{lemma}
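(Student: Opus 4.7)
The strategy is inspired by the factorization identity highlighted in the introduction: under the conjugation $\phi\mapsto r^{(n-1)/2}t^{\mu/2}\phi$, the scale-invariant damped wave operator is formally conjugate to the ordinary d'Alembertian in $(n+1+\mu)$ spacetime dimensions acting on spherically symmetric profiles. This suggests multiplying the equation by
\[
M\phi=\left[(\partial_t+\partial_r)+\frac{n-1}{2r}+\frac{\mu}{2t}\right]\phi
=r^{-(n-1)/2}t^{-\mu/2}(\partial_t+\partial_r)\bigl(r^{(n-1)/2}t^{\mu/2}\phi\bigr),
\]
and, to produce the $|u|^{1/2}$-weights on the left of \eqref{007}, by the weighted version $t^{\mu}|u|\,M\phi$, integrating over the slab $[t_0,T]\times\R^n$. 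The $\frac{n-1}{2r}$-piece is the classical Morawetz correction adapted to the spatial radial Laplacian, while the extra $\frac{\mu}{2t}$-piece is the analogous correction for the temporal damping term, as dictated by the $(\mu+1)$-dimensional radial interpretation.

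The resulting multiplier identity splits into three pieces. The time boundary integral at $t=T$ (the corresponding one at $t=t_0$ vanishes by the trivial data) produces, after applying Hardy's inequality in $\R^n$ (available since $n\geq 4$) to trade $\phi/r$ and $\phi/t$ against $\nabla_{t,x}\phi$, the quantity
\[
t^{\mu}\int_{\R^n}(1+|u|)\bigl(|\nabla_{t,x}\phi|^2+|\phi/r|^2+|\phi/t|^2\bigr)\,dx\Big|_{t=T};
\]
taking the supremum in $T$ yields the left-hand side of \eqref{007}. The spacetime bulk integral produces a nonnegative complete-square contribution proportional to $\iint|u|\,|(\partial_t+\partial_r)v|^2\,dx\,dt$ with $v=r^{(n-1)/2}t^{\mu/2}\phi$, coming from the factorization structure of $M$. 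Finally, the commutator errors, in particular the cross term generated by $\partial_t(\frac{\mu}{2t})$, combine into a dissipative contribution $\iint\frac{\mu(2-\mu)}{4t^2}|\phi|^2|u|\,dx\,dt$, which is nonnegative precisely because $\mu\in(0,2)$; this is the algebraic reason for the range restriction.

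The source term is bounded by Cauchy--Schwarz in the form
\[
\iint F\cdot t^{\mu}|u|\,M\phi\,dx\,dt
\leq \bigl\|(1+|u|)^{\gamma/2}(1+\underline{u})^{\gamma/2}t^{\mu/2}F\bigr\|_{L^2}\cdot\left\|\frac{t^{\mu/2}|u|\,M\phi}{(1+|u|)^{\gamma/2}(1+\underline{u})^{\gamma/2}}\right\|_{L^2},
\]
and the second factor is absorbed into the bulk and boundary terms already produced on the left via the fact that $|u|\,(1+|u|)^{-\gamma}(1+\underline{u})^{-\gamma}$ is integrable along each outgoing null ray as soon as $\gamma>1$, allowing a continuity-in-$T$ argument to close the estimate. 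Taking supremum in $T$ then yields \eqref{007}.

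The principal obstacle is the bookkeeping: one must carefully track all the terms produced by integration by parts and confirm that the residual error terms reduce to the nonnegative dissipative contribution identified above. The two structural hypotheses are both essential here. The assumption $\mu\in(0,2)$ is exactly what guarantees the non-negativity of the potential-like contribution $\frac{\mu(2-\mu)}{4t^2}|\phi|^2$, which is responsible for the $\|t^{\mu/2}|u|^{1/2}\phi/t\|_{L^2}$ bound; the assumption $n\geq 4$ provides enough room in Hardy's inequality to absorb all the lower-order spatial contributions generated by commuting the $|u|$-weight with the spatial differential operators, without relying on any angular decomposition.
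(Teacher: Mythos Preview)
Your single-multiplier scheme has a genuine gap. With $v=r^{(n-1)/2}t^{\mu/2}\phi$ the conjugated operator reads $(\partial_t-\partial_r)(\partial_t+\partial_r)v+Qv-r^{-2}\Delta_{S^{n-1}}v$, where $Q=\frac{(n-1)(n-3)}{4r^2}-\frac{\mu(\mu-2)}{4t^2}$. Multiplying by your $|u|\,(\partial_t+\partial_r)v$, the leading piece gives
\[
|u|\,(\partial_t+\partial_r)v\cdot(\partial_t-\partial_r)(\partial_t+\partial_r)v
=(\partial_t-\partial_r)\Bigl[\tfrac{|u|}{2}\bigl((\partial_t+\partial_r)v\bigr)^2\Bigr]-\mathrm{sgn}(t-r)\bigl((\partial_t+\partial_r)v\bigr)^2,
\]
so the bulk term is $-\mathrm{sgn}(t-r)\,|(\partial_t+\partial_r)v|^2$, not the nonnegative $|u|\,|(\partial_t+\partial_r)v|^2$ you claim; inside the cone ($t>r$) it has the wrong sign and cannot be dropped. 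Equally serious, the $t=T$ boundary term is $\tfrac{|u|}{2}|(\partial_t+\partial_r)v|^2$ plus potential pieces, which controls only the \emph{outgoing} null combination $\phi_t+\phi_r+\tfrac{n-1}{2r}\phi+\tfrac{\mu}{2t}\phi$. No Hardy inequality can recover the full $|\nabla_{t,x}\phi|^2$ from a single null combination, so the left-hand side of \eqref{007} is simply not produced.

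The paper repairs both issues by using the \emph{two-piece} multiplier
\[
X=(t+r)\Bigl(\partial_t+\partial_r+\tfrac{n-1}{2r}+\tfrac{\mu}{2t}\Bigr)+|t-r|\Bigl(\partial_t-\partial_r-\tfrac{n-1}{2r}+\tfrac{\mu}{2t}\Bigr).
\]
The $(t+r)$-weighted outgoing and $|t-r|$-weighted incoming parts are chosen so that on $t=T$ one obtains both null combinations, and then the algebraic identity $|M_+\phi|^2+|M_-\phi|^2=2\bigl(|\phi_t+\tfrac{\mu}{2t}\phi|^2+|\phi_r+\tfrac{n-1}{2r}\phi|^2\bigr)$ together with the potential $Q\phi^2$ extracts all four quantities $\phi_t^2,\phi_r^2,(\phi/r)^2,(\phi/t)^2$. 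The crucial sign structure is that after summing the two pieces the residual bulk terms \emph{cancel} for $t\ge r$ and are nonnegative for $t<r$. Here the hypothesis $n\ge4$ enters not through Hardy but through $(n-1)(n-3)>0$, which makes the $r^{-2}$-potential nonnegative; your attribution of $n\ge4$ to Hardy (which already holds for $n\ge3$) misidentifies its role. Finally, the specific pairing of weights---$(t+r)$ with the outgoing and $|t-r|$ with the incoming derivative---is what produces fluxes on the $u=\mathrm{const}$ and $\underline{u}=\mathrm{const}$ cones that exactly match and absorb the right-hand side after Cauchy--Schwarz and the $(1+|u|)^{-\gamma}$ integrability; your single flux cannot close this absorption.
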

	\begin{proof}[Proof of Lemma \ref{Mora}]
		We will use the following multiplier
		\begin{equation}\label{m1}
			X= \left(t+r\right) \left(\partial_t + \partial_r + \frac{n-1}{2r} + \frac{\mu}{2t}\right) + |t-r| \left(\partial_t - \partial_r - \frac{n-1}{2r} + \frac{\mu}{2t}\right)	
		\end{equation}
		to the inhomogeneous wave equation
		\begin{equation*}%\label{m2}
			\Box\phi + \frac{\mu}{t}\partial_t\phi = \partial^2_t\phi - \partial^2_r\phi - \frac{n-1}{r}\partial_r\phi - \frac{1}{r^2} \Delta_{S^{n-1}} \phi  + \frac{\mu}{t}\partial_t\phi = F.
		\end{equation*}
		For the first part corresponding to the multiplier \eqref{m1} we have
		\begin{align*}%\label{m3}
			& (t+r)\left(\partial_t+\partial_r + \frac{n-1}{2r} + \frac{\mu}{2t}\right)\phi \left(\Box\phi + \frac{\mu}{t}\partial_t\phi\right)r^{n-1}t^{\mu} \nonumber\\
			& =(t+r) \left(\partial_t\phi+\partial_r\phi + \frac{n-1}{2r}\phi + \frac{\mu}{2t}\phi\right) r^{\frac{n-1}{2}}t^{\frac{\mu}{2}} \nonumber\\
			& \quad \times\left( \partial^2_t\phi -\partial^2_r\phi - \frac{n-1}{r}\partial_r\phi - \frac{1}{r^2} \Delta_{S^{n-1}} \phi +\frac{\mu}{t} \partial_t \phi\right) r^{\frac{n-1}{2}} t ^{\frac{\mu}{2}} \nonumber\\
			& = (t+r)  \left(\partial_t+\partial_r\right) \left(r^{\frac{n-1}{2}} t ^{\frac{\mu}{2}} \phi   \right)\big[ (\partial_t-\partial_r)(\partial_t+\partial_r) \left(r^{\frac{n-1}{2}} t^{\frac{\mu}{2}} \phi  \right) \nonumber\\
			&  \quad + \frac{(n-1)(n-3)}{4}r^{\frac{n-1}{2}-2} t^{\frac{\mu}{2}}\phi - \frac{\mu}{2}\left(\frac{\mu}{2}-1\right)r^{\frac{n-1}{2}}t^{\frac{\mu}{2}-2}\phi
			- r^{\frac{n-1}{2}-2} t^{\frac{\mu}{2}} \Delta_{S^{n-1}} \phi \big] \nonumber\\
			& = (\partial_t-\partial_r) \left\{ \frac{t+r}{2} \left[ (\partial_t+\partial_r)  \left(r^{\frac{n-1}{2}} t^{\frac{\mu}{2}} \phi \right)  \right]^2   \right\}  \nonumber\\
			&\quad +  (\partial_t+\partial_r) \left\{\frac{t+r}{8}\left[\frac{(n-1)(n-3)}{r^2}-\frac{\mu(\mu-2)}{t^2}\right]\left( r^{\frac{n-1}{2}} t^{\frac{\mu}{2}} \phi  \right)^2\right\} \nonumber\\
			&\quad
			+  \frac{(n-1)(n-3)t}{4r^3} \left( r^{\frac{n-1}{2}} t ^{\frac{\mu}{2}} \phi \right)^2 - \frac{\mu(\mu-2) r}{4t^3} \left( r^{\frac{n-1}{2}} t ^{\frac{\mu}{2}} \phi \right)^2\nonumber\\
			&\quad
			-\nabla_{S^{n-1}}\left[ \frac{(t+r)}{r^{2}}\left(\partial_t + \partial_r \right)\left(r^{\frac{n-1}{2}}t^{\frac{\mu}{2}} \phi\right) \nabla_{S^{n-1}}\left(r^{\frac{n-1}{2}}t^{\frac{\mu}{2}}\phi\right) \right] \nonumber\\
			& \quad +(\partial_t+\partial_r) \left\{ \frac{t+r}{2r^{2}} \left[ \nabla_{S^{n-1}}\left(r^{\frac{n-1}{2}} t^{\frac{\mu}{2}} \phi \right)  \right]^2   \right\} + \frac{t}{r^{3}} \left[ \nabla_{S^{n-1}}\left(r^{\frac{n-1}{2}} t^{\frac{\mu}{2}} \phi \right)  \right]^2,
		\end{align*}
		while for the second part we have
		\begin{align}\label{01}
			& \left| t - r \right|\left(\partial_t-\partial_r-\frac{n-1}{2r} + \frac{\mu}{2t}\right)\phi \left(\Box\phi + \frac{\mu}{t}\partial_t\phi\right)r^{n-1}t^{\mu} \nonumber\\
			& =  \left| t - r \right| \left(  \partial_t\phi-\partial_r\phi-\frac{n-1}{2r}\phi + \frac{\mu}{2t}\phi   \right) r^{\frac{n-1}{2}} t ^{\frac{\mu}{2}}  \nonumber\\
			& \times\left( \partial^2_t\phi -\partial^2_r\phi - \frac{n-1}{r}\partial_r\phi - \frac{1}{r^2} \Delta_{S^{n-1}} \phi +\frac{\mu}{t} \partial_t \phi\right) r^{\frac{n-1}{2}} t ^{\frac{\mu}{2}} \nonumber\\
			& = \left(\partial_t+\partial_r\right)   \left\{  \frac{\left| t - r \right|}{2} \left[ (\partial_t-\partial_r) \left(r^{\frac{n-1}{2}} t^{\frac{\mu}{2}} \phi  \right) \right]^2  \right\} \nonumber\\
			& \quad + \left(\partial_t-\partial_r\right)  \left\{\frac{|t-r|}{8}\left[\frac{(n-1)(n-3)}{r^2}-\frac{\mu(\mu-2)}{t^2}\right]\left( r^{\frac{n-1}{2}} t^{\frac{\mu}{2}} \phi  \right)^2\right\} \nonumber\\
			& \quad - \frac{\mathrm {sgn} (t-r) r + \left| t - r \right|}{r^3} \Big[  \frac{(n-1)(n-3)}{4}  \left( r^{\frac{n-1}{2}} t ^{\frac{\mu}{2}} \phi \right)^2+\left( \nabla_{S^{n-1}} \left( r^{\frac{n-1}{2}} t ^{\frac{\mu}{2}} \phi \right) \right)^2   \Big] \nonumber\\
			& \quad + \frac{ \left[\mathrm {sgn} (t-r) t- \left| t - r \right| \right] \mu (\mu-2) }{4t^3} \left( r^{\frac{n-1}{2}} t ^{\frac{\mu}{2}} \phi \right)^2 \nonumber\\
			& \quad - \nabla_{S^{n-1}}  \left[ \frac{\left| t - r \right|}{r^2} (\partial_t-\partial_r) \left( r^{\frac{n-1}{2}} t ^{\frac{\mu}{2}} \phi \right) \nabla_{S^{n-1}}  \left( r^{\frac{n-1}{2}} t ^{\frac{\mu}{2}} \phi \right)   \right].
		\end{align}

		There are two cases due to the absolute vale sign appearing in \eqref{01}: \\
		(1): If $ t\geq r $, then
		\begin{align*}%\label{case11}
			& \mathrm {sgn} (t-r) r + \left| t - r \right| = r+t-r=t, \nonumber\\
			&  \mathrm {sgn} (t-r) t- \left| t - r \right| = t-t+r=r
		\end{align*}
		and hence
		\begin{align*}%\label{case12}
			& \left( \frac{t}{r^3}-\frac{\mathrm {sgn} (t-r) r + \left| t - r \right|   }{r^3} \right) \Big[   \frac{(n-1)(n-3)}{4}  \left( r^{\frac{n-1}{2}} t ^{\frac{\mu}{2}} \phi \right)^2+\left( \nabla_{S^{n-1}} \left( r^{\frac{n-1}{2}} t ^{\frac{\mu}{2}} \phi \right) \right)^2  \Big] = 0,
		\end{align*}
		\begin{align*}%\label{case13}
			&  - \frac{\mu(\mu-2) r}{4t^3} \left( r^{\frac{n-1}{2}} t ^{\frac{\mu}{2}} \phi \right)^2+\frac{ \left[\mathrm {sgn} (t-r) t - \left| t - r \right| \right] \mu (\mu-2) }{4t^3} \left( r^{\frac{n-1}{2}} t ^{\frac{\mu}{2}} \phi \right)^2  = 0.
		\end{align*}
		(2): If $ t < r $, then
		\begin{align*}%\label{case21}
			& \mathrm {sgn} (t-r) r + \left| t - r \right| = -r+r-t=-t, \nonumber\\
			&  \mathrm {sgn} (t-r) t- \left| t - r \right| = -t - (r-t)=-r
		\end{align*}
		and hence
		\begin{align*}%\label{case22}
			& \left( \frac{t}{r^3}-\frac{\mathrm {sgn} (t-r) r + \left| t - r \right|   }{r^3} \right) \Big[   \frac{(n-1)(n-3)}{4}  \left( r^{\frac{n-1}{2}} t ^{\frac{\mu}{2}} \phi \right)^2+  \left( \nabla_{S^{n-1}} \left( r^{\frac{n-1}{2}} t ^{\frac{\mu}{2}} \phi \right) \right)^2  \Big]  \nonumber\\
			& = \frac{2t}{r^3}  \Big[   \frac{(n-1)(n-3)}{4}  \left( r^{\frac{n-1}{2}} t ^{\frac{\mu}{2}} \phi \right)^2 +  \left( \nabla_{S^{n-1}} \left( r^{\frac{n-1}{2}} t ^{\frac{\mu}{2}} \phi \right) \right)^2  \Big]  \geq 0,
		\end{align*}
		\begin{align*}%\label{case23}
			& - \frac{ \mu (\mu-2) }{4t^3}\left[ r- \left( \mathrm {sgn} (t-r) t - \left| t - r \right| \right) \right] \left( r^{\frac{n-1}{2}} t ^{\frac{\mu}{2}} \phi \right)^2= - \frac{ \mu (\mu-2) }{4t^3} 2r\left( r^{\frac{n-1}{2}} t ^{\frac{\mu}{2}} \phi \right)^2 \geq 0
		\end{align*}
		for $ 0< \mu < 2 $. So we have
		\begin{align*}%\label{02}
			& X\phi \left(\Box\phi + \frac{\mu}{t}\partial_t\phi  \right) r^{n-1} t^{\mu} = X\phi F r^{n-1} t^{\mu} \nonumber\\
			&  \geq  (\partial_t-\partial_r) \left\{ \frac{t+r}{2} \left[ (\partial_t+\partial_r)  \left(r^{\frac{n-1}{2}} t^{\frac{\mu}{2}} \phi \right)  \right]^2   \right\}  \nonumber\\
			&\quad  +  (\partial_t+\partial_r) \left\{\frac{t+r}{8}\left[\frac{(n-1)(n-3)}{r^2}-\frac{\mu(\mu-2)}{t^2}\right]\left( r^{\frac{n-1}{2}} t^{\frac{\mu}{2}} \phi  \right)^2\right\} \nonumber\\
			&\quad
			-\nabla_{S^{n-1}}\left[ \frac{(t+r)}{r^{2}}\left(\partial_t + \partial_r \right)\left(r^{\frac{n-1}{2}}t^{\frac{\mu}{2}} \phi\right) \nabla_{S^{n-1}}\left(r^{\frac{n-1}{2}}t^{\frac{\mu}{2}}\phi\right) \right] \nonumber\\
			&\quad  +(\partial_t+\partial_r) \left\{ \frac{t+r}{2r^{2}} \left[ \nabla_{S^{n-1}}\left(r^{\frac{n-1}{2}} t^{\frac{\mu}{2}} \phi \right)  \right]^2+\frac{\left| t - r \right|}{2} \left[ (\partial_t-\partial_r) \left(r^{\frac{n-1}{2}} t^{\frac{\mu}{2}} \phi  \right) \right]^2   \right\} \nonumber\\
			& \quad + \left(\partial_t-\partial_r\right) \left\{\frac{|t-r|}{8}\left[\frac{(n-1)(n-3)}{r^2}-\frac{\mu(\mu-2)}{t^2}\right]\left( r^{\frac{n-1}{2}} t^{\frac{\mu}{2}} \phi  \right)^2\right\}  \nonumber\\
			&\quad - \nabla_{S^{n-1}}  \Big[ \frac{\left| t - r \right|}{r^2} (\partial_t-\partial_r) \left( r^{\frac{n-1}{2}} t ^{\frac{\mu}{2}} \phi \right)\nabla_{S^{n-1}}  \left( r^{\frac{n-1}{2}} t ^{\frac{\mu}{2}} \phi \right) \Big].
		\end{align*}
		Integrating the above inequality over the time slice $ t=C $, $ u=t-r=C $ and $ \underline{u}= t+r=C $, we get
		\begin{align}\label{03}
			& \underset{t}{\text{sup}} \int_{\mathbb R^3} \Bigg[ \frac{t+r}{2} t^{\mu} \left( \phi_t + \phi_r + \frac{n-1}{2} \frac{\phi}{r} + \frac{\mu}{2} \frac{\phi}{t}  \right)^2+ \frac{\left| t - r \right|}{2} t^{\mu} \left( \phi_t - \phi_r - \frac{n-1}{2} \frac{\phi}{r} + \frac{\mu}{2} \frac{\phi}{t}  \right)^2 \nonumber\\
			&\quad +  \left(\frac{(n-1)(n-3) }{8r^2}-\frac{\mu (\mu-2)}{8t^2}\right) \left(  t+r+ \left| t - r \right|  \right) t^{\mu} \phi^2 + \frac{t+r+\left| t - r \right|}{2r^2} t^{\mu} (\nabla_{S^{n-1}}\phi)^2 \Bigg] \mathrm{d}x \nonumber\\
			&\quad + \underset{u}{\text{sup}} \int_{t-r=u} \int_{S^{n-1}} (t+r) \left( \phi_t + \phi_r + \frac{n-1}{2} \frac{\phi}{r} + \frac{\mu}{2} \frac{\phi}{t}  \right)^2 r^{n-1} t^{\mu} \mathrm{d} \omega \mathrm{d}r \nonumber\\
			&\quad +2 \underset{u}{\text{sup}} \int_{t-r=u} \int_{S^{n-1}} \Bigg\{ \left(\frac{(n-1)(n-3) }{8r^2}-\frac{\mu (\mu-2)}{8t^2}\right) \left| t - r \right|  \left(  r^{\frac{n-1}{2}} t ^{\frac{\mu}{2}} \phi   \right)^2 \nonumber\\
			& \quad+ \frac{\left| t - r \right| }{2r^2} \left[ \nabla_{S^{n-1}} \left(   r^{\frac{n-1}{2}} t ^{\frac{\mu}{2}} \phi  \right)  \right]^2  \Bigg\} \mathrm{d} \omega \mathrm{d}r \nonumber\\
			&\quad + \underset{\underline{u}}{\text{sup}} \int_{t+r=\underline{u}} \int_{S^{n-1}} \left| t - r \right|  \left( \phi_t - \phi_r - \frac{n-1}{2} \frac{\phi}{r} + \frac{\mu}{2} \frac{\phi}{t}  \right)^2 \mathrm{d} \omega \mathrm{d}r \nonumber\\
			& \quad+ 2 \underset{\underline{u}}{\text{sup}} \int_{t+r=\underline{u}} \int_{S^{n-1}}
			\Bigg\{\left(\frac{(n-1)(n-3) }{8r^2}-\frac{\mu (\mu-2)}{8t^2}\right) (t+r) \left(  r^{\frac{n-1}{2}} t ^{\frac{\mu}{2}} \phi   \right)^2 \nonumber\\
			& \quad+\frac{t+r }{2r^2} \left[ \nabla_{S^{n-1}} \left(   r^{\frac{n-1}{2}} t ^{\frac{\mu}{2}} \phi  \right)  \right]^2  \Bigg\}\mathrm{d} \omega \mathrm{d}r \nonumber\\
			& \leq \iiint\Bigg[  (t+r)  \left( \phi_t + \phi_r + \frac{n-1}{2r} \phi + \frac{\mu}{2t} \phi  \right)+ \left| t - r \right| \left( \phi_t - \phi_r - \frac{n-1}{2r} \phi + \frac{\mu}{2t} \phi  \right)\Bigg]F r^{n-1} t^{\mu} \mathrm{d} \omega \mathrm{d}r \mathrm{d}t.
		\end{align}
		It is easy to get
		\begin{align*}%\label{04}
			& \iiint(t+r)  \left( \phi_t + \phi_r + \frac{n-1}{2r} \phi + \frac{\mu}{2t} \phi  \right)  F r^{n-1} t^{\mu} \mathrm{d} \omega \mathrm{d}r \mathrm{d}t \nonumber\\
			& \lesssim  \underset{u}{\text{sup}}  \left(  \int_{\underline{u} = t+r} \int (t+r)  \left( \phi_t + \phi_r + \frac{n-1}{2r} \phi + \frac{\mu}{2t} \phi  \right)^2 r^{n-1} t^{\mu} \mathrm{d} \omega \mathrm{d}r \right) ^{\frac{1}{2}} \nonumber\\
			& \quad  \times \int  \left( \int_{\underline{u} = t+r}  \int (1+ \underline{u}) F^2  t^{\mu} r^{n-1} \mathrm{d}\theta\mathrm{d}\underline{u} \right) ^{\frac{1}{2}} \mathrm{d}u \nonumber\\
			& \lesssim  \underset{u}{\text{sup}}  \left(  \int_{\underline{u} = t+r} \int (t+r)  \left( \phi_t + \phi_r + \frac{n-1}{2r} \phi + \frac{\mu}{2t} \phi  \right)^2 r^{n-1} t^{\mu} \mathrm{d} \omega \mathrm{d}r \right) ^{\frac{1}{2}} \nonumber\\
			& \quad  \times \int  \left( 1+ |u| \right)^{-\frac{\gamma}{2}}\left( 1+ |u| \right)^{\frac{\gamma}{2}} \left(  \int_{\underline{u} = t+r}\int (1+\underline{u}) F^2  t^{\mu} r^{n-1} \mathrm{d}\theta\mathrm{d}\underline{u} \right) ^{\frac{1}{2}} \mathrm{d}u \nonumber\\
			& \lesssim  \underset{u}{\text{sup}}  \left(  \int_{\underline{u} = t+r} \int (t+r)  \left( \phi_t + \phi_r + \frac{n-1}{2r} \phi + \frac{\mu}{2t} \phi  \right)^2 r^{n-1} t^{\mu} \mathrm{d} \omega \mathrm{d}r \right) ^{\frac{1}{2}} \nonumber\\
			& \quad  \times  \left(  \int_{u}  \left(  1+ |u|  \right)^{-\gamma} \mathrm{d} u \right) ^{\frac{1}{2}} \left( \iiint\left(  1+ |u|  \right)^{\gamma} \left(  1+ \underline{u}  \right)^{\gamma} F^2  t^{\mu} r^{n-1} \mathrm{d}\theta\mathrm{d}\underline{u}\mathrm{d}u \right) ^{\frac{1}{2}} \nonumber\\
			& \lesssim  \underset{u}{\text{sup}}  \left(  \int_{\underline{u} = t+r} \int (t+r)  \left( \phi_t + \phi_r + \frac{n-1}{2r} \phi + \frac{\mu}{2t} \phi  \right)^2 r^{n-1} t^{\mu} \mathrm{d} \omega \mathrm{d}r \right) ^{\frac{1}{2}} \nonumber\\
			& \quad  \times C \left( \iiint\left(  1+ |u|  \right)^{\gamma} \left(  1+ \underline{u}  \right)^{\gamma} F^2  t^{\mu} r^{n-1} \mathrm{d}\theta\mathrm{d}\underline{u}\mathrm{d}u \right) ^{\frac{1}{2}}.
		\end{align*}
		In a similar way,
		\begin{align*}%\label{05}
			& \int \int \int  \left| t - r \right|  \left( \phi_t - \phi_r - \frac{n-1}{2r} \phi + \frac{\mu}{2t} \phi  \right)  F r^{n-1} t^{\mu} \mathrm{d} \omega \mathrm{d}r \mathrm{d}t \nonumber\\
			& \lesssim  \underset{\underline{u}}{\text{sup}}  \left(  \int_{u = t-r} \int \left| t - r \right|  \left( \phi_t - \phi_r -\frac{n-1}{2r} \phi + \frac{\mu}{2t} \phi  \right)^2 r^{n-1} t^{\mu} \mathrm{d} \omega \mathrm{d}r \right) ^{\frac{1}{2}} \nonumber\\
			& \quad  \times C \left( \int\int\int \left(  1+ |u|  \right)^{\gamma} \left(  1+ \underline{u}  \right)^{\gamma} F^2  t^{\mu} r^{n-1} \mathrm{d}\theta\mathrm{d}\underline{u}\mathrm{d}u \right) ^{\frac{1}{2}}.
		\end{align*}
		On the other hand, it holds that
		\begin{align}\label{06}
			&\frac{t+r}{2} \left( \phi_t + \phi_r + \frac{n-1}{2} \frac{\phi}{r} + \frac{\mu}{2} \frac{\phi}{t}  \right)^2 t^{\mu}+ \frac{\left| t - r \right|}{2} \left( \phi_t - \phi_r - \frac{n-1}{2} \frac{\phi}{r} + \frac{\mu}{2} \frac{\phi}{t}  \right)^2 t^{\mu} \nonumber\\
			& \quad+  \left(\frac{(n-1)(n-3) }{8r^2}-\frac{\mu (\mu-2)}{8t^2}\right) \left(  t+r+ \left| t - r \right| \right) t^{\mu} \phi^2 \nonumber\\
			& \gtrsim \left| t - r \right| \left[ \left( \phi_t  + \frac{\mu}{2} \frac{\phi}{t} \right)^2 + \left( \phi_r + \frac{n-1}{2} \frac{\phi}{r} \right)^2 \right] t^{\mu} \nonumber+  \left| t - r \right|  \left(\frac{(n-1)(n-3) }{4r^2}- \frac{\mu (\mu-2)}{4t^2}\right)t^{\mu} \phi^2\nonumber\\
			& \gtrsim \left| t - r \right|t^{\mu} \left[ \phi_t^2 + \left( \frac{\phi}{t}  \right)^2 + \phi_r^2 + \left( \frac{\phi}{r}  \right)^2 \right],
		\end{align}
		which leads to for any $T>2  $, by combining \eqref{03}-\eqref{06}
		\begin{align*}%\label{07}
			& \underset{t_0< t\leq T}{\text{sup}} t^{\frac{\mu}{2}} \left[  \left\| \left(  |t-r|  \right)^{\frac{1}{2}} \nabla_{t,x} \phi \right\| _{L^2 (\mathbb R^n)} +\left \| \left(  |t-r|  \right)^{\frac{1}{2}}  \frac{\phi}{r} \right\|  _{L^2 (\mathbb R^n)}+ \left\|  \left(  |t-r|  \right)^{\frac{1}{2}} \frac{\phi}{t}   \right\|_{L^2 (\mathbb R^n)} \right] \nonumber\\
			\lesssim &\left\| t ^{\frac{\mu}{2}} \left(  1+ |u|  \right)^{\frac{\gamma}{2}} \left(  1+ \underline{u}  \right)^{\frac{\gamma}{2}} F\right \|  _{L^2 ([t_0,T] \times \mathbb R^n  )},~~~ \gamma>1,
		\end{align*}
		which proves \eqref{007}.
	\end{proof}
	
	With Lemma \ref{Mora} in hand, in order to prove \eqref{equ:Morawetz} in Theorem \ref{thm:Morawetz}, it suffices to prove 
	\begin{lemma}\label{lem:plus1}
		Consider the Cauchy problem for the linear wave equation \eqref{equ:linear-inhomo} with \(n\geq4\) and \(t_0\geq0\). Then for \(\mu\in(0,2)\) it holds
		\begin{align}\label{08}
			& \underset{t_0< t\leq T}{\textup{sup}} t^{\frac{\mu}{2}} \left[  \left\|  \nabla_{t,x} \phi \right\| _{L^2 (\mathbb R^n)} +\left \| \frac{\phi}{r} \right\|  _{L^2 (\mathbb R^n)}+ \left\|   \frac{\phi}{t}   \right\|_{L^2 (\mathbb R^n)} \right] \nonumber\\
			\lesssim& \left\| \tau ^{\frac{\mu}{2}} \left(  1+ |u|  \right)^{\frac{\gamma}{2}} \left(  1+ \underline{u}  \right)^{\frac{\gamma}{2}} F \right\|  _{L^2 ([t_0,T] \times \mathbb R^n  )},~~~ \gamma>1.
		\end{align}
	\end{lemma}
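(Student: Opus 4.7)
The plan is to prove \eqref{08} directly via a standard weighted energy identity, based on the Liouville-type substitution $v(t,x):=t^{\mu/2}\phi(t,x)$. A direct computation shows that \eqref{equ:linear-inhomo} transforms into
\[
\partial_t^2 v-\Delta v+\frac{\mu(2-\mu)}{4t^2}v=t^{\mu/2}F, \qquad v(t_0,\cdot)=\partial_tv(t_0,\cdot)=0,
\]
and the crucial point, forcing the restriction $\mu\in(0,2)$, is that the effective mass coefficient $\mu(2-\mu)/4$ is \emph{positive}, so the classical energy multiplier $2\partial_tv$ produces a coercive conservation law
\[
\frac{d}{dt}\!\int_{\R^n}\!\Big[|\partial_tv|^2+|\nabla v|^2+\frac{\mu(2-\mu)}{4t^2}v^2\Big]dx + \int_{\R^n}\!\frac{\mu(2-\mu)}{2t^3}v^2\,dx = 2\!\int_{\R^n}\!t^{\mu/2}F\,\partial_tv\,dx,
\]
in which every term on the left is nonnegative. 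Integrating on $[t_0,T^*]$ with vanishing Cauchy data and dropping the nonnegative dissipation then gives
\[
\sup_{t_0<T^*\le T}\!\int_{\R^n}\!\Big[|\partial_tv|^2+|\nabla v|^2+\frac{\mu(2-\mu)}{4(T^*)^2}v^2\Big](T^*,x)\,dx \le 2\!\int_{t_0}^{T}\!\int_{\R^n}t^{\mu/2}|F\,\partial_tv|\,dx\,dt.
\]

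To handle the source on the right, I apply Cauchy-Schwarz in $x$ followed by Cauchy-Schwarz in $t$, using the trivial pointwise bound $1+t\le 1+\underline u$ and the assumption $\gamma>1$ to make $\int_{t_0}^T(1+t)^{-\gamma}dt$ finite:
\begin{align*}
\int_{t_0}^T\!\!\int_{\R^n}t^{\mu/2}|F||\partial_tv|\,dx\,dt
&\le \sup_{t_0<t\le T}\|\partial_tv(t,\cdot)\|_{L^2(\R^n)}\int_{t_0}^T(1+t)^{-\gamma/2}(1+t)^{\gamma/2}\|t^{\mu/2}F(t,\cdot)\|_{L^2(\R^n)}\,dt\\
&\le C_\gamma\sup_{t_0<t\le T}\|\partial_tv(t,\cdot)\|_{L^2(\R^n)}\cdot\big\|(1+|u|)^{\gamma/2}(1+\underline u)^{\gamma/2}t^{\mu/2}F\big\|_{L^2([t_0,T]\times\R^n)}.
\end{align*}
Absorbing one factor of $\sup_t\|\partial_tv\|_{L^2}$ into the left-hand side delivers
\[
\sup_{t_0<t\le T}\!\Big(\|\partial_tv\|_{L^2(\R^n)}+\|\nabla v\|_{L^2(\R^n)}+\|v/t\|_{L^2(\R^n)}\Big)\le C\big\|(1+|u|)^{\gamma/2}(1+\underline u)^{\gamma/2}t^{\mu/2}F\big\|_{L^2([t_0,T]\times\R^n)}.
\]

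Finally I translate back to $\phi$ through $\nabla v=t^{\mu/2}\nabla\phi$, $v/t=t^{\mu/2}\phi/t$, and $\partial_tv=t^{\mu/2}(\partial_t\phi+\tfrac{\mu}{2t}\phi)$, the last identity combined with the $v/t$ bound and the triangle inequality giving $t^{\mu/2}\|\partial_t\phi\|_{L^2}$, while $t^{\mu/2}\|\phi/r\|_{L^2(\R^n)}=\|v/r\|_{L^2(\R^n)}$ is controlled by $\|\nabla v\|_{L^2}$ via Hardy's inequality (valid since $n\ge 4\ge 3$). I expect no substantial obstacle: the key structural observation is the positivity of the effective mass after the Liouville change of variables, which is exactly what the hypothesis $\mu\in(0,2)$ buys; the threshold $\gamma>1$ enters only through the integrability of $(1+t)^{-\gamma}$ in the Cauchy-Schwarz step.
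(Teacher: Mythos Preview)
Your proposal is correct and takes a genuinely different, more elementary route than the paper. The paper reduces \eqref{08} via Hardy and a time-integration bound to separate estimates on $t^{\mu/2}\|\nabla_x\phi\|_{L^2}$ and $t^{\mu/2}\|\partial_t\phi\|_{L^2}$ (Lemmas~\ref{lem:energy} and~\ref{lem:energy-t} in the Appendix), each of which is proved by writing the solution through the Wirth/Hankel-function representation and carrying out a dyadic frequency analysis that must treat $\mu=1$ and $\mu\in(0,1)\cup(1,2)$ separately. You instead apply the Liouville substitution $v=t^{\mu/2}\phi$ once and for all, turning the damping into a \emph{positive} mass $\mu(2-\mu)/(4t^2)$, so that the standard multiplier $2\partial_tv$ already delivers all four pieces of \eqref{08} simultaneously; no Fourier analysis, no Hankel asymptotics, and no case split on $\mu$ are needed, and in fact only $n\ge 3$ is used (for Hardy). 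What the paper's approach buys in return is a collection of frequency-localized $L^2$ and $L^1$--$L^\infty$ bounds on the dyadic pieces $\phi_k$ that are reused later in the Strichartz machinery of Section~3 (cf.\ Lemma~\ref{A2}); your energy argument is tailor-made for \eqref{08} and does not produce those finer byproducts.
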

	\begin{proof}[Proof of Lemma \ref{lem:plus1}]
		By Hardy inequality, we have
		\[\lp\frac{\phi(t)}{r}\rp_{L^2(\mathbb R^n)}\leq C\left\|\nabla_x\phi\right\|_{L^2(\mathbb R^n)}.\]
		Also it holds
		\begin{equation*}
			\begin{split}
				\sup_{t_0\leq t\leq T}
				\lp\frac{\phi(t)}{t}\rp_{L^2(\mathbb R^n)}&\leq \sup_{t_0\leq t\leq T}
				\lp\frac{1}{t}\int_{t_0}^t\partial_\tau\phi(\tau)\md \tau\rp_{L^2(\mathbb R^n)}   \\
				&\leq\sup_{t_0\leq t\leq T}\frac{1}{t}
				\int_{t_0}^t\lp\partial_\tau\phi(\tau)
				\rp_{L^2(\mathbb R^n)}\md \tau \\
				&\leq\sup_{t_0\leq t\leq T}
				\lp\partial_t\phi(t)\rp_{L^2(\mathbb R^n)}. \\
			\end{split}
		\end{equation*}
		Hence \eqref{08} follows immediately from Lemma~\ref{lem:energy} and Lemma~\ref{lem:energy-t}.
	\end{proof}

	\section{Estimates for the linear problem}\label{linear-estimate}
	
	In this section we establish the weighted Strichartz estimates which are necessary for the global existence of the small data solution to the Cauchy problem \eqref{equ:original-1}.
	
	\subsection{Weighted Strichartz estimate for linear homogeneous equation}
	We firstly consider for \(t_0\geq0\) the corresponding Cauchy problem of the linear homogeneous equation
	\begin{equation}\label{equ:homo}
		\left\{ \enspace
		\begin{aligned}
			&\partial_t^2 v-\Delta v +\f{\mu}{t}\,\p_tv=0,
			\quad (t,x)\in(t_0,\infty)\times\R^n, \\
			&u(t_0,x)=\varepsilon v_0(x), \quad \partial_{t} v(t_0,x)=\varepsilon v_1(x).
		\end{aligned}
		\right.
	\end{equation}
	By the work of Wirth \cite[Theorem 2.1]{Wirth-0}, we have
	\[v(t,x)=\Phi_0(t,t_0,D)v_0+\Phi_1(t,t_0,D)v_1,\]
	where the multipliers \(\Phi_j\) are given by
	\begin{equation}\label{equ:Phi-0}
		\Phi_0(t,2,\xi)=\frac{i\pi}{4}|\xi|\frac{t^\nu}{2^{\nu-1}}\left[H_{\nu-1}^-(2|\xi|)H_\nu^+(t|\xi|)
		-H_\nu^+(2|\xi|)H_{\nu-1}^-(t|\xi|)\right],    
	\end{equation}
	and
	\begin{equation}\label{equ:Phi-1}
		\Phi_1(t,2,\xi)=-\frac{i\pi}{4}\frac{t^\nu}{2^\nu}\left[H_\nu^-(2|\xi|)
		H_\nu^+(t|\xi|)-H_\nu^+(2|\xi|)H_\nu^-(t|\xi|)\right], 
	\end{equation}
	where \(H_\nu^\pm(z)\) are the pairs of Hankel function of \(\nu\) order with \(\nu=\frac{1-\mu}{2}\). 
	By \cite[Proposition 3.1]{Wirth-0}, for the functions \(H^\pm_\nu\), there exists a constant \(K>0\) such that
	\begin{equation}\label{equ:z-large-1}
		H_\nu^\pm(z)=e^{\pm iz}a_{\pm}(z), \quad a_{\pm}(z)\in S^{-\frac{1}{2}}([K,+\infty)),
	\end{equation}
	where we denote by \(S^{-\frac{1}{2}}\) the space of classical symbols of order \(-\frac{1}{2}\).
	While for small arguments, \(0<z\leq c<1\), we have
	\begin{equation}\label{equ:z-small-1}
		|H_\nu^\pm(z)|\lesssim
		\left\{ \enspace
		\begin{aligned}
			&z^{-|\nu|}, && \text{if} \quad \nu\neq0 \\
			&-\ln|z|, && \text{if} \quad \nu=0.
		\end{aligned}
		\right.
	\end{equation}
	We conclude from \eqref{equ:z-small-1} that if \(z\rightarrow0\), then \(H_\nu^\pm(z)\) admits an extra singularity of \(-\ln|z|\) provided \(\nu=0\) (or equivalently \(\mu=1\)). Thus the analysis for \(\mu=1\) is the most difficult case. On the other hand, in order to prove Theorem~\ref{YH-1}, it suffices to consider \(\mu=1\) and \(t_0=2\), thus in the remaining part of Section~\ref{linear-estimate}, we will focus on \(\mu=1\) and \(t_0=2\).  The first result is the following lemma.
	
	\begin{lemma}\label{lem:homogeneous}
		If \(n\geq2\), \(q>2\),  \(\gamma<\frac{n}{2}-\frac{n+1}{q}\) and \(v_0, v_1\in C^\infty(\R^n)\) with \(\textup{supp}(v_0,v_1)\subseteq\{x\mid |x|\leq1\}\), then it holds for the Cauchy problem \eqref{equ:homo}
		\begin{equation}\label{equ:homo-estimate}
			\left\|\big(t^2-|x|^2\big)^\gamma t^{\frac{1}{q}}v\right\|_{L^q([2,+\infty)\times\mathbb{R}^n)} \leq \ve C(v_0,v_1).
		\end{equation}
	\end{lemma}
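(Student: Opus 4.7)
\textbf{Proof proposal for Lemma~\ref{lem:homogeneous}.} The plan is to exploit the explicit Fourier-multiplier representation $v=\Phi_0(t,2,D)v_0+\Phi_1(t,2,D)v_1$ from \eqref{equ:Phi-0}--\eqref{equ:Phi-1}, derive a dispersive $L^1$--$L^{\infty}$ pointwise estimate by stationary-phase analysis of the Hankel multipliers, interpolate it with the trivial $L^2$--$L^2$ energy identity to obtain pointwise-in-$t$ $L^q$ decay, and integrate in $t$ against the weight $(t^2-|x|^2)^{\gamma}t^{1/q}$ to conclude \eqref{equ:homo-estimate}. The support hypothesis on $v_0,v_1$ is crucial: combined with finite propagation speed it gives $\mathrm{supp}\,v(t,\cdot)\subseteq\{|x|\le t-1\}$, and on that set the weight satisfies $(t^2-|x|^2)^{\gamma}=(t-|x|)^{\gamma}(t+|x|)^{\gamma}\lesssim t^{\gamma}(t-|x|)^{\gamma}$.

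First I would perform a Littlewood--Paley decomposition $v=\sum_{j}v_{j}$ using the cutoffs $\rho_j$ from \eqref{equ:cut-off}, and analyze each dyadic piece in two regimes: the large-argument regime $t|\xi|\gtrsim 1$ and the small-argument regime $t|\xi|\lesssim 1$. In the large-argument regime, applying \eqref{equ:z-large-1} with $\nu=(1-\mu)/2=0$ (so that the prefactor $t^{\nu}$ is trivial) reduces $\Phi_j$ to a finite sum of oscillatory factors $e^{\pm i(t\pm 2)|\xi|}$ multiplied by classical symbols of order $-1$ in $|\xi|$, one factor of $-1/2$ coming from each Hankel function. Each $v_{j}(t,\cdot)$ then takes the form of a half-wave propagator applied to frequency-localized data, and a standard stationary-phase computation gives the pointwise bound
\[\|v_{j}(t,\cdot)\|_{L^{\infty}(\R^{n})}\lesssim 2^{j(n+1)/2}\,t^{-n/2}\,\|(v_0,v_1)\|_{L^{1}},\]
where the extra $t^{-1/2}$ decay relative to the classical wave equation reflects the effective dimension shift to $n+1$ induced by the damping with $\mu=1$. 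In the small-argument regime only finitely many low dyadic blocks are involved at each $t$, and the logarithmic bound \eqref{equ:z-small-1} is locally integrable over $|\xi|\lesssim t^{-1}$ when $n\ge 2$, so this piece is harmless.

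I would then interpolate the resulting $L^{1}$--$L^{\infty}$ dispersive estimate against the $L^{2}$--$L^{2}$ energy bound $\|v_{j}(t,\cdot)\|_{L^{2}}\lesssim\|(v_0,v_1)\|_{L^{2}}$ via Riesz--Thorin, and sum the dyadic pieces (using that $v_0,v_1\in C_{c}^{\infty}$ yields rapid Sobolev-norm decay in $j$), to produce a pointwise-in-$t$ bound $\|v(t,\cdot)\|_{L^{q}}\lesssim t^{-n(\frac12-\frac1q)}\,\varepsilon\,C(v_0,v_1)$. Combining this with the weight bound above, passing to polar coordinates, and using the support restriction $|x|\le t-1$, the weighted norm in \eqref{equ:homo-estimate} is controlled by an integral of the schematic form
\[\int_{2}^{\infty}t^{1+q\gamma}\int_{0}^{t-1}(t-r)^{q\gamma}\,|v(t,r\omega)|^{q}\,r^{n-1}\,dr\,dt,\]
whose convergence at $t\to\infty$ is exactly equivalent to the hypothesis $\gamma<\frac{n}{2}-\frac{n+1}{q}$ (the endpoint being the scaling-critical one for this effective dimension).

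The main obstacle is the extra logarithmic singularity of $H_{0}^{\pm}(z)$ at $z=0$ that appears specifically for $\mu=1$ (i.e.\ $\nu=0$) and was absent in the analysis of \cite{HSZ}. It must be handled with care at the transition $2^{j}t\sim 1$ where the large- and small-argument asymptotics are glued; any logarithmic losses produced there have to be absorbed by the slack in the strict inequality defining the range of $\gamma$, which leaves an arbitrary margin $\delta>0$ precisely for this purpose.
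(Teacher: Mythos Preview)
Your proposal has a genuine gap at the step where you pass from the interpolated $L^q$ bound to the weighted integral. You obtain $\|v(t,\cdot)\|_{L^q}\lesssim t^{-n(\frac12-\frac1q)}$, which carries no information about how $v$ is distributed across the annulus $\{|x|\le t-1\}$, and then you write the weighted norm as
\[
\int_{2}^{\infty} t^{1+q\gamma}\int_{0}^{t-1}(t-r)^{q\gamma}\,|v(t,r\omega)|^{q}\,r^{n-1}\,dr\,dt.
\]
With only an unweighted $L^q$ bound in hand, the best you can do with the factor $(t-r)^{q\gamma}$ (when $\gamma>0$, which the lemma allows) is replace it by its supremum $t^{q\gamma}$, and the resulting $t$-integral $\int t^{\,1+2q\gamma-qn/2+n}\,dt$ converges only for $\gamma<\tfrac{n}{4}-\tfrac{n+2}{2q}$. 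This is strictly smaller than the stated threshold $\tfrac{n}{2}-\tfrac{n+1}{q}$ for every $q>2$, so your argument does not prove the lemma as stated; your assertion that convergence is ``exactly equivalent'' to $\gamma<\tfrac{n}{2}-\tfrac{n+1}{q}$ is in fact false under the bound you have derived.

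What is missing is decay of $v$ away from the light cone, i.e.\ in the variable $t-|x|$. The paper does not interpolate to an $L^q$ bound at all; instead it proves the \emph{pointwise} estimate
\[
|v(t,x)|\le \varepsilon\,C(v_0,v_1)\,(1+t)^{-n/2}(1+|t-|x||)^{-n/2+\delta},
\]
extracting the second factor from a dyadic kernel bound of the form $(1+2^{j}\,\big||x-y|-t\big|)^{-N}$ (as in \cite[(3.29)]{Gls2}) combined with the support restriction $|y|\le 1$. That extra $(1+|t-|x||)^{-n/2+\delta}$ is precisely what kills the $(t-r)^{q\gamma}$ weight in the $r$-integral and recovers the full range $\gamma<\tfrac{n}{2}-\tfrac{n+1}{q}$. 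Your dispersive--energy interpolation scheme cannot substitute for this, because interpolation in Lebesgue spaces does not see the characteristic weight.
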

	\begin{proof}[Proof of Lemma~\ref{lem:homogeneous}]
		The proof of \eqref{equ:homo-estimate} is based on establishing a suitable pointwise estimate of \(v\). Recall that for the free wave equation
		\(\Box\tilde{v}=0\) with initial data as in Lemma~\ref{lem:homogeneous}, we have
		\begin{equation}\label{equ:free-wave}
			\tilde{v}=O\big(\ve(1+t)^{-\frac{n-1}{2}}(1+|t-|x||)^{-\frac{n-1}{2}}\big),
		\end{equation}
		and since the damping term \(\partial_tv/t\) provides more decay, we have that \eqref{equ:free-wave} holds for the solution \(v\) of \eqref{equ:homo}. Moreover, for any fixed \(T_0\gg1\) and \(t\in[2,T_0]\), the support condition of the initial data implies \(1\leq t-|x|\leq t\leq T_0\) for \((t,x)\in\textup{supp}v\), thus we have
		\begin{equation}\label{equ:homo-finit}
			|v|\leq \ve C(v_0,v_1,T_0)(1+t)^{-\frac{n}{2}}(1+|t-|x||)^{-\frac{n}{2}},\quad \textup{for}\quad 2\leq t\leq T_0.
		\end{equation}
		
		By \eqref{equ:homo-finit}, it suffice to establish the pointwise estimate for \(v\) with \(t\geq T_0\gg1\). \eqref{equ:Phi-0}-\eqref{equ:Phi-1} imply that the properties of \(\Phi_0\) and \(\Phi_1\) are similar, thus it suffices to consider \(\Phi_1(t,t_0,D)v_1\). Furthermore,  \eqref{equ:z-large-1} and \eqref{equ:z-small-1} show that the behavior of \(H_0^+\) and \(H_0^-\) are similar, hence we only need to estimate \(H_0^-(2|\xi|)H_0^+(t|\xi|)\hat{v}_1(t,\xi)\).
		Motivated by \eqref{equ:z-small-1} and \eqref{equ:z-large-1}, we split the space of \(\xi\) into three parts:
		\begin{itemize}
			\item low frequency: \(t|\xi|\leq1\) or \(|\xi|\leq1/t\);
			\item medium frequency: \(2|\xi|\leq1\leq t|\xi|\) or \(1/t\leq|\xi|\leq1/2\);
			\item high frequency: \(2|\xi|\geq1\) or \(|\xi|\geq1/2\).
		\end{itemize}
		
		More specifically, define the Littlewood-Paley decomposition of solution $v$ by
		\[v_j(t,x)=\rho_j(D)H_0^-(2|D|)H_0^+(t|D|)v_1,\]
		with \(\rho_j\) defined in \eqref{equ:cut-off}. Then one has that for low frequency part
		\begin{equation}\label{equ:homo-low}
			\begin{split}
				|v_j| & =\left|\int_{\R^n}e^{ix\xi}\rho_j(\xi) H_0^-(2|\xi|)H_0^+(t|\xi|)\hat{v}_1(t,\xi)\md\xi\right|\\
				&\lesssim|j|\left|\ln(t2^j)|2^{nj}\|v_1(t,\cdot)\right\|_{L^1(\R^n)} \\
				&\lesssim|j|^22^{nj}(t2^j)^{-\frac{n}{2}}
				\left\|v_1(t,\cdot)\right\|_{L^1(\R^n)}\\
				&\lesssim2^{\frac{n-1}{2}j}t^{-\frac{n}{2}}
				\left\|v_1(t,\cdot)\right\|_{L^1(\R^n)}.
			\end{split}
		\end{equation}
		For the medium frequency part by stationary phase method
		\begin{equation}\label{equ:homo-medium}
			\begin{split}
				|v_j| & =\left|\int_{\R^n}e^{ix\xi}\rho_j(\xi) H_0^-(2|\xi|)e^{it|\xi|}a_+(t|\xi|)\hat{v}_1(t,\xi)\md\xi\right| \\
				& \lesssim|j|(t2^j)^{-\frac{1}{2}}
				2^{nj}(1+t2^j)^{-\frac{n-1}{2}}
				\|v_1(t,\cdot)\|_{L^1(\R^n)}\\
				& \lesssim|j|2^{nj}(t2^j)^{-\frac{n}{2}}
				\|v_1(t,\cdot)\|_{L^1(\R^n)}\\
				&\lesssim2^{\frac{n-1}{2}j}
				t^{-\frac{n}{2}}\|v_1(t,\cdot)\|_{L^1(\R^n)}.
			\end{split}
		\end{equation}
		While for the high frequency case
		\begin{equation*}%\label{equ:homo-high}
			\begin{split}
				|v_j| & =\left|\int_{\R^n}e^{ix\xi}\rho_j(\xi) e^{i(t-2)|\xi|}a_-(2|\xi|)a_+(t|\xi|)|\xi|^{-\frac{n}{2}}
				\widehat{|D|^{\frac{n}{2}}v}_1(t,\xi)\md\xi\right| \\
				& \lesssim2^{-\frac{1}{2}j}(t2^j)^{-\frac{1}{2}}  2^{nj}(1+t2^j)^{-\frac{n-1}{2}}
				2^{-\frac{n}{2}j}
				\|v_1(t,\cdot)\|_{\dot{W}^{\frac{n}{2},1}(\R^n)}\\
				& \lesssim2^{-\frac{1}{2}j}t^{-\frac{n}{2}}
				\|v_1(t,\cdot)\|_{W^{\frac{n}{2},1}(\R^n)}.
			\end{split}
		\end{equation*}
		Summing up with respect to \(j\), we get
		\[\left|H_0^-(2|D|)H_0^+(t|D|)v_1\right|\lesssim t^{-\frac{n}{2}}
		\|v_1(t,\cdot)\|_{W^{\frac{n}{2},1}(\R^n)},\]
		and hence
		\begin{equation}\label{equ:homo-infinite}
			|v(t,x)|\leq \ve C(v_0,v_1,T_0)(1+t)^{-\frac{n}{2}}.
		\end{equation}
		
		Comparing \eqref{equ:homo-infinite} with \eqref{equ:homo-finit}, we see that more decay in terms of \(1+|t-|x||\) is necessary. For this end, we estimate \(v(t,x)\) for different scales of \(|t-|x||\), respectively.
		\paragraph{Case I \(\mathbf{|t-|x||\leq10}\)}
		If \(|t-|x||\leq10\), we have by direct computation
		\begin{equation*}%\label{equ:sigma-small}
			|v(t,x)|\leq \ve C(v_0,v_1,T_0)(1+t)^{-\frac{n}{2}}(1+|t-|x||)^{-\frac{n}{2}}.
		\end{equation*}
		\paragraph{Case II \(\mathbf{|t-|x||>10}\)}
		For the case \(|t-|x||>10\), the analysis is a little involved and we divide the proof into three parts according to the scale of the phase variable.
		\paragraph{Case II.1 Low frequency}
		First note that \(\textup{supp}v_1\subseteq B(0,1)\) and \(t\geq2\), thus by the finite speed of propagation, we have
		\begin{equation*}%\label{equ:support}
			|x|\leq t-1, \quad \text{if} \quad (t,x)\in\textup{supp}v,
		\end{equation*}
		hence \(1\leq t-|x|\leq t\), and in the low frequency case we must have \(|t-|x||2^j\leq t2^j\leq1\), then \eqref{equ:homo-low} implies
		\begin{equation*}%\label{equ:homo-low-1}
			\begin{split}
				|v_j| & \lesssim|j|^22^{nj}(t2^j)^{-\frac{n}{2}}
				(|t-|x||2^j)^{-\frac{n}{2}+\delta}\|v_1(t,\cdot)\|_{L^1(\R^n)} \\
				&\lesssim2^{\delta j}(1+t)^{-\frac{n}{2}}
				(1+|t-|x||)^{-\frac{n}{2}+\delta}\|v_1(t,\cdot)\|_{L^1(\R^n)}.
			\end{split}
		\end{equation*}
		\paragraph{Case II.2 Medium frequency}
		For the medium frequency case, we apply \cite[(3.29)]{Gls2} together with \eqref{equ:homo-medium} to imply for any \(N>0\)
		\[|v_j|\leq2^{\frac{n-1}{2}j}t^{-\frac{n}{2}}
		\int_{\R^n}\big(1+2^j\big||x-y|-|t|\big|\big)^{-N}v_1(t,y)\md y,\]
		since \(t\geq T_0\gg1\) and \(|y|\leq1\), we have \(\big||x-y|-|t|\big|\geq\frac{1}{2}|t-|x||\), therefore by choosing \(N=\frac{n}{2}-\delta\) we conclude that
		\begin{equation*}%\label{equ:homo-medium-1}
			\begin{split}
				|v_j|&\leq2^{\frac{n}{2}j}t^{-\frac{n}{2}}
				(2^j|t-|x||)^{-\frac{n}{2}+\delta}\|v_1(t,\cdot)\|_{L^1(\R^n)}\\
				&\lesssim2^{\delta j}(1+t)^{-\frac{n}{2}}
				(1+|t-|x||)^{-\frac{n}{2}+\delta}\|v_1(t,\cdot)\|_{L^1(\R^n)}.
			\end{split}
		\end{equation*}
		\paragraph{Case II.3 High frequency}
		The high frequency part can be handled similarly if one note that for
		\(t\geq T_0\gg1\) and \(|y|\leq1\),
		\[\big|t-2-|x-y|\big|\geq\frac{1}{2}\big|t-|x|\big|.\]
		
		\vspace{4mm}
		
		Collecting all the results above we have for \(\delta>0\) which can be arbitrarily small
		\begin{equation}\label{equ:sigma}
			|v(t,x)|\leq \ve C(v_0,v_1,T_0)(1+t)^{-\frac{n}{2}}(1+|t-|x||)^{-\frac{n}{2}+\delta}.
		\end{equation}
		Then we can compute the integral in the left hand side of \eqref{equ:homo-estimate} by \eqref{equ:sigma} and the polar coordinate transformation to get
		\begin{equation*}%\label{equ:2.23}
			\begin{split}
				&\Big\|\big(t^2-|x|^2\big)^\gamma t^{\frac{1}{q}}v\Big\|_{L^q([2,\infty)\times\R^n)}^q \\
				\le& \big(\ve C(v_0,v_1,T_0)\big)^q\int_{2}^\infty\int_{|x|\leq t-1}
				\bigg\{\Big(t^2-|x|^2\Big)^\gamma(1+t)^{-\frac{n}{2}}
				(1+|t-|x||)^{-\frac{n}{2}+\delta}\bigg\}^q \md x\ t\md t
				\\
				\leq& \big(\ve C(v_0,v_1,T_0)\big)^q\int_{2}^\infty\int_0^{t-1}\Big\{\big(t+r\big)^\gamma
				\big(t-r\big)^\gamma(1+t)^{-\frac{n}{2}}
				(1+|t-r|)^{-\frac{n}{2}+\delta}\Big\}^q r^{n-1}\md r\ t\md t
				\\
				\lesssim &\big(\ve C(v_0,v_1,T_0)\big)^q\int_{2}^\infty t^{q(-\frac{n}{2}+\gamma)}
				\int_0^{t-1}\big(1+|r-t|\big)^{q(\gamma-\frac{n}{2}
					+\delta)}r^{n-1}\md r\ t\md t.
			\end{split}
		\end{equation*}
		Noting that \(\gamma<\frac{n}{2}-\frac{n+1}{q}\), thus
		\[q\left(\gamma-\frac{n}{2}+\delta\right)<q\left(\frac{n}{2}-\frac{n+1}{q}
		-\frac{n}{2}+\delta\right)=-(n+1)+n\delta.\]
		Hence if we choose \(\delta<\frac{1}{n}\), then
		\begin{equation}\label{equ:homo-compu}
			\Big\|\big(t^2-|x|^2\big)^\gamma t^{\frac{1}{q}}v\Big\|_{L^q([2,\infty)\times\R^n)}^q
			\leq\big(\ve C(v_0,v_1,T_0)\big)^q\int_{2}^\infty t^{q(-\frac{n}{2}+\gamma)+n}dt.
		\end{equation}
		Since 
		\[
		q\left(-\frac{n}{2}+\gamma\right)+n<q\left(-\frac{n}{2}+\frac{n}{2}-\frac{n+1}{q}\right)+n=-1,
		\]
		the integral in \eqref{equ:homo-compu} is convergent and Lemma \ref{lem:homogeneous} is proved.
		
	\end{proof}

	\subsection{Weighted Strichartz estimate for linear inhomogeneous equation}
	Next we turn to the weighted Strichartz estimate for linear inhomogeneous equation, our main result in this subsection is the following:
	\begin{theorem}\label{Str}
		Assume that \(n\geq4\) and \(\phi\) solves the Cauchy problem \eqref{equ:linear-inhomo}. Then for the function \(F\) with
		\begin{equation}\label{equ:supp-F}
			\textup{supp}F\subseteq\{(\tau,y)\mid \tau\geq2, |y|\leq \tau-1\},
		\end{equation}
		we have
		\begin{equation}\label{equ:supp-phi}
			\textup{supp}\phi\subseteq\{(t,x)\mid t\geq2, |x|\leq t-1\},
		\end{equation}
		and
		\begin{equation}\label{equ:inhomo-estimate}
			\begin{split}
				&\lp(t^2-|x|^2)^{n(\frac{1}{2}-\frac{1}{q})-\frac{1}{q}-\delta_1}
				t^{\frac{1}{q}}\phi(t,x)\rp_{L^q(\ra)}   \\
				\leq& C(\delta_1,\delta_2)\lp(\tau^2-|y|^2)^{\frac{1}{q}+\delta_2}\tau^{\frac{q-1}{q}}
				F(\tau,y)\rp_{L^{\frac{q}{q-1}}(\ra)},
			\end{split}
		\end{equation}
		where \(2<q\leq\frac{2(n+2)}{n}\).
	\end{theorem}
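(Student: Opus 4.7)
The plan is to prove \eqref{equ:inhomo-estimate} by splitting the Duhamel integration into a large-time regime (roughly $t \ge 2\tau$) and a small-time regime ($\tau \le t \le 2\tau$), following the strategy sketched in the introduction. In both regimes I would start from the Duhamel representation
$\phi(t,\cdot) = \int_{2}^{t} \Phi_{1}(t,\tau,D) F(\tau,\cdot)\,\mathrm{d}\tau$,
where for each fixed $\tau$ the operator $\Phi_{1}(t,\tau,D)$ is the natural generalization of \eqref{equ:Phi-1} obtained by replacing the starting time $2$ by $\tau$, and coincides with the homogeneous solution operator of \eqref{equ:homo} for the data $(0, F(\tau,\cdot))$ imposed at time $\tau$. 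The support statement \eqref{equ:supp-phi} is an immediate consequence of finite speed of propagation and \eqref{equ:supp-F}.

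For the large-time regime I would combine two endpoints and interpolate. The weighted $L^{2}$-$L^{2}$ endpoint is Theorem~\ref{thm:Morawetz} applied with $\mu=1$. The $L^{1}$-$L^{\infty}$ endpoint is obtained by running the pointwise Littlewood--Paley analysis from the proof of Lemma~\ref{lem:homogeneous} on the kernel of $\Phi_{1}(t,\tau,D)$: the low, medium and high frequency pieces are treated respectively via \eqref{equ:z-small-1}, stationary phase, and \eqref{equ:z-large-1}, producing a pointwise bound of the form
$|\Phi_{1}(t,\tau,D)F(\tau,\cdot)(x)| \lesssim \tau^{-1/2}t^{-n/2}(1+|t-|x||)^{-n/2+\delta}\|F(\tau,\cdot)\|_{W^{n/2,1}(\R^{n})}$.
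The extra logarithmic factor produced at low frequency by $\nu=0$ is absorbed into the arbitrarily small $\delta$. Integrating in $\tau$ and complex-interpolating between these two endpoints yields \eqref{equ:inhomo-estimate} in this regime for all $q \in (2, 2(n+2)/n]$.

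For the small-time regime I would first establish the estimate under the artificial cone-support assumption $\textup{supp}\,\phi \subseteq \{|x| \le 3t/4\}$. On this set $|t-|x|| \gtrsim t$, so \eqref{equ:Morawetz} immediately controls $\sup_{t} t^{(1+\mu)/2}\bigl(\|\nabla_{t,x}\phi(t,\cdot)\|_{L^{2}(\R^{n})} + \|\phi(t,\cdot)/r\|_{L^{2}(\R^{n})} + \|\phi(t,\cdot)/t\|_{L^{2}(\R^{n})}\bigr)$, and Sobolev embedding in $x$ followed by integration over thin dyadic time-slabs $\{t \sim 2^{k}\}$ upgrades this to the target weighted spacetime $L^{q}$ bound. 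To remove the cone-support condition, I would use the observation that for $\mu=1$ the operator $\partial_{t}^{2} + (1/t)\partial_{t}$ is precisely the radial Laplacian in two auxiliary variables, so \eqref{equ:linear-inhomo} lifts to a genuine $(n+2)$-dimensional wave equation on $\R^{n+2}$. A Lorentz-type rotation in the lifted coordinates maps an arbitrary source into the cone $\{|x| \le 3t/4\}$, and the restricted estimate already proved in $\R^{n+2}$ then transfers back to an unrestricted one on $\R^{n+1}_{+}$. A smooth partition in $\tau/t$ finally patches the two regimes.

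The main obstacle I expect is the last step: realizing the $(n+2)$-dimensional lifting together with the Lorentz-type rotation so that both the damping structure and the hyperbolic weights $(t^{2}-|x|^{2})^{\cdot}$ are preserved (up to controllable factors) when pulled back to $\R^{n+1}_{+}$. A secondary technical headache is the logarithmic low-frequency singularity of $H_{0}^{\pm}$ specific to $\mu = 1$, which is precisely the mechanism forcing the small losses $\delta_{1}, \delta_{2} > 0$ in \eqref{equ:inhomo-estimate}.
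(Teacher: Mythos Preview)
Your plan matches the paper's architecture closely: the paper also interpolates between the $q=2$ endpoint (coming from the Morawetz estimate, via Lemma~\ref{lem3.3}) and the $q=\frac{2(n+2)}{n}$ endpoint, and for the latter it splits into a ``far'' piece $\phi^{0}$ (your large-time regime) handled by interpolating the Morawetz $L^{2}$ bound against an $L^{1}\to L^{\infty}$ kernel bound, and a ``near'' piece $\phi^{1}$ (your small-time regime) handled first under the cone restriction $|x|\le 3t/4$ and then freed by the $(n+2)$-dimensional ultrahyperbolic lift together with the Lorentz-type rotations you describe. So the skeleton is right, and your identification of the two genuine difficulties (the lift/rotation step and the $\nu=0$ logarithm) is accurate.

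Two concrete issues, however. First, the pointwise bound you claim for a single Duhamel slice, with the extra factor $(1+|t-|x||)^{-n/2+\delta}$, is too strong. That decay in Lemma~\ref{lem:homogeneous} comes from the initial data being supported in a \emph{unit} ball, whereas $F(\tau,\cdot)$ is supported in a ball of radius $\tau-1$; for $y$ near that boundary the relevant phase quantity $|t-\tau-|x-y||$ can vanish even when $t-|x|$ is large. The paper only proves, and only needs, a $\dot{B}^{-n/2}_{\infty,\infty}$ bound of size $(\ln t)^{2}|t-\tau|^{-n/2}\tau$ on each Littlewood--Paley piece (Lemma~\ref{A2}); the characteristic weight $(t^{2}-|x|^{2})^{\cdot}$ on the left of \eqref{equ:inhomo-estimate} is inherited from the Morawetz side through the interpolation, not from the $L^{\infty}$ endpoint.

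Second, in your small-time regime under the cone restriction you invoke only Morawetz plus Sobolev embedding. That puts $\phi$ in $L^{q}$ but leaves $F$ in weighted $L^{2}$, not in $L^{q/(q-1)}$, so it cannot yield \eqref{equ:inhomo-estimate}. The paper's Proposition~\ref{prop3.1} closes this by also proving an $L^{1}\to\dot{B}^{-n/2}_{\infty,\infty}$ bound for $\chi(|x|/t)\phi$ inside the cone (this requires nontrivial Littlewood--Paley bookkeeping to commute the cutoff past the frequency localisation, see \eqref{equ:decomp-estimate}--\eqref{equ:k-large}) and then interpolating it against the Morawetz $\dot{H}^{1}$ bound. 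Only that interpolation produces the correct $L^{q/(q-1)}$ norm on $F$. In short, the Fourier $L^{\infty}$ analysis is needed in \emph{both} regimes, not just the far one.
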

	
	In order to prove \eqref{equ:supp-phi}, recall that Theorem 1-Theorem 3 in \cite{P1} gives
	\[\phi(t,x)=2\int_{2}^{t}\int_0^{t-\tau}w[F](s,x;\tau)
	E(t,0;\tau,s;1,0)\md s\md \tau,\]
	where the kernel \(E\) can be represented by
	\[E(t,x;\tau,y;1,0)=\frac{1}{\sqrt{(t+\tau)^2-s^2}}F\lc\frac{1}{2},
	\frac{1}{2};1;\frac{(t-\tau)^2-s^2}{(t+\tau)^2-s^2}\rc,\]
	with \(F(\alpha,\beta;\gamma;z)\) denoting the Gauss hypergeometric function. Moreover, \(w[F](s,x;\tau)\) is the solution to the parameter-dependent Cauchy problem for the free wave equation
	\begin{equation*}%\label{equ:free-wave-para}
		\begin{cases}
			&\partial_s^2 w-\triangle w=0, \quad (s,x)\in(0,\infty)\times\R^n, \\
			&\phi(0,x)=F(\tau,x),\quad \partial_s\phi(0,x)=0.
		\end{cases}
	\end{equation*}
	By formulae (10) and (11) in \cite{P1} we see that if \((\tau,y)\in\text{supp}F\), then
	\begin{equation}\label{equ:supp-important}
		|x-y|\leq s\leq t-\tau.
	\end{equation}
	And hence \eqref{equ:supp-phi} follows by combining
	\eqref{equ:supp-important} and \eqref{equ:supp-F}.
	
	The remainder of this subsection will be devoted to the proof of \eqref{equ:inhomo-estimate}, which will be obtained by interpolating between two endpoints \(q=2\) and \(q=\frac{2(n+2)}{n}\). We first consider the case \(q=2\), where we need the following lemma:
	
	\begin{lemma}\label{lem3.3}
		Under the assumption of Theorem 3.2, we have for \(t\geq2\)
		\begin{equation}\label{equ:L2-estimate}
			\lp|t-|x||^{-\frac{1}{2}}t^{\frac{1}{2}}
			\phi(t,x)\rp_{L^2(\R^n)}
			\leq C\ln t
			\lp(\tau^2-|y|^2)^{\frac{1}{2}}\tau^{\frac{1}{2}}F(\tau,y)\rp
			_{L^2([2,t)\times\R^n)}.
		\end{equation}
	\end{lemma}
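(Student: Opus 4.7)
The plan is to deduce Lemma~\ref{lem3.3} from the Morawetz-type estimate \eqref{equ:Morawetz} of Theorem~\ref{thm:Morawetz} applied with $\mu=1$, $t_0=2$, combined with an exploitation of the support condition \eqref{equ:supp-phi}. First I would note that by \eqref{equ:supp-phi}, $\phi$ is supported in $\{(t,x):t\geq 2,\ |x|\leq t-1\}$, so $|u|=t-|x|\in[1,t]$ throughout its support and $1+|u|\asymp|u|$. I would then split the spatial $L^2$ norm on the left of \eqref{equ:L2-estimate} according to the ratio $r/|u|$: on the interior region $\{|x|\leq t/2\}$ one has $r=|x|\leq t/2\leq |u|$, so pointwise $|u|^{-1/2}\leq r^{-1}(1+|u|)^{1/2}$, and the interior contribution is already controlled by the Morawetz $\phi/r$ term in \eqref{equ:Morawetz}.

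On the exterior region $\{t/2<|x|\leq t-1\}$, where the Morawetz $\phi/r$ weight is weaker than the target, I would exploit the boundary vanishing $\phi(t,(t-1)\omega)=0$ to write
\[
\phi(t,r\omega)=-\int_r^{t-1}\partial_{r'}\phi(t,r'\omega)\,dr',
\]
and apply Cauchy-Schwarz with the splitting $(t-r')^{-1/2}\cdot(t-r')^{1/2}$, obtaining the pointwise Hardy-type bound $|\phi(t,r\omega)|^2\lesssim(\ln t)\int_r^{t-1}(1+|u'|)|\partial_{r'}\phi|^2\,dr'$. A Fubini interchange in $r,r'$ then reduces the exterior piece to the Morawetz gradient term $t^{1/2}\|(1+|u|)^{1/2}\nabla_{t,x}\phi\|_{L^2(\R^n)}$ of \eqref{equ:Morawetz}, with one factor of $\ln t$ extracted.

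To match the right-hand side weight $(|u|\underline{u})^{1/2}=(\tau^2-|y|^2)^{1/2}$ at the critical $\gamma=1$ in \eqref{equ:Morawetz} (which a priori requires $\gamma>1$), I would revisit the proof of Lemma~\ref{Mora}. The constant $C_\gamma$ there arises from the Cauchy-Schwarz factor $\bigl(\int(1+|u|)^{-\gamma}\,du\bigr)^{1/2}$, which diverges as $\gamma\to 1^+$. However, on $\textup{supp}\,\phi$ one has $|u|\leq t$, so this integral can be truncated and bounded by $\bigl(\int_0^t(1+u)^{-1}\,du\bigr)^{1/2}\lesssim(\ln(1+t))^{1/2}$ at $\gamma=1$, producing a modified Morawetz estimate with the desired weight $(1+|u|)^{1/2}(1+\underline{u})^{1/2}$ on the right and an additional $(\ln t)^{1/2}$ factor on the left.

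The main obstacle will be to ensure that the logarithmic losses from the Hardy step in the exterior region and from the $\gamma=1$ limit of the Morawetz estimate combine to yield a single power of $\ln t$ rather than $(\ln t)^{3/2}$ or $(\ln t)^2$. I expect this to require either a dyadic decomposition of $F$ in the variable $|u|$ that coordinates the two logarithmic sources, or a direct energy estimate with a bespoke multiplier---an adaptation of \eqref{m1} with an extra $|u|^{-1/2}$-type factor---that builds the target weight into the energy identity from the outset, so that only one $\ln t$ loss is incurred.
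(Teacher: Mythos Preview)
The paper omits the proof entirely, pointing to Lemma~3.1 of \cite{LZ0}, so there is no argument here to compare against directly. Your strategy---control the interior $\{|x|\le t/2\}$ by the $\phi/r$ term of \eqref{equ:Morawetz}, handle the exterior by a radial Hardy step from the vanishing at $r=t-1$, and push \eqref{equ:Morawetz} to the endpoint $\gamma=1$ by truncating the Cauchy--Schwarz integral $\int(1+|u|)^{-\gamma}\,du$ to $|u|\le t$---is correct and yields the estimate with $(\ln t)^{3/2}$ in place of $\ln t$, precisely the obstruction you identify.

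Your proposed fixes do not obviously close this gap. The dyadic-in-$|u|$ decomposition of $F$ removes the $(\ln t)^{1/2}$ Morawetz loss on each piece, but the $O(\ln t)$ pieces are not orthogonal on the solution side, and re-summing via the triangle inequality and Cauchy--Schwarz reinstates the $(\ln t)^{1/2}$. A multiplier with a built-in $|u|^{-1/2}$ weight would require redoing the positivity analysis of Lemma~\ref{Mora} from scratch, and the bulk terms generated when $(\partial_t-\partial_r)$ hits that weight do not have an obviously favorable sign. That said, the discrepancy is harmless for the paper: the sole use of \eqref{equ:L2-estimate} is in \eqref{equ:L2-estimate-f}, where the logarithmic factor is absorbed into $\int_2^\infty t^{-1-2\delta_1}(\ln t)^2\,dt<\infty$, and any fixed power of $\ln t$ would serve equally well there. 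To match the stated power you would likely need to follow \cite{LZ0} more closely and work with the characteristic flux terms appearing in \eqref{03}---in particular the control of $\int_{t-r=u}(t+r)\bigl|(\partial_t+\partial_r)(r^{(n-1)/2}t^{1/2}\phi)\bigr|^2\,d\omega\,dr$---integrating along outgoing null lines from $r=0$, rather than passing through the time-slice conclusion of Theorem~\ref{thm:Morawetz}.
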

	The proof of Lemma~\ref{lem3.3} is similar to that of Lemma 3.1 in \cite{LZ0}, thus we omit the details. By \eqref{equ:L2-estimate}, we compute
	\begin{equation}\label{equ:L2-estimate-f}
		\begin{split}
			& \lp(t^2-|x|^2)^{-\frac{1}{2}-\delta_1}
			t^{\frac{1}{2}}\phi(t,x)\rp_{L^2(\ra)} \\
			\leq&\lc\int_{2}^{\infty}\Big|t^{-\frac{1}{2}-\delta_1}
			\lp|t-|x||^{-\frac{1}{2}}t^{\frac{1}{2}}\phi(t,x)\rp_{L^2(\R^n)}
			\Big|^2\md t\rc^{\frac{1}{2}} \\
			\lesssim&\lc\int_{2}^{\infty}\Big|t^{-\frac{1}{2}-\delta_1}
			\ln t\lp(\tau^2-|y|^2)^{\frac{1}{2}}
			\tau^{\frac{1}{2}}F(\tau,y)\rp_{L^2([2,t)\times\R^n)}\Big|^2
			\md t\rc^{\frac{1}{2}} \\
			\leq&\lc\int_{2}^{\infty}t^{-1-2\delta_1}(\ln t)^2\md t
			\rc^{\frac{1}{2}}\lp(\tau^2-|y|^2)^{\frac{1}{2}}
			\tau^{\frac{1}{2}}F(\tau,y)\rp_{L^2([2,\infty)\times\R^n)} \\
			\leq& C(\delta_1,\delta_2)\lp(\tau^2-|y|^2)^{\frac{1}{2}+\delta_2}
			\tau^{\frac{1}{2}+\delta_2}F(\tau,y)\rp_{L^2([2,\infty)\times\R^n)},
		\end{split}
	\end{equation}
	which yields \eqref{equ:inhomo-estimate} for \(q=2\). For the case \(q=\frac{2(n+2)}{n}\), we have the following lemma
	\begin{lemma}\label{Str1}
		Under the hypotheses of Theorem \ref{Str}, 
		\begin{equation*}%\label{equ:Lq-estimate-01}
			\begin{split}
				&\lp(t^2-|x|^2)^{\frac{n}{2(n+2)}-\delta_1}
				t^{\frac{n}{2(n+2)}}\phi(t,x)
				\rp_{L^{\frac{2(n+2)}{n}}(\ra)} \\
				\leq &C(\delta_1,\delta_2)
				\lp(\tau^2-|y|^2)^{\frac{n}{2(n+2)}+\delta_2}
				\tau^{\frac{n+4}{2(n+2)}}F(\tau,y)\rp
				_{L^{\frac{2(n+2)}{n+4}}(\ra)}.
			\end{split}
		\end{equation*}
	\end{lemma}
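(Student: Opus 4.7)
The plan is to combine Duhamel's principle with the pointwise decay already obtained in the proof of Lemma~\ref{lem:homogeneous} and then reduce to a conformally invariant fractional integration inequality, in the spirit of the analogous endpoint argument in \cite{LZ0}.

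First I would use the representation formula from \cite{P1} already invoked for the support property \eqref{equ:supp-phi} to write Duhamel's formula
\begin{equation*}
\phi(t,x) = \int_{2}^{t} v_\tau(t,x)\,\md\tau,
\end{equation*}
where $v_\tau$ solves the homogeneous equation $\partial_t^2 v-\Delta v+t^{-1}\partial_t v=0$ for $t>\tau$ with Cauchy data $v_\tau(\tau,\cdot)=0$ and $\partial_t v_\tau(\tau,\cdot)=F(\tau,\cdot)$. By \eqref{equ:supp-F} together with the finite-propagation chain $|x-y|\leq s\leq t-\tau$ of \eqref{equ:supp-important}, the data of each $v_\tau$ is supported in $\{|y|\leq \tau-1\}$. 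Rescaling this to the normalized support $\{|y|\leq1\}$ of Lemma~\ref{lem:homogeneous} and applying the pointwise bound \eqref{equ:sigma} would produce, after integration against $F(\tau,\cdot)$, a pointwise kernel estimate of the form
\begin{equation*}
|\phi(t,x)| \lesssim \int_{2}^{t}\!\int_{\R^n} \mathcal{K}(t,x;\tau,y)\,|F(\tau,y)|\,\md y\,\md\tau,
\end{equation*}
where $\mathcal{K}$ decays like the free wave kernel in $n+2$ spacetime dimensions, in agreement with the $n+\mu+1=n+2$ reinterpretation of the damped operator.

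Next, at the conformal endpoint $q=\frac{2(n+2)}{n}$, I would show that the desired weighted $L^{q/(q-1)}\!\to\! L^q$ bound for this kernel operator reduces to a Hardy-Littlewood-Sobolev type inequality. The natural device is to pass to hyperbolic coordinates $t=e^{s}\cosh\rho$, $|x|=e^{s}\sinh\rho$ adapted to the weight $t^2-|x|^2=e^{2s}$; in these variables the weight exponents $(t^2-|x|^2)^{n(\frac{1}{2}-\frac{1}{q})-\frac{1}{q}-\delta_1}$ and $(\tau^2-|y|^2)^{\frac{1}{q}+\delta_2}$, together with the measures $t\,\md t\,\md x$ and $\tau\,\md\tau\,\md y$, balance to yield a fractional-integral operator on $\R\times\R_{+}\times S^{n-1}$ whose exponent matches precisely the $(n+2)$-dimensional conformal HLS threshold. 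The margins $\delta_1,\delta_2>0$ provide the slack needed to apply HLS, and the result of the theorem follows upon undoing the change of variables.

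The principal obstacle is twofold. First, the logarithmic singularity of $H_0^\pm(z)$ at $z=0$ recorded in \eqref{equ:z-small-1} for $\mu=1$ produces a $\ln t$ loss analogous to the one in Lemma~\ref{lem3.3}, which must be absorbed by the strictly positive $\delta_1,\delta_2$; this is exactly what forbids one from placing $\delta_1=\delta_2=0$ in the statement. Second, and more delicate, one must track the exact $\tau$-dependence coming from the rescaling of Lemma~\ref{lem:homogeneous} from initial time $2$ to initial time $\tau$, verifying that it produces precisely the stated $\tau^{(n+4)/(2(n+2))}$ factor on $F$; since the conformal endpoint is borderline and HLS does not absorb excess, this bookkeeping has to be exact rather than up to a polynomial loss.
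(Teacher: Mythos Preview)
Your proposal has a genuine gap at its very first substantive step. The pointwise bound \eqref{equ:sigma} does \emph{not} give a pointwise kernel estimate of the form $|\phi(t,x)|\lesssim\iint \mathcal{K}(t,x;\tau,y)|F(\tau,y)|\,\md y\,\md\tau$. What \eqref{equ:sigma} actually says is $|v(t,x)|\le C(v_0,v_1)\,(1+t)^{-n/2}(1+|t-|x||)^{-n/2+\delta}$, where the constant $C(v_0,v_1)$ depends on the data through a Sobolev norm $W^{n/2,1}$ (this is explicit in the high-frequency piece of the proof of Lemma~\ref{lem:homogeneous}). Rescaling so that the data are supported in a unit ball does not eliminate the $n/2$ derivatives; it only hides them in the rescaling factor, and when you undo the rescaling they return as a loss that your HLS step cannot absorb. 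Equivalently, the fundamental solution of the damped wave operator (like that of the free wave operator) is too singular on the light cone to belong to any weak Lebesgue space, so a direct spacetime Hardy--Littlewood--Sobolev argument of the kind you sketch cannot close at the conformal endpoint. Your analogy with \cite{LZ0} is also off: the endpoint argument there is \emph{not} a pure kernel/HLS argument but an interpolation between an $L^2$ Morawetz estimate and an $L^1$--$L^\infty$ dispersive estimate, with HLS applied only in the time variable.

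The paper's proof follows exactly that interpolation template, and the Morawetz estimate of Theorem~\ref{thm:Morawetz} is indispensable as the $L^2$ endpoint---your plan bypasses it entirely. Concretely, the argument localizes to dyadic time slabs $T/2\le t\le T$, splits $F=F^0+F^1$ according to whether $\tau\le T/10$ or not, and for each piece interpolates between \eqref{equ:Morawetz} (the weighted $\dot H^1$ bound) and the $L^1\to\dot B^{-n/2}_{\infty,\infty}$ dispersive estimate of Lemma~\ref{A2}. The real difficulty---and the step where the $(n+2)$-dimensional reinterpretation you allude to is actually used---is that the interpolated bound (Proposition~\ref{prop3.1}) only holds on the set $\{|x|\le 3t/4\}$; extending it to the full cone requires lifting to the ultrahyperbolic equation \eqref{equ:ultrahyperbolic} in $\R^2_t\times\R^n_x$ and applying Lorentz-type rotations $L^j_1,L^j_2$ that preserve both the equation and the weight $t_1^2+t_2^2-|x|^2$, combined with the localization Lemmas~\ref{G1}--\ref{G2}. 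None of this machinery is visible in your outline, and it is precisely what makes the near-lightcone region ($t-|x|$ small relative to $t$) tractable.
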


	In order to derive Lemma \ref{Str1}, we follow the argument in Georgiev, Lindblad and Sogge \cite{Gls1}. Let us see that we can estimate \(\phi\) if the norm is taken over a set where \(T/2\leq t\leq T\) and \(F(t, x)\) vanishes when 
	\(|x|\geq t-1\). To be more specific, if we let \(\phi=\phi^1+\phi^0\), where \(\Box\phi^j=F^j, j=0,1\) with zero data and if \(F^1(t,x)=F(t,x)\), for \(t\geq T/10\), but zero otherwise then we claim that
	\begin{equation}\label{equ:Lq-estimate-0}
		\begin{split}
			&\lp(t^2-|x|^2)^{\frac{n}{2(n+2)}-\delta_1}
			t^{\frac{n}{2(n+2)}}\phi^0\rp_{L^{\frac{2(n+2)}{n}}\left(\{(t,x)\mid
				\frac{T}{2}\leq t\leq T\}\right)} \\
			\leq& C(\delta_1,\delta_2)T^{-\delta_1}\ln T
			\lp(t^2-|x|^2)^{\frac{n}{2(n+2)}+\delta_2}t^{\frac{n+4}{2(n+2)}}F^0\rp
			_{L^{\frac{2(n+2)}{n+4}}(\ra)},
		\end{split}
	\end{equation}
	and
	\begin{equation}\label{equ:Lq-estimate-1}
		\begin{split}
			&\lp(t^2-|x|^2)^{\frac{n}{2(n+2)}-\delta_1}
			t^{\frac{n}{2(n+2)}}\phi^1\rp_{L^{\frac{2(n+2)}{n}}\left(\{(t,x)\mid
				\frac{T}{2}\leq t\leq T\}\right)} \\
			\leq &C(\delta_1,\delta_2)T^{-\frac{\delta_1}{2}}
			\lp(t^2-|x|^2)^{\frac{n}{2(n+2)}+\delta_2}t^{\frac{n+4}{2(n+2)}}F^1\rp
			_{L^{\frac{2(n+2)}{n+4}}(\ra)}.
		\end{split}
	\end{equation}
	Note that \(\phi^j\) and \(F^j\), like \(\phi\) and \(F\), vanish when \(t-|x|\leq1\).
	
	We first prove claim \eqref{equ:Lq-estimate-0}. Note that
	\(\Box\phi^0=F^0\) and \(F^0(t,x)=F(t,x)\) for \(t\leq T/10\) and \(0\) otherwise. Then for \(t\geq T/2\) and \(\tau\leq T/10\) we have \(t-\tau\geq T/4\). By the \(L^1-L^\infty\) estimate \eqref{equ:cutoff-infty-1-1} in Lemma \ref{A2} below we have 
	\begin{equation*}
		\|\phi^0(t,\cdot)\|
		_{\dot{B}^{-\frac{n}{2}}_{\infty,\infty}(\R^n)}\leq C(\ln t)^2\int_{2}^{t}|t-\tau|^{-\frac{n}{2}}
		\tau\|F^0(\tau,\cdot)\|_{L^1(\R^n)}\ \md\tau,
	\end{equation*}
	which implies that
	\begin{equation}\label{equ:infty-large-time}
		\|\phi^0(t,x)\|
		_{L^\infty\left([\frac{T}{2},T],\dot{B}^{-\frac{n}{2}}
			_{\infty,\infty}(\R^n)\right)}\lesssim T^{-\frac{n}{2}}(\ln T)^2
		\left\|\tau F(\tau,y)\right\|_{L^1\left([2,\frac{T}{10}]\times\R^n\right)}.
	\end{equation}
	On the other hand, by \eqref{equ:Morawetz} with $\mu=1$ we have for \(\phi^0\)
	\begin{equation}\label{equ:2-large-time}
		\begin{split}
			&\sup_{2\leq t\leq T}   t^{\frac{1}{2}}\lp(1+|u|)^{\frac{1}{2}}
			\nabla_{x}\phi(t,\cdot)\rp_{L^2(\R^n)}
			\lesssim
			\left\|(1+|u|)^{\frac{1}{2}+\delta}(1+|\underline{u}|)^{\frac{1}{2}+\delta}
			\tau^{\frac{1}{2}}F(\tau,y)\right\|_{L^2([2,T]\times\R^n)}.
		\end{split}
	\end{equation}
	Interpolating between \eqref{equ:infty-large-time} and
	\eqref{equ:2-large-time} yields that
	\begin{equation*}%\label{equ:large-time-1}
		\begin{split}
			&\left\|(1+|u|)^{\frac{n}{2(n+2)}}t^{\frac{n}{2(n+2)}
			}\phi^0(t,x)\right\|
			_{L^\infty\left([\frac{T}{2},T],L^{\frac{2(n+2)}{n}}
				(\R^n)\right)} \\
			\lesssim& T^{-\frac{n}{n+2}}(\ln T)^{\frac{4}{n+2}}
			\left\|(1+|u|)^{\frac{n+2n\delta}{2(n+2)}}
			(1+|\underline{u}|)^{\frac{n+2n\delta}{2(n+2)}}\tau^{\frac{n+4}{2(n+2)}} F(\tau,y)\right\|_{L^{\frac{2(n+2)}{n+4}}\left([2,\frac{T}{10}]\times\R^n\right)}.
		\end{split}
	\end{equation*}
	Multiplying the both sides of the above inequality with \(t^{\frac{n}{2(n+2)}-\delta_1}\), and then taking \(L^{\frac{2(n+2)}{n}}\) norm with respect to \(t\), we get
	\begin{equation*}%\label{equ:large-time}
		\begin{split}
			&\lp(t^2-|x|^2)^{\frac{n}{2(n+2)}-\delta_1}
			t^{\frac{n}{2(n+2)}}\phi^0\rp_{L^{\frac{2(n+2)}{n}}\left(\{(t,x)\mid
				\frac{T}{2}\leq t\leq T\}\right)} \\
			\leq &C(\delta_1,\delta_2)T^{-\delta_1}\ln T
			\lp(t^2-|x|^2)^{\frac{n}{2(n+2)}+\delta_2}
			t^{\frac{n+4}{2(n+2)}}F^0\rp
			_{L^{\frac{2(n+2)}{n+4}}([2,T]\times\R^n)},
		\end{split}
	\end{equation*}
	which implies the claim \eqref{equ:Lq-estimate-0}.
	
	In the next step we establish \eqref{equ:Lq-estimate-1}, the point of the proof for \eqref{equ:Lq-estimate-1} is the following proposition.
	\begin{proposition}\label{prop3.1}
		Under the hypotheses of Theorem 3.2, if moreover \(T\geq4\),
		\begin{equation}\label{equ:Lq-estimate}
			\begin{split}
				&\lp(t^2-|x|^2)^{\frac{n}{2(n+2)}-\delta_1}
				t^{\frac{n}{2(n+2)}}\phi(t,x)\rp_{L^{\frac{2(n+2)}{n}}\left(\{(t,x)\mid
					\frac{T}{2}\leq t\leq T, |x|\leq\frac{3t}{4}\}\right)} \\
				\leq& C(\delta_1,\delta_2)T^{-\delta_1}
				\lp(t^2-|x|^2)^{\frac{n}{2(n+2)}+\delta_2}t^{\frac{n}{2(n+2)}}
				F(\tau,y)\rp_{L^{\frac{2(n+2)}{n+4}}\left([2,T]\times\R^n\right)},
			\end{split}
		\end{equation}
	\end{proposition}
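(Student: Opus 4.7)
The approach mirrors Georgiev--Lindblad--Sogge~\cite{Gls1}, adapted to the \(\mu=1\) setting via the ``ultra-hyperbolic in \(n+2\) dimensions'' viewpoint announced in the introduction. The plan is to lift \(\phi(t,x)\) to \(\Phi(y,x):=\phi(|y|,x)\) on \(\mathbb{R}^2_y\times\mathbb{R}^n_x\); since \(\partial_t^2+t^{-1}\partial_t\) is exactly the radial part of \(\Delta_y\) for \(y\in\mathbb{R}^2\), this identifies \eqref{equ:linear-inhomo} (with \(\mu=1\)) with the ultra-hyperbolic equation \(\Delta_y\Phi-\Delta_x\Phi=F(|y|,x)\) in \(n+2\) total variables. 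The Sobolev exponent in \(n+2\) dimensions is precisely \(\frac{2(n+2)}{n}\), which is exactly the Lebesgue exponent on the left-hand side of \eqref{equ:Lq-estimate}, and the polar Jacobian \(t\,dt\leftrightarrow dy\) converts the extra weight \(t^{n/(2(n+2))}=t^{1/q}\) on that side into the flat measure on \(\mathbb{R}^{n+2}\).

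Next I would apply Theorem~\ref{thm:Morawetz} with \(\mu=1\) to control \(\nabla_{t,x}\phi\) in the weighted \(L^\infty_tL^2_x\) norm. On the region \(|x|\leq 3t/4\) the weight \((1+|u|)^{1/2}\) in \eqref{equ:Morawetz} is comparable to \(t^{1/2}\), and on the slab \(T/2\leq t\leq T\) it is in turn comparable to \(T^{1/2}\), so all the left-hand side weights collapse to fixed powers of \(T\) up to absolute constants. Passing through the lift, the bound on \(\|t^{1/2}\nabla_{t,x}\phi\|_{L^2(\mathrm{slab})}\) becomes a bound on \(\|\nabla_{y,x}\Phi\|_{L^2(\mathbb{R}^{n+2})}\) localized to \(\{T/2\leq|y|\leq T\}\times\mathbb{R}^n\), and the remaining estimates in Lemma~\ref{lem:plus1} (on \(\phi/t\) and \(\phi/r\)) provide the matching zeroth-order \(L^2\) control needed to convert the homogeneous Sobolev embedding into its inhomogeneous counterpart. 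Applying \(H^1(\mathbb{R}^{n+2})\hookrightarrow L^{\frac{2(n+2)}{n}}(\mathbb{R}^{n+2})\) and unwinding the lift then produces the desired weighted \(L^{\frac{2(n+2)}{n}}_{t,x}\)-norm of \(\phi\) on \(\{T/2\leq t\leq T,\ |x|\leq 3t/4\}\).

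The right-hand side of \eqref{equ:Lq-estimate} is reconciled with the right-hand side of \eqref{equ:Morawetz} by taking the free parameter \(\gamma>1\) in Theorem~\ref{thm:Morawetz} arbitrarily close to \(1\), so that \((1+|u|)^{\gamma/2}(1+|\underline{u}|)^{\gamma/2}\sim(\tau^2-|y|^2)^{\gamma/2}\) can be compared to the weight \((\tau^2-|y|^2)^{\frac{n}{2(n+2)}+\delta_2}\) up to a fixed loss, and by a H\"older step in the time variable \(\tau\) to convert the \(L^2_{\tau,y}\) norm produced by Morawetz into the \(L^{\frac{2(n+2)}{n+4}}_{\tau,y}\) norm on the right-hand side of \eqref{equ:Lq-estimate}. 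Since \(F\) is supported in \([2,T]\) in time, this H\"older step costs at most a positive power of \(T\); balancing this cost against the powers of \(T\) accumulated from the Morawetz weights and the Sobolev step then produces the announced factor \(T^{-\delta_1}\).

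The main obstacle will be the bookkeeping: verifying that after Morawetz, Sobolev in \(\mathbb{R}^{n+2}\), and the time-H\"older step the net exponent of \(T\) is strictly negative, and that the small parameters \(\delta_1,\delta_2\) and \(\gamma-1\) can be chosen consistently so the announced gain \(T^{-\delta_1}\) emerges with the correct sign. A secondary subtlety is that the identification \(\partial_t^2+t^{-1}\partial_t=\Delta_y\) is valid only for functions radial in \(y\), so one must check that the lift \(\Phi\) and its derivatives retain this radial structure under all localization and cutoff steps; the cutoff \(|x|\leq 3t/4\) on the left-hand side is harmless in this respect because it only restricts the \(x\)-variable, leaving the radial symmetry in \(y\) intact.
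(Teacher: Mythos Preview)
Your plan has a decisive gap at the very step you flag as ``bookkeeping''. The Morawetz estimate \eqref{equ:Morawetz} produces a right-hand side in \(L^2_{\tau,y}\), and you propose to convert this into \(L^{\frac{2(n+2)}{n+4}}_{\tau,y}\) by H\"older on the support of \(F\). But \(\frac{2(n+2)}{n+4}<2\), so H\"older on a set of measure \(\sim T^{n+1}\) gives the inequality in the \emph{wrong direction}: it bounds \(\|G\|_{L^{\frac{2(n+2)}{n+4}}}\) by a positive power of \(T\) times \(\|G\|_{L^2}\), never the reverse. Hence Morawetz alone, followed by Sobolev embedding in \(\mathbb{R}^{n+2}\), cannot yield a right-hand side in \(L^{\frac{2(n+2)}{n+4}}\); the \(T^{-\delta_1}\) gain you claim will in fact be a loss. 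This is not a bookkeeping issue but a structural one: a single \(L^2\to L^2\) type input cannot produce an \(L^{q'}\to L^q\) Strichartz inequality with \(q'<2<q\).

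The paper's proof supplies the missing ingredient: a dispersive \(L^1\to L^\infty\) (more precisely, \(L^1\to\dot B^{-n/2}_{\infty,\infty}\)) estimate for \(\chi\phi\), obtained from the Hankel-function representation of the propagator and a Littlewood--Paley argument (see \eqref{equ:L-infty-weight} and Lemma~\ref{A2}). Interpolating this with the Morawetz-based \(L^2\to\dot H^1\) bound \eqref{equ:L2-weight} is what produces \eqref{equ:interpolation}, with the correct Lebesgue exponent \(\frac{2(n+2)}{n+4}\) on the source and the \(|t-\tau|^{-\frac{n}{n+2}}\) kernel that ultimately gives the \(T^{-\delta_1}\) gain. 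Incidentally, the ultra-hyperbolic lift you describe is used in the paper, but \emph{after} Proposition~\ref{prop3.1}, to remove the restriction \(|x|\le\frac{3t}{4}\) via Lorentz-type rotations in \(\mathbb{R}^2_t\times\mathbb{R}^n_x\); it plays no role in the proof of the proposition itself.
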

	\begin{proof}[Proof of Proposition~\ref{prop3.1}]
		Define a smooth function \(\chi\left(\frac{|x|}{t}\right)\) with
		\begin{equation*}%\label{equ:chi}
			\chi\equiv\left\{ \enspace
			\begin{aligned}
				&0, && \text{if}\ \ \frac{|x|}{t}\geq\frac{7}{8}, \\
				&1, && \text{if}\ \ \frac{|x|}{t}\leq\frac{3}{4}.
			\end{aligned}
			\right.
		\end{equation*}
		First by \eqref{equ:Morawetz} with $\mu=1$ in Theorem \ref{thm:Morawetz} we have,
		\begin{equation}\label{equ:L2-weight}
			\begin{split}
				t\|(\chi\phi)(t,\cdot)\|_{\dot{H}^1(\R^n)}=&t\|\nabla(\chi\phi)(t,\cdot)\|_{L^2(\R^n)} \\
				\leq& C\left(t^{\frac{1}{2}}\lp\frac{(t-|x|)^{\frac{1}{2}}\phi}{|x|}\rp
				_{L^2(|x|\leq\frac{7t}{8})}+t^{\frac12}\left\|(t-|x|)^{\frac{1}{2}}\nabla\phi\right\| _{L^2(|x|\leq\frac{7t}{8})} \right) \\
				\leq& C \|(\tau+|y|)^{\frac{1}{2}+\delta}(\tau-|y|)^{\frac{1}{2}+\delta}
				\tau^{\frac{1}{2}}F(\tau,y)\|
				_{L^2([2,t)\times\R^n)}.
			\end{split}
		\end{equation}
		Second we can assert that
		\begin{equation}\label{equ:L-infty-weight}
			\|(\chi\phi)(t,\cdot)\|_{\dot{B}^{-\frac{n}{2}}_{\infty,\infty}(\R^n)}
			\leq C(\ln t)^2\int_{2}^{t}|t-\tau|^{-\frac{n}{2}}
			\tau\|F(\tau,\cdot)\|_{L^1(\R^n)}\md\tau.
		\end{equation}
		In order to prove \eqref{equ:L-infty-weight}, we will use the Littlewood-Paley decomposition. By the notation in \eqref{equ:cut-off}, setting \(\phi_j=\rho_j(D)\phi\) and \(F_j=\rho_j(D)F\), then
		\[\phi=\sum_{j=-\infty}^{\infty}\phi_j, \qquad F=\sum_{j=-\infty}^{\infty}F_j.\]
		Notice that
		\begin{equation*}%\label{equ:Besov}
			\|(\chi\phi)(t,\cdot)\|
			_{\dot{B}^{-\frac{n}{2}}_{\infty,\infty}(\R^n)}
			=\sup_{j\in\mathbb{Z}}2^{-\frac{n}{2}j}\|(\chi\phi)_j(t,\cdot)\|_{L^\infty(\R^n)},
		\end{equation*}
		and we compute the main term on the right side as
		\begin{equation}\label{equ:decomp-estimate}
			\begin{split}
				\|(\chi\phi)_j\|_{L^\infty(\R^n)}
				&\leq\sum_{k=-\infty}^{\infty}\|(\chi\phi_k)_j\|_{L^\infty(\R^n)}\leq\sum_{k=-\infty}^{j+5}\|\phi_k\|_{L^\infty(\R^n)}
				+\sum_{k=j+5}^{\infty}\|(\chi\phi_k)_j\|_{L^\infty(\R^n)},
			\end{split}
		\end{equation}
		and we have by \eqref{equ:cutoff-infty-1-1} in Lemma \ref{A2} below
		\begin{equation*}%\label{equ:cutoff-infty-1-3}
			\|\phi_k(t,\cdot)\|_{L^\infty(\R^n)}\leq C2^{\frac{n}{2}k}(\ln t)^2\int_{2}^{t}|t-\tau|^{-\frac{n}{2}}
			\tau\|F_k(\tau,\cdot)\|_{L^1(\R^n)}\ \md\tau.
		\end{equation*}
		Thus
		\begin{equation}\label{equ:k-small}
			\begin{split}
				\sum_{k\leq j+5}    \|\phi_k(t,\cdot)\|_{L^\infty(\R^n)}\lesssim&\sum_{k\leq j+5}2^{\frac{n}{2}k}(\ln t)^2\int_{2}^{t}
				|t-\tau|^{-\frac{n}{2}}\tau
				\|F_k(\tau,\cdot)\|_{L^1(\R^n)}\ \md\tau \\
				\lesssim &2^{\frac{n}{2}j}(\ln t)^2\int_{2}^{t}
				|t-\tau|^{-\frac{n}{2}}\tau
				\|F(\tau,\cdot)\|_{L^1(\R^n)}\ \md\tau.
			\end{split}
		\end{equation}
		For the second term on the right hand side of \eqref{equ:decomp-estimate}, by Bernstein inequality we obtain
		\begin{equation}\label{equ:Bernstein-1}
			\begin{split}
				\sum_{k>j+5}\|(\chi\phi_k)_j\|_{L^\infty(\R^n)} & \lesssim 2^{\frac{n}{2}j}\sum_{k>j+5}
				\|(\chi\phi_k)_j\|_{L^2(\R^n)}. \\
			\end{split}
		\end{equation}
		Then we turn to handle \(\|(\chi\phi_k)_j\|_{L^2(\R^n)}\) for \(k>j+5\). If \(2^k\leq t^{-1}\), then by \eqref{equ:cutoff-2-1-low} in Lemma \ref{A2} below and the Bernstein inequality
		\begin{equation}\label{equ:L2-1}
			\begin{split}
				\|(\chi\phi_k)_j(t,\cdot)\|_{L^2(\R^n)} & \leq \|\phi_k(t,\cdot)\|_{L^2(\R^n)}\\
				& \lesssim  t^{-1}(\ln t)^2\int_{2}^{t}\tau\lp
				F_k(\tau,\cdot)\rp_{\dot{H}^{-1}(\R^n)}\md\tau \\
				& \lesssim t^{-1}(\ln t)^2\int_{2}^{t}\tau 2^{\frac{n-2}{2}k}
				\lp F_k(\tau,\cdot)\rp_{L^1(\R^n)}\md\tau.
			\end{split}
		\end{equation}
		On the other hand, if \(2^k>t^{-1}\), then in a similar way we have by \eqref{equ:cutoff-2-1-high} in Lemma \ref{A2} below
		\begin{equation}\label{equ:L2-2}
			\begin{split}
				\|(\chi\phi_k)_j(t,\cdot)\|_{L^2(\R^n)}\leq&\sum_{|l-k|<10}\|\chi_l\|_{L^\infty}
				\|\phi_k(t,\cdot)\|_{L^2(\R^n)} \\
				\lesssim& t^{-N}2^{-kN}\|D^N\chi_l\|_{L^\infty}
				t^{-\frac{1}{2}}2^{\frac{1}{2}k}\ln t\int_2^t\tau\lp F_k(\tau,\cdot)\rp_{\dot{H}^{-1}(\R^n)}\md\tau\\
				\lesssim &t^{-N}2^{-kN}t^{-\frac{1}{2}}2^{\frac{1}{2}k}
				2^{\frac{n-2}{2}k}\ln t\int_2^t\tau\lp F_k(\tau,\cdot)\rp_{L^1(\R^n)}\md\tau. \\
			\end{split}
		\end{equation}
		Combining \eqref{equ:Bernstein-1}-\eqref{equ:L2-2}, we arrive at
		\begin{equation}\label{equ:k-large}
			\begin{split}
				\sum_{k>j+5}\|(\chi\phi_k)_j\|_{L^\infty(\R^n)}\lesssim&2^{\frac{n}{2}j}\sum_{2^k\leq t^{-1}}t^{-1}(\ln t)^2\int_{2}^{t}\tau 2^{\frac{n-2}{2}k}
				\lp F_k(\tau,\cdot)\rp_{L^1(\R^n)}\md\tau \\
				& +2^{\frac{n}{2}j}\sum_{2^k>t^{-1}}t^{-\frac{1}{2}}t^{-N}
				2^{(\frac{n-1}{2}-N)k}(\ln t)^2\int_2^t\tau\lp F_k(\tau,\cdot)\rp_{L^1(\R^n)}\md\tau \\
				\lesssim &2^{\frac{n}{2}j}t^{-\frac{n}{2}}(\ln t)^2\int_2^t\tau\lp F(\tau,\cdot)\rp_{L^1(\R^n)}\md\tau \\
				\lesssim &2^{\frac{n}{2}j}(\ln t)^2\int_2^t|t-\tau|^{-\frac{n}{2}}\tau
				\lp F(\tau,\cdot)\rp_{L^1(\R^n)}\md\tau.
			\end{split}
		\end{equation}
		Collecting \eqref{equ:k-small} and \eqref{equ:k-large}, the claim in \eqref{equ:L-infty-weight} is verified. After that, interpolating between \eqref{equ:L2-weight} and \eqref{equ:L-infty-weight} we obtain
		\begin{equation}\label{equ:interpolation}
			\begin{split}
				&\lp (t-|x|)^{\frac{n}{2(n+2)}}
				t^{\frac{n}{2(n+2)}}\phi\rp_{L^{\frac{2(n+2)}{n}}
					(\{x\mid|x|\leq\frac{3t}{4}\})}
				\leq\lp t^{\frac{n}{n+2}}
				\chi\phi\rp_{L^{\frac{2(n+2)}{n}}(\R^n)} \\
				\leq &C
				\lp|t-\tau|^{-\frac{n}{n+2}}(\tau^2-|y|^2)^{\frac{n}{2(n+2)}+\frac{2}{n+2}\delta}
				\tau^{\frac{n+4}{2(n+2)}}F\rp_{L^{\frac{2(n+2)}{n+4}}(\rb)}.
			\end{split}
		\end{equation}
		
		For the remaining part of the proof, we show how to establish \eqref{equ:Lq-estimate} by applying \eqref{equ:interpolation}. First noting that \eqref{equ:supp-important} implies
		\[t-\tau\geq|x-y|\geq|x|-|y|,\]
		thus if \(\tau-|y|\geq T/16\), and \(\tau\geq T/4\), then the weights on both sides of \eqref{equ:Lq-estimate} are comparable and hence can be removed and hence it suffices to prove
		\begin{equation}\label{equ:Lq-estimate-2}
			\begin{split}
				&\lp
				t^{\frac{n}{2(n+2)}}\phi(t,x)\rp_{L^{\frac{2(n+2)}{n}}\left(\{(t,x)\mid
					\frac{T}{2}\leq t\leq T, |x|\leq\frac{3t}{4}\}\right)}
				\leq C(\delta_1,\delta_2)T^{\delta_1+2\delta_2}
				\lp t^{\frac{n+4}{2(n+2)}}
				F(\tau,y)\rp_{L^{\frac{2(n+2)}{n+4}}([2,T]\times\R^n)},
			\end{split}
		\end{equation}
		but \eqref{equ:Lq-estimate-2} will follow from the Strichartz estimate without characteristic weight, see Lemma~\ref{lem:time-weighted-Strichartz}.
		If \(\tau\leq T/4\), then
		\[t-\tau\geq \frac{T}{2}-\frac{T}{4}\geq\frac{T}{4},\]
		hence \eqref{equ:interpolation} gives
		\begin{align*}\label{equ:T}
			& \lp (t-|x|)^{\frac{n}{2(n+2)}}
			t^{\frac{n}{2(n+2)}}\phi\rp_{L^{\frac{2(n+2)}{n}}
				\left(\{x\mid|x|\leq\frac{3t}{4}\}\right)}
			\lesssim T^{-\frac{n}{n+2}}\lp(\tau^2-|y|^2)^{\frac{n}{2(n+2)}+\frac{2}{n+2}\delta}
			\tau^{\frac{n+4}{2(n+2)}}F\rp_{L^{\frac{2(n+2)}{n+4}}(\rb)}.
		\end{align*}
		Then by a computation similar to \eqref{equ:L2-estimate-f}, \eqref{equ:Lq-estimate} follows from \eqref{equ:interpolation} if we choose \(\delta\) small enough. Thus we are left with the case \(\tau\geq T/4, \tau-|y|\leq T/16\). In this case, noting that \(t-\tau\geq|x-y|\geq|x|-|y|\), then by (3.29) in \cite{LZ} we see that \(t-\tau\geq T/32\), hence \eqref{equ:Lq-estimate} follows again from \eqref{equ:interpolation} if we choose \(\delta\) small enough.
		
	\end{proof}

	Let us see how can we derive claim \eqref{equ:Lq-estimate-1} from Proposition~\ref{prop3.1}. Specifically, we note that \eqref{equ:Lq-estimate-1} follows from the further localized bounds
	\begin{equation}\label{equ:Lq-estimate-k}
		\begin{split}
			&\lp(t^2-|x|^2)^{\frac{n}{2(n+2)}-\frac{\delta_1}{2}}
			t^{\frac{n}{2(n+2)}}\phi^1\rp_{L^{\frac{2(n+2)}{n}}
				\left(\{(t,x)\mid\frac{T}{2}\leq t\leq T,2^{k-1}\leq t-|x|\leq2^k\}\right)} \\
			\leq& C(\delta_1,\delta_2)(2^kT)^{-\delta_1}
			\lp(t^2-|x|^2)^{\frac{n}{2(n+2)}+\delta_2}
			t^{\frac{n+4}{2(n+2)}}F^1\rp_{L^{\frac{2(n+2)}{n+4}}(\ra)}.
		\end{split}
	\end{equation}
	Clearly in what follows we may assume that \(2^k\leq 2T\), since
	otherwise the condition in the left will not be satisfied. Also, if we set \(T_k=T/2^k\) and let \(\phi^1_k(t,x)=\phi^1(2^kt,2^kx)\) and \(F_k^1(t,x)=2^{2k}F^1(2^kt,2^kx)\), then \eqref{equ:Lq-estimate-k} can be derived from
	\begin{equation}\label{equ:Lq-estimate-k-1}
		\begin{split}
			&\lp(t^2-|x|^2)^{\frac{n}{2(n+2)}-\delta_1}
			t^{\frac{n}{2(n+2)}}\phi^1_k\rp_{L^{\frac{2(n+2)}{n}}
				\left(\{(t,x)\mid\frac{T_k}{2}\leq t\leq T_k,\frac{1}{2}\leq t-|x|\leq1\}\right)} \\
			\leq& C(\delta_1,\delta_2)T_k^{-\delta_1}
			\lp(t^2-|x|^2)^{\frac{n}{2(n+2)}+\delta_2}
			t^{\frac{n+4}{2(n+2)}}F^1_k\rp_{L^{\frac{2(n+2)}{n+4}}([2^{1-k},\infty)\times\R^n)}.
		\end{split}
	\end{equation}
	
	To use all of this we shall need the following two lemmas, which are essentially from Georgiev-Lindblad-Sogge \cite{Gls1}.
	\begin{lemma}\label{G1}
		Let \(E_+\) be the forward fundamental solution for \(\Box+\partial_t/t\). If \(0\leq t-|x|\leq1\), \(t/10\leq \tau\leq t\), and \(s-1\leq|y|\leq \tau\), then
		\[\left|\frac{x}{|x|}-\frac{y}{|y|}\right|\leq\frac{C}{t},
		\quad \text{if} \quad (t,x,s,y)\in\textup{supp}E_+(t-s,x-y),\]
		for some uniform constant C.
	\end{lemma}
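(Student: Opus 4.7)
My plan is to reduce the claim to an elementary geometric pinching: the causality support property of $E_+$ forces the three points $0$, $x$, $y$ to be quantitatively almost collinear, and the law of cosines then converts this into an angular bound. Throughout I treat the $\tau$ appearing in the hypotheses as identical to $s$, since otherwise the combination ``$s-1\le|y|\le\tau$'' mixes two parameters that are unrelated elsewhere; this looks like a typographical identification.

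\noindent\textbf{Step 1 (pinch the radial gap).} By finite speed of propagation for the strictly hyperbolic operator $\partial_t^2-\Delta+t^{-1}\partial_t$, the support of $E_+$ is contained in the forward solid light cone, so the hypothesis forces $|x-y|\le t-s$. Combining this with the reverse triangle inequality and the bounds $|x|\ge t-1$, $|y|\le s$ from the hypotheses yields
\[
(t-s)-1\;\le\;|x|-|y|\;\le\;|x-y|\;\le\;t-s,
\]
so both $|x-y|$ and $|x|-|y|$ lie within distance $1$ of the ``expected'' value $t-s$.

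\noindent\textbf{Step 2 (law of cosines).} Let $\theta\in[0,\pi]$ denote the angle between $x$ and $y$, so that $x\cdot y=|x||y|\cos\theta$. The identity
\[
|x-y|^2=(|x|-|y|)^2+2|x||y|(1-\cos\theta)
\]
together with Step~1 gives
\[
2|x||y|(1-\cos\theta)\;=\;|x-y|^2-(|x|-|y|)^2\;\le\;(t-s)^2-\bigl((t-s)-1\bigr)^2\;=\;2(t-s)-1\;\lesssim\;t.
\]
The hypotheses $|x|\ge t-1$ and $|y|\ge s-1\ge t/10-1$ yield $|x||y|\gtrsim t^2$ for $t$ beyond a fixed absolute constant (the small-$t$ regime being trivial), whence $1-\cos\theta\lesssim t^{-1}$. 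Since $\bigl|x/|x|-y/|y|\bigr|^2=2(1-\cos\theta)$, the stated estimate follows.

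\noindent The only nontrivial input is the solid-cone support statement for $E_+$. It rests on the classical energy-method proof of finite speed of propagation for $\partial_t^2-\Delta+t^{-1}\partial_t$; the damping term $t^{-1}\partial_t$ is lower order and does not affect that argument. The lemma therefore reduces entirely to the trigonometric pinching above, and I anticipate no serious obstacle.
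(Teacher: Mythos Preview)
Your approach is correct and is exactly the one the paper intends: it omits the proof and cites Lemma~2.2 of Georgiev--Lindblad--Sogge, which is proved by precisely this law-of-cosines pinching after the support bound $|x-y|\le t-s$. One point to flag: your computation gives $1-\cos\theta\lesssim t^{-1}$ and hence $\bigl|x/|x|-y/|y|\bigr|\lesssim t^{-1/2}$, not the $C/t$ printed in the lemma; the $C/t$ is a typo in the paper (the GLS bound is $C/\sqrt t$, and the paper's sole use of the lemma is to localize to angular caps of radius $C/\sqrt{T_k}$), so what you have actually proved is the intended statement.
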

	
	\begin{lemma}\label{G2}
		Suppose that \(K(x,y)\) is a measurable function on \(\R^m\times\R^n\) and set
		\[Tf(x)=\int K(x,y)f(y)\md y.\]
		Suppose further that we can write \(\R^m\) and \(\R^n\) as disjoint unions \(\R^m=\cup_{j\in\mathbb{Z}^d}A_j\) and
		\(\R^n=\cup_{k\in\mathbb{Z}^k}A_j\), where if \(x\in A_j\), then \(K(x,y)=0\) when \(y\in B_k\) with \(|j-k|>C\), for some uniform constant \(C\). Then, if we let \(T_{jk}\) denote the integral with kernel \(K_{jk}\), where \(K_{jk}(x,y)=K(x,y)\) if \((x,y)\in A_j\times B_k\) and zero otherwise,
		\[\|T\|_{L^p\rightarrow L^p}\leq(2C+1)^d
		\cdot\sup_{j,k}\|T\|_{L^p\rightarrow L^p},\]
		provided that \(1\leq p\leq q\leq\infty\).
	\end{lemma}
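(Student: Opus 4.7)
The plan is to prove this via a standard almost-orthogonality / Schur argument exploiting the finite-overlap structure of the decomposition.

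First I would split $Tf = \sum_{k} T(f\mathbf{1}_{B_k})$ and invoke the support hypothesis on $K$: for $x \in A_j$ only the indices with $|j-k| \leq C$ contribute, so
\[
\mathbf{1}_{A_j}(x)\, Tf(x) = \sum_{k:\,|j-k|\leq C} T_{jk}f(x),
\]
a sum of at most $(2C+1)^d$ nonzero terms.

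Next, writing $M = \sup_{j,k}\|T_{jk}\|_{L^p \to L^q}$ and applying Hölder's inequality to this short sum, I would obtain
\[
\|Tf\|_{L^q(A_j)}^q \leq (2C+1)^{d(q-1)} M^q \sum_{k:\,|j-k|\leq C} \|f\|_{L^p(B_k)}^q.
\]
Summing over $j$, and observing that the relation $|j-k|\leq C$ is symmetric so each fixed $k$ contributes to at most $(2C+1)^d$ values of $j$, I pick up an extra factor $(2C+1)^d$ and reduce the double sum to $\sum_k \|f\|_{L^p(B_k)}^q$.

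At this point the hypothesis $p \leq q$ enters decisively: by the $\ell^p \hookrightarrow \ell^q$ embedding applied to the sequence $\{\|f\|_{L^p(B_k)}\}_k$, one has $\sum_k \|f\|_{L^p(B_k)}^q \leq \|f\|_{L^p}^q$. Taking $q$-th roots and collecting the two combinatorial factors $(2C+1)^{d(q-1)}\cdot(2C+1)^d = (2C+1)^{dq}$ yields the claimed constant $(2C+1)^d$. The endpoint $q = \infty$ is handled in parallel by replacing the Hölder step with a pointwise supremum, and $p=1$ needs no special care. There is no serious analytic obstacle here; the only delicate point is bookkeeping the direction of the $\ell^p\hookrightarrow\ell^q$ embedding, which is precisely why the hypothesis $p\leq q$ is essential, and checking that both factors of $(2C+1)^d$ enter symmetrically (one from Hölder across the at most $(2C+1)^d$ columns, and one from each column being hit by at most $(2C+1)^d$ rows).
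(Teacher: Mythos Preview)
Your argument is correct and is the standard almost-orthogonality/Schur-type proof of this lemma; the paper itself does not give a proof but simply cites \cite{Gls1}, Lemma~2.3, so there is nothing further to compare.
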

	Noting that \eqref{equ:supp-important} implies
	\[t-\tau\geq|x-y|\geq|x|-|y|,\]
	then the proof of Lemma \ref{G1} is similar to that of Lemma 2.2 in \cite{Gls1}, while Lemma \ref{G2} is exactly Lemma 2.3 in \cite{Gls1}, thus we omit the details.
	
	With the two lemmas above we are ready to obtain \eqref{equ:Lq-estimate-k} from \eqref{equ:Lq-estimate}. We first notice that it is enough to prove the variant of \eqref{equ:Lq-estimate-k} where in the left we also assume that \(\big|\frac{x}{|x|}-\nu\big|\leq\frac{C}{\sqrt{T_k}}\) for some \(\nu\in\mathbb{S}^{n-1}\). Next, we let \(\omega=\frac{(t,x)}{\sqrt{t^2-|x|^2}}\) denote the projection of \((t,x)\) onto the unit hyperboloid \(\mathbb{H}^n\). We realize that if \((t_j,x_j)\), \(j=1,2\) are two points in the set with
	\begin{equation}\label{equ:small set}
		\frac{T_k}{2}\leq t\leq T_k, \quad \frac{1}{2}\leq t-|x|\leq1, \quad \left|\frac{x}{|x|}-\nu\right|\leq\frac{C}{\sqrt{T_k}},
	\end{equation}
	then we must have \(\textup{dist}(\omega_1,\omega_2)\leq C_0\), for some uniform constant with``dist" denoting the distance on \(\mathbb{H}^n\) with respect to the restriction of the Lorentz metric \(dx^2-dt^2\) to the hyperboloid. Hence, we can introduce transforms similar to Lorentz rotation, and such transforms send this set to the``middle" of the light cone. More specifically, one note that the equation
	\begin{equation*}
		\partial_t^2 \phi-\triangle \phi+\frac{1}{t}\partial_t\phi=F(t,x), \quad (t,x)\in[2,\infty)\times\R^n
	\end{equation*}
	is equivalent to the ultra-hyperbolic equation in \(\R^2\times\R^n\) with radial symmetric assumption in \(\R^2\):
	\begin{equation}\label{equ:ultrahyperbolic}
		\partial_{t_1}^2\phi+\partial_{t_2}^2\phi-\Delta_x\phi=F(t_1,t_2,x),
	\end{equation}
	where \(\phi(t_1,t_2,x)=\phi(t,x)\), \(F(t_1,t_2,x)=F(t,x)\) with \(|t|=\sqrt{t_1^2+t_2^2}\). Then \eqref{equ:Lq-estimate} is equivalent to the following inequalities in \(\R^2\times\R^n\)
	\begin{equation}\label{equ:Lq-estimate-ultrahyper}
		\begin{split}
			&\lp(t_1^2+t_2^2-|x|^2)^{\frac{n}{2(n+2)}-\delta_1}
			\phi(t_1,t_2,x)\rp_{L^{\frac{2(n+2)}{n}}\left(\{(t_1,t_2,x)\mid
				\frac{T}{2}\leq \sqrt{t_1^2+t_2^2}\leq T, |x|\leq\frac{3\sqrt{t_1^2+t_2^2}}{4}\}\right)} \\
			\leq &C(\delta_1,\delta_2)T^{-\delta_1}
			\lp(\tau_1^2+\tau_2^2-|x|^2)^{\frac{n}{2(n+2)}+\delta_2}
			F(\tau_1,\tau_2,y)\rp_{L^{\frac{2(n+2)}{n+4}}
				\left(\{(\tau_1,\tau_2,y)\mid2\leq\sqrt{\tau_1^2+\tau_2^2}\leq T, |y|\in\R^n\}\right)}.
		\end{split}
	\end{equation}
	Introducing the transforms \(L_1^j, L_2^j, j=1,2,\cdots, n\) as follows
	\begin{equation*}%\label{equ:transform}
		L_1^j:\left\{ \enspace
		\begin{aligned}
			&t_1, \\
			&t_2, \\
			&x_1, \\
			&\cdots \\
			&x_j, \\
			&\cdots \\
			&x_n,
		\end{aligned}
		\right.
		\longrightarrow
		\left\{ \enspace
		\begin{aligned}
			&\tilde{t}_1=\gamma(t_1-vx_j), \\
			&\tilde{t}_2=t_2, \\
			&\tilde{x}_1=x_1, \\
			&\cdots \\
			&\tilde{x}_j=\gamma(x_j-vt_1), \\
			&\cdots \\
			&\tilde{x}_n=x_n,
		\end{aligned}
		\right.
		\quad
		L_2^j:\left\{ \enspace
		\begin{aligned}
			&t_1, \\
			&t_2, \\
			&x_1, \\
			&\cdots \\
			&x_j, \\
			&\cdots \\
			&x_n,
		\end{aligned}
		\right.
		\longrightarrow
		\left\{ \enspace
		\begin{aligned}
			&\tilde{t}_1=t_1, \\
			&\tilde{t}_2=\gamma(t_2-vx_j), \\
			&\tilde{x}_1=x_1, \\
			&\cdots \\
			&\tilde{x}_j=\gamma(x_j-vt_2), \\
			&\cdots \\
			&\tilde{x}_n=x_n,
		\end{aligned}
		\right.
	\end{equation*}
	where \(v\in(0,1)\) is a suitable constant and \(\gamma=\frac{1}{\sqrt{1-v^2}}\).
	Direct computation shows that \(L_1^j, L_2^j, j=1,2,\cdots, n\) keeps the equation
	\eqref{equ:ultrahyperbolic} and the weight \(t_1^2+t_2^2-|x|^2\) invariant, and \(L_1^j, L_2^j, j=1,2,\cdots, n\)  would send the set in \eqref{equ:small set} into the new one
	\[
	\left\{(t_1,t_2,x)\Big| |x|\leq\frac{3\sqrt{t_1^2+t_2^2}}{4}\right\}.
	\]
	This observation together with \eqref{equ:Lq-estimate-ultrahyper} yield
	\begin{equation}\label{equ:Lq-estimate-ultrahyper-k}
		\begin{split}
			&\lp(t_1^2+t_2^2-|x|^2)^{\frac{n}{2(n+2)}-\delta_1}
			\phi^1_k\rp_{L^{\frac{2(n+2)}{n}}
				\left(\{(t_1,t_2,x)\mid\frac{T_k}{2}\leq \sqrt{t_1^2+t_2^2}\leq T_k,\frac{1}{2}\leq \sqrt{t_1^2+t_2^2}-|x|\leq1\}\right)} \\
			\leq &C(\delta_1,\delta_2)
			\lp(\tau_1^2+\tau_2^2-|y|^2)^{\frac{n}{2(n+2)}+\delta_2}
			F^1_k\rp_{L^{\frac{2(n+2)}{n+4}}
				\left(\{(\tau_1,\tau_2,y)\mid2^{1-k}\leq\sqrt{\tau_1^2+\tau_2^2}\leq T_k, |y|\in\R^n\}\right)},
		\end{split}
	\end{equation}
	and \eqref{equ:Lq-estimate-ultrahyper-k} is equivalent to \eqref{equ:Lq-estimate-k-1}. Finally, by taking \(T=2^j,j=2,3,\cdots,k\) in Proposition~\ref{prop3.1} and summing up, Lemma \ref{Str1} follows.
	\appendix
	
	\section*{Acknowledgement}
	
	The authors would like to express the sincere thank to Professor Huicheng Yin and Professor Yi Zhou for many helpful discussions.
	
	Daoyin He was supported by the Jiangsu Provincial Scientific Research Center of Applied Mathematics (No. BK20233002 and 2242023R40009). Ning-An Lai was partially supported by NSFC (No. 12271487, W2521007 and 12171097). 
	
	\section{Appendix}
	To begin with, we give the energy estimates which are necessary for the proof of Theorem~\ref{thm:Morawetz}.
	\begin{lemma}\label{lem:energy}
		Consider the solution of \eqref{equ:linear-inhomo} with \(n\geq2\) and \(t_0\geq0\). Then for \(\gamma>1\) and \(\mu\in(0,2)\) we have the following energy estimates
		\begin{equation}\label{equ:energy}
			\underset{t_0< t\leq T}{\textup{sup}} t^{\frac{\mu}{2}} \left[  \left\|  \nabla_{x} \phi \right\| _{L^2 (\mathbb R^n)}\right]\leq C
			\left \| \tau ^{\frac{\mu}{2}} \left(  1+ |u|  \right)^{\frac{\gamma}{2}} \left(  1+ \underline{u}  \right)^{\frac{\gamma}{2}} F \right\|  _{L^2 ([t_0,T] \times \mathbb R^n  )},
		\end{equation}
		where the positive constant \(C\) depends on \(n\), \(\mu\) and \(\gamma\).
		\iffalse
		and
		\begin{equation}\label{equ:energy-1}
			\underset{t_0< t\leq T}{\textup{sup}} t^{\frac{1}{2}} \left[  \left\|  \nabla_{t,x} \phi \right\| _{L^2 (\mathbb R^n)}\right]\lesssim
			\big(\ln(2+T)\big)^2 \| \tau ^{\frac{1}{2}} \left(  1+ |u|  \right)^{\frac{\gamma}{2}} \left(  1+ \underline{u}  \right)^{\frac{\gamma}{2}} F \|  _{L^2 ([2,T] \times \mathbb R^n  )},\quad \mu=1.
		\end{equation}
		\fi
	\end{lemma}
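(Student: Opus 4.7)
The plan is to prove \eqref{equ:energy} by a Liouville transformation followed by a standard weighted energy estimate for the resulting wave equation with potential. First I would set $\tilde{\phi}(t,x) = t^{\mu/2}\phi(t,x)$; a direct computation shows that $\tilde{\phi}$ satisfies
\begin{equation*}
\partial_t^2 \tilde{\phi} - \Delta \tilde{\phi} + V(t)\tilde{\phi} = t^{\mu/2} F, \qquad V(t) := \frac{\mu(2-\mu)}{4 t^2},
\end{equation*}
with zero Cauchy data at $t = t_0$. Since $\mu \in (0,2)$, one has $V(t) > 0$ and $V'(t) < 0$, so the Liouville reduction has converted the damping term $\frac{\mu}{t}\partial_t \phi$, which carries no definite sign, into a potential of favorable sign.

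Second, I would multiply the equation for $\tilde{\phi}$ by $\partial_t \tilde{\phi}$, integrate over $\R^n$, and work with the augmented energy
\begin{equation*}
\tilde{E}(t) := \frac{1}{2}\int_{\R^n}\Big(|\partial_t \tilde{\phi}|^2 + |\nabla_x \tilde{\phi}|^2 + V(t)|\tilde{\phi}|^2\Big)\md x.
\end{equation*}
The resulting identity is $\tilde{E}'(t) = \frac{V'(t)}{2}\|\tilde{\phi}\|_{L^2}^2 + \int_{\R^n} t^{\mu/2} F\,\partial_t\tilde{\phi}\,\md x$; since $V' < 0$, dropping that term gives $\tilde{E}'(t) \le \sqrt{2\tilde{E}(t)}\,\|t^{\mu/2}F(t)\|_{L^2}$. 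Integrating the resulting differential inequality on $[t_0, t]$, together with $\tilde{E}(t_0) = 0$ and the lower bound $\tilde{E}(t) \ge \frac{t^\mu}{2}\|\nabla_x\phi(t)\|_{L^2}^2$ (which uses $V\geq 0$), produces
\begin{equation*}
t^{\mu/2}\|\nabla_x \phi(t)\|_{L^2(\R^n)} \le \int_{t_0}^t \tau^{\mu/2}\|F(\tau)\|_{L^2(\R^n)}\md\tau.
\end{equation*}

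Third, I would convert the right-hand side into the weighted $L^2_{\tau,y}$ norm appearing in \eqref{equ:energy}. The key pointwise observation is that $\underline u = \tau + |y| \ge \tau$, which gives $\sup_{y \in \R^n} (1+|u|)^{-\gamma/2}(1+\underline u)^{-\gamma/2} \lesssim (1+\tau)^{-\gamma/2}$, hence
\begin{equation*}
\tau^{\mu/2}\|F(\tau)\|_{L^2(\R^n)} \lesssim (1+\tau)^{-\gamma/2}\,\big\|\tau^{\mu/2}(1+|u|)^{\gamma/2}(1+\underline u)^{\gamma/2}F(\tau)\big\|_{L^2(\R^n)}.
\end{equation*}
Cauchy-Schwarz in $\tau$ together with $\int_0^\infty (1+\tau)^{-\gamma}\md \tau = \frac{1}{\gamma-1} < \infty$ for $\gamma > 1$ then yields \eqref{equ:energy} after taking $\sup_{t \in (t_0,T]}$, with a constant depending only on $\gamma$. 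The main obstacle is conceptual rather than technical: a direct weighted energy argument applied to $\phi$ produces an indefinite cross term $\frac{\mu}{2}t^{\mu-1}(\|\nabla_x\phi\|_{L^2}^2 - \|\partial_t\phi\|_{L^2}^2)$ whose treatment via Gronwall produces a constant growing like $t^{\mu/2}$ and spoils the uniform-in-$t$ bound; the Liouville reduction is precisely what removes this obstruction and exploits the hypothesis $\mu \in (0,2)$.
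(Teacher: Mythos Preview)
Your argument is correct and considerably more elementary than the paper's own proof. The paper establishes the intermediate bound
\[
t^{\mu/2}\|\nabla_x\phi(t,\cdot)\|_{L^2(\R^n)}\lesssim\int_{t_0}^t\tau^{\mu/2}\|F(\tau,\cdot)\|_{L^2(\R^n)}\,\md\tau
\]
by writing the solution explicitly through Wirth's representation $\hat\phi(t,\xi)=\int_{t_0}^t\Phi_1(t,\tau,\xi)\hat F(\tau,\xi)\,\md\tau$, performing a Littlewood--Paley decomposition $\phi=\sum_k\phi_k$, and then estimating each piece using the small- and large-argument asymptotics of the Hankel functions $H_\nu^\pm$ (with a separate treatment of $\mu=1$, where $\nu=0$ introduces a logarithmic singularity). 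Only after summing in $k$ and applying the same Cauchy--Schwarz step in $\tau$ that you use does the paper arrive at \eqref{equ:energy}. Your Liouville substitution $\tilde\phi=t^{\mu/2}\phi$ bypasses all of this: the sign conditions $V>0$ and $V'<0$ for $\mu\in(0,2)$ make the augmented energy monotone up to the forcing, and the bound follows in a few lines with no frequency decomposition and no special-function asymptotics. What the paper's route buys is the frequency-localized estimates $\|\phi_k\|_{L^2}\lesssim t^{-\mu/2}\int\tau^{\mu/2}\|F_k\|_{\dot H^{-1}}\,\md\tau$, which are reused later (e.g.\ in Lemma~\ref{A2} and in the proof of Proposition~\ref{prop3.1}); your method gives the lemma itself more cleanly but does not produce these dyadic byproducts.
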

	\begin{proof}
		By \cite[(56)]{Wirth-0}, we have
		\begin{equation*}%\label{equ:phi}
			\hat{\phi}(t,\xi)=\int_{2}^{t}\Phi_1(t,\tau,\xi)\hat{F}(\tau,\xi)\md \xi,
		\end{equation*}
		where
		\[\Phi_1(t,\tau,\xi)=-\frac{i\pi}{4}\frac{t^\nu}{\tau^{\nu-1}}
		\left[H_\nu^-(\tau|\xi|)H_\nu^+(t|\xi|)
		-H_\nu^+(\tau|\xi|)H_\nu^-(t|\xi|)\right],\]
		where $H_{\nu}^{\pm}$ are defined as in \eqref{equ:z-large-1}.
		Since the property of \(H_\nu^+\) and \(H_\nu^-\) are similar, by neglecting the constants one may write
		\begin{equation}\label{equ:phi-1}
			\phi(t,x)=\int_{t_0}^{t}\int_{\R^n}\int_{\R^n}e^{i(x-y)\cdot\xi}
			\tau H_\nu^-\left(\tau|\xi|)H_\nu^+(t|\xi|\right)F(\tau,y)\md y\md\xi\md\tau.
		\end{equation}
		
		We estimate \(\|  \nabla_x \phi \| _{L^2 (\mathbb R^n)}\) by dyadic decomposition. Note that \eqref{equ:phi-1} implies
		\begin{equation*}%\label{equ:phi-1}
			\phi(t,x)=\int_{t_0}^{t}\int_{\R^n}\int_{\R^n}e^{i(x-y)\cdot\xi}
			\tau H_\nu^-(\tau|\xi|)H_\nu^+(t|\xi|)F(\tau,y)\md y\md\xi\md\tau.
		\end{equation*}
		Hence for \(\phi_k=\rho_k(D)\phi\) and \(F_k=\rho_k(D)F\)
		\begin{equation}\label{equ:phi-2}
			\phi_k(t,x)=\int_{t_0}^{t}\int_{\R^n}\int_{\R^n}e^{i(x-y)\cdot\xi}
			\tau H_\nu^-(\tau|\xi|)H_\nu^+(t|\xi|)F_k(\tau,y)\md y\md\xi\md\tau.
		\end{equation}
		Next we estimate \(\phi_k\) according to the value of \(\mu\). 
		
		\noindent\textbf{Case i \(\mathbf{\mu=1}\)} We have \(\nu=\frac{1-\mu}{2}=0\). By \eqref{equ:z-small-1}, if \(2^k\leq t^{-1}\), then
		\begin{equation}\label{equ:low-2-d}
			\begin{split}
				\lp \phi_k(t,\cdot)\rp_{L^{2}(\R^n)}\leq&
				\int_{t_0}^{t}\tau\lp\big|\ln(\tau|\xi|)\ln(t|\xi|)\big|
				\hat{F}_k(\tau,\xi)\rp_{L^{2}(\R^n)}\md\tau \\
				\leq &t^{-\frac{1}{2}}\int_{t_0}^{t}\tau^{\frac{1}{2}}\lp\big|\ln(\tau|\xi|)(\tau|\xi|)^{\frac{1}{2}}
				\ln(t|\xi|)(t|\xi|)^{\frac{1}{2}}\big|
				\hat{F}_k(\tau,\xi)\rp_{\dot{H}^{-1}(\R^n)}\md\tau \\
				\leq& t^{-\frac{1}{2}}
				\int_{t_0}^{t}\tau^{\frac{1}{2}}\lp
				F_k(\tau,\cdot)\rp_{\dot{H}^{-1}(\R^n)}\md\tau.
			\end{split}
		\end{equation}
		If \(2^k>t^{-1}\), with out loss of generality we make an extension of \(F\) and assume that \(F(\tau,y)\equiv0\) if \(\tau<t_0\) then
		\begin{equation}\label{equ:high-2-d}
			\begin{split}
				\lp\phi_k(t,\cdot)\rp_{L^2(\R^n)}\lesssim& \int_{0}^{2^{-k}}\tau\lp\ln(\tau|\xi|)
				(t|\xi|)^{-\frac{1}{2}}|\hat{F}_k|\rp_{L^2(\R^n)}\md\tau +\int_{2^{-k}}^t\tau\lp(\tau|\xi|)^{-\frac{1}{2}}
				(t|\xi|)^{-\frac{1}{2}}|\hat{F}_k|\rp_{L^2(\R^n)}\md\tau \\
				\lesssim &t^{-\frac{1}{2}}
				\int_{0}^{2^{-k}}\tau^{\frac{1}{2}}
				\lp\ln(\tau|\xi|)(\tau|\xi|)^{\frac{1}{2}} F_k(\tau,\cdot)\rp_{\dot{H}^{-1}(\R^n)}\md\tau
				+t^{-\frac{1}{2}}\int_{2^{-k}}^t\tau^{\frac{1}{2}}\lp F_k(\tau,\cdot)\rp_{\dot{H}^{-1}(\R^n)}\md\tau \\
				\lesssim& t^{-\frac{1}{2}}\int_{t_0}^t
				\tau^{\frac{1}{2}}\lp F_k(\tau,\cdot)\rp_{\dot{H}^{-1}(\R^n)}\md\tau \\
				%& = t^{-\frac{1}{2}}2^{\frac{1}{2}k}
				%\int_2^t\tau\lp F_k(\tau,\cdot)\rp_{\dot{H}^{-1}(\R^n)}\md\tau.
			\end{split}
		\end{equation}
		Collecting \eqref{equ:low-2-d} and \eqref{equ:high-2-d} we get
		\begin{equation}\label{equ:2-d}
			\begin{split}
				\|t^{\frac{1}{2}}  \nabla_x \phi_k \| _{L^2 (\mathbb R^n)}
				&\lesssim2^kt^{\frac{1}{2}}\lp\phi_k(t,\cdot)\rp_{L^2(\R^n)}\lesssim \int_{t_0}^t\tau^{\frac{1}{2}}\lp F_k(\tau,\cdot)\rp_{L^2(\R^n)}\md\tau. 
			\end{split}
		\end{equation}
		Summing up on both sides of \eqref{equ:2-d}, we get for \(\mu=1\)
		\begin{equation}\label{equ:2-d-1}
			\sup_{t_0<t\leq T}t^{\frac{1}{2}}\|\nabla_x\phi\|_{L^2(\mathbb R^n)}
			\lesssim \int_{t_0}^t\tau^{\frac{1}{2}}\lp F(\tau,\cdot)\rp_{L^2(\R^n)}\md\tau
			\lesssim \left\|\tau ^{\frac{1}{2}}\left(1+|u|  \right)^{\frac{\gamma}{2}}\left(1+\underline{u}  \right)^{\frac{\gamma}{2}}F\right\|_{L^2 ([t_0,T] \times \mathbb R^n)}.
		\end{equation}

		\noindent\textbf{Case ii \(\mathbf{\mu\in(0,1)\cup(1,2)}\)} In this case we have \(\nu=\frac{1-\mu}{2}\neq0\). If \(2^k\leq t^{-1}\), then
		\begin{equation*}%\label{equ:2-d-2}
			\begin{split}
				\lp\phi_k(t,\cdot)\rp_{L^2(\R^n)}\lesssim&
				\int_{t_0}^{t}t^{\frac{1-\mu}{2}}\tau^{\frac{1+\mu}{2}}
				\lp(\tau|\xi|)^{-\frac{|1-\mu|}{2}}(t|\xi|)^{-\frac{|1-\mu|}{2}}
				\hat{F}_k(\tau,\xi)\rp_{L^{2}(\R^n)}\md\tau \\
				\lesssim&\left\{ \enspace
				\begin{aligned}
					&\int_{t_0}^{t}\tau^\mu2^{\mu k}\|F_k(\tau,\cdot)\|_{\dot{H}^{-1}(\R^n)}\ \md\tau\\
					&\lesssim t^{-\frac{\mu}{2}}\int_{t_0}^t
					\tau^{\frac{\mu}{2}}\lp F_k(\tau,\cdot)\rp_{\dot{H}^{-1}(\R^n)}\md\tau, \qquad \text{if}\ \ 0<\mu<1; \\
					&t^{-\frac{\mu}{2}}\int_{t_0}^t(2^kt)^{1-\frac{\mu}{2}}(2^k\tau)^{1-\frac{\mu}{2}}
					\tau^{\frac{\mu}{2}}\lp F_k(\tau,\cdot)\rp_{\dot{H}^{-1}(\R^n)}\md\tau\\
					&\lesssim t^{-\frac{\mu}{2}}\int_{t_0}^t
					\tau^{\frac{\mu}{2}}\lp F_k(\tau,\cdot)\rp_{\dot{H}^{-1}(\R^n)}\md\tau, \quad\text{if}\ \ 1<\mu<2.
				\end{aligned}
				\right. \\
				%  &
			\end{split}
		\end{equation*}
		If \(2^k>t^{-1}\), then by making an extension of \(F\) such that \(F(\tau,y)\equiv0\) for \(\tau<t_0\)
		\begin{equation*}%\label{equ:2-d-21}
			\begin{split}
				\lp\phi_k(t,\cdot)\rp_{L^2(\R^n)}\lesssim& \int_{0}^{2^{-k}}t^{\frac{1-\mu}{2}}\tau^{\frac{1+\mu}{2}}
				\lp(\tau|\xi|)^{-\frac{|1-\mu|}{2}}(t|\xi|)^{-\frac{1}{2}}
				|\hat{F}_k|\rp_{L^2(\R^n)}\md\tau  \\
				& +\int_{2^{-k}}^tt^{\frac{1-\mu}{2}}\tau^{\frac{1+\mu}{2}}
				\lp(\tau|\xi|)^{-\frac{1}{2}}(t|\xi|)^{-\frac{1}{2}}|\hat{F}_k|\rp_{L^2(\R^n)}\md\tau\\
				\lesssim& t^{-\frac{\mu}{2}}\int_{0}^{2^{-k}}\tau^{\frac{1+\mu-|1-\mu|}{2}}
				2^{-\frac{1+|\mu-1|}{2}k}\lp F_k(\tau,\cdot)\rp_{L^2(\R^n)}\md\tau\\
				&+t^{-\frac{\mu}{2}}
				\int_{2^{-k}}^t\tau^{\frac{\mu}{2}}
				\lp F_k(\tau,\cdot)\rp_{\dot{H}^{-1}(\R^n)}\md\tau \\
				\lesssim &t^{-\frac{\mu}{2}}\int_0^t
				\tau^{\frac{\mu}{2}}\lp F_k(\tau,\cdot)\rp_{\dot{H}^{-1}(\R^n)}\md\tau.
			\end{split}
		\end{equation*}
		By computation similar to \eqref{equ:2-d-1}, we have
		\begin{equation*}%\label{equ:2-d-3}
			\sup_{{t_0}<t\leq T}t^{\frac{\mu}{2}}\|\nabla_x\phi\|_{L^2(\mathbb R^n)}
			\lesssim \left\|\tau ^{\frac{\mu}{2}}\left(1+|u|  \right)^{\frac{\gamma}{2}}\left(1+\underline{u}  \right)^{\frac{\gamma}{2}}F\right\|_{L^2 ([{t_0},T] \times \mathbb R^n)},\quad\text{for}\quad\mu\in(0,1)\cup(1,2).
		\end{equation*}
		
	\end{proof}
	
	\begin{lemma}\label{lem:energy-t}
		Consider the solution of \eqref{equ:linear-inhomo} with \(n\geq2\) and \(t_0\geq0\). Then for \(\gamma>1\) and \(\mu\in(0,2)\) we have the following energy estimates
		\begin{equation}\label{equ:energy-t}
			\underset{t_0< t\leq T}{\textup{sup}} t^{\frac{\mu}{2}} \left[  \left\|  \partial_t \phi \right\| _{L^2 (\mathbb R^n)}\right]\leq C
			\left \| \tau ^{\frac{\mu}{2}} \left(  1+ |u|  \right)^{\frac{\gamma}{2}} \left(  1+ \underline{u}  \right)^{\frac{\gamma}{2}} F \right\|  _{L^2 ([t_0,T] \times \mathbb R^n  )},
		\end{equation}
		where the positive constant \(C\) depends on \(n\), \(\mu\) and \(\gamma\).
	\end{lemma}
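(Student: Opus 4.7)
The plan is to mirror the Fourier-multiplier/dyadic argument used in the proof of Lemma~\ref{lem:energy}, replacing the spatial gradient $\nabla_x$ by the time derivative $\partial_t$. I would start from the Duhamel representation \eqref{equ:phi-1} of $\phi$ in terms of the kernel $\Phi_1(t,\tau,\xi)$ given in \eqref{equ:Phi-1}. Since $\Phi_1(t,t,\xi)=0$ by the Wronskian structure of the formula, differentiation under the integral in $t$ gives
\[\partial_t \phi(t,x)=\int_{t_0}^{t}(\partial_t \Phi_1)(t,\tau,D)F(\tau,\cdot)\md \tau\]
with no boundary contribution. The convenient way to handle $\partial_t \Phi_1$ is the Bessel recurrence $(H^{\pm}_\nu)'(z)=H^{\pm}_{\nu-1}(z)-\frac{\nu}{z}H^{\pm}_\nu(z)$, which collapses into the clean identity
\[\partial_t\bigl[t^\nu H^+_\nu(t|\xi|)\bigr]=t^\nu|\xi|H^+_{\nu-1}(t|\xi|),\]
and similarly for $H^-_\nu$. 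Thus the effect of $\partial_t$ on the multiplier is simply to shift the Hankel order in the $t$-slot by one and to produce an extra scalar factor $|\xi|$.

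With this in hand, I would apply the Littlewood-Paley decomposition $\partial_t \phi_k = \rho_k(D)\partial_t \phi$ and estimate $\|\partial_t \phi_k(t,\cdot)\|_{L^2(\R^n)}$ by Plancherel, following the same case split as for $\|\phi_k\|_{L^2}$ in Lemma~\ref{lem:energy}: low frequency $2^k\le t^{-1}$ versus high frequency $2^k>t^{-1}$, and $\mu=1$ (so $\nu=0$) versus $\mu\in(0,1)\cup(1,2)$ (so $\nu\neq 0$). The pointwise bounds on the new multiplier come from \eqref{equ:z-small-1} and \eqref{equ:z-large-1} applied at order $\nu-1$: for large $t|\xi|$ one still has $|H^+_{\nu-1}(t|\xi|)|\lesssim (t|\xi|)^{-1/2}$, while for small $t|\xi|$ one has $|H^+_{\nu-1}(t|\xi|)|\lesssim (t|\xi|)^{-|\nu-1|}$. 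Combining these with the extra $|\xi|$ factor, the pointwise estimates on the multiplier differ from those used for $\phi_k$ in \eqref{equ:low-2-d}--\eqref{equ:high-2-d} only by a bounded-on-support adjustment, so the resulting $L^2$ bound takes exactly the same form as \eqref{equ:2-d}. Summing over $k$ and applying Cauchy--Schwarz with the weight $(1+|u|)^{\gamma/2}(1+|\underline{u}|)^{\gamma/2}$ exactly as in \eqref{equ:2-d-1} then yields \eqref{equ:energy-t}.

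The main obstacle is the low-frequency regime when $\mu=1$: here $\nu-1=-1$, so $H^+_{-1}(t|\xi|)$ carries a genuine $z^{-1}$ singularity at the origin, strictly worse than the logarithmic singularity appearing in Lemma~\ref{lem:energy}. I would exploit the cancellation with the prefactor $|\xi|$: the product $|\xi|\,H^+_{-1}(t|\xi|)$ is bounded by $C/t$ for small $t|\xi|$, which is harmless and in fact avoids the logarithmic loss visible in \eqref{equ:low-2-d}. A minor but necessary consistency check is the identity $\Phi_1(t,t,\xi)=0$ (immediate from the antisymmetric form of \eqref{equ:Phi-1}), ensuring that no boundary contribution at $\tau=t$ appears when $\partial_t$ is pulled inside the integral. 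Once these two points are handled, the analogues of the $\mu\in(0,1)\cup(1,2)$ sub-cases from the proof of Lemma~\ref{lem:energy} transcribe essentially verbatim with $\nu-1$ in place of $\nu$, and the lemma follows.
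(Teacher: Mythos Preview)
Your overall plan coincides with the paper's: Duhamel, the identity $\Phi_1(t,t,\xi)=0$, the recurrence $\partial_t[t^\nu H^\pm_\nu(t|\xi|)]=t^\nu|\xi|H^\pm_{\nu-1}(t|\xi|)$, and the same dyadic/case split. The gap is in the low--frequency analysis. You handle the $t$--factor correctly, $|\xi|\,|H^+_{-1}(t|\xi|)|\lesssim t^{-1}$, but you then treat the $\tau$--factor $H^\mp_0(\tau|\xi|)$ by the crude bound \eqref{equ:z-small-1}, which gives
\[
|\partial_t\Phi_1(t,\tau,\xi)|\ \lesssim\ \frac{\tau}{t}\,\bigl|\ln(\tau|\xi|)\bigr|
\qquad\text{for }2^k\le t^{-1},\ \mu=1.
\]
This is \emph{not} uniformly bounded in $k$: for $\tau$ comparable to $t$ and $2^k$ very small one has $|\ln(\tau 2^k)|\to\infty$, so the resulting dyadic estimate cannot be summed to give \eqref{equ:2-d}. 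The trick used for $\phi_k$ in \eqref{equ:low-2-d}---absorbing $|\ln(\tau|\xi|)|$ into $(\tau|\xi|)^{1/2}$---worked there only because of the extra factor $2^k$ coming from $\nabla_x$; for $\partial_t$ there is no such compensating factor. A similar failure occurs for $1<\mu<2$, where bounding $|H^+_{\nu-1}(t|\xi|)|\lesssim(t|\xi|)^{-(1-\nu)}$ and $|H^-_\nu(\tau|\xi|)|\lesssim(\tau|\xi|)^{|\nu|}$ separately yields a factor $(t|\xi|)^{1-\mu}$ that blows up as $k\to-\infty$.

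What is missing is the cancellation built into the antisymmetric combination $H^-_\nu H^+_{\nu-1}-H^+_\nu H^-_{\nu-1}$: writing $H^\pm=J\pm iY$ one sees that only products of $J$'s and a single $\ln(t/\tau)$ survive, and $J_{\nu-1}(t|\xi|)$, $J_{1-\nu}(t|\xi|)$ are \emph{small} near zero. The paper makes this explicit by invoking, at low frequency, the Bessel--$J$ representations \eqref{equ:t-Phi} (for $\mu=1$, from \cite{LiG25}) and the formula from \cite[(23b)]{Wirth-0} (for $\mu\neq1$), which give directly $|\partial_t\Phi_1|\lesssim\frac{\tau}{t}\{\ln\frac{t}{\tau}+1\}\lesssim(\tau/t)^{1/2}$, respectively $|\partial_t\Phi_1|\lesssim(\tau/t)^\mu+\tau t|\xi|^2\lesssim(\tau/t)^{\mu/2}$, uniformly in $k$. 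Once you insert this step, the rest of your outline goes through as written.
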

	\begin{proof}
		Since 
		\[\phi=\int_{t_0}^{t}\Phi_1(t,\tau,D)F\md\tau,\]
		and \(\Phi_1(t,t,D)=0\), we have
		\begin{equation*}
			\partial_t\phi=\int_{t_0}^{t}\partial_t\Phi_1(t,\tau,D)F\md\tau.
		\end{equation*}
		
		As in Lemma~\ref{lem:energy}, we estimate \(\|  \nabla_x \phi \| _{L^2 (\mathbb R^n)}\) by dyadic decomposition. Write
		\begin{equation*}
			\partial_t\phi_k=\int_{t_0}^{t}\partial_t\Phi_1(t,\tau,D)F_k\md\tau.
		\end{equation*}
		we will estimate \(\left\|  \partial_t \phi_k \right\| _{L^2 (\mathbb R^n)}\) by using different formulae of \(\partial_t\Phi_1(t,\tau,D)\) according to different ranges of \(\mu\) and \(2^k\).
		
		\noindent\textbf{Case i \(\mathbf{\mu=1}\)} If \(2^k\leq t^{-1}\), by \cite[(3.48)]{LiG25} we have 
		\begin{equation}\label{equ:t-Phi}
			\partial_t\Phi_1(t,\tau,\xi)=\tau|\xi|\ln\frac{t}{\tau}J_0(\tau|\xi|)J_{-1}(t|\xi|)
			+\frac{\pi}{2}\tau|\xi|\left[J_0(\tau|\xi|)A_{-1}(t|\xi|)-A_0(\tau|\xi|)J_{-1}(t|\xi|)\right],
		\end{equation}
		where \(J_\nu(z)\) is the Bessel function of the first kind with order \(\nu\) such that
		\begin{equation}\label{equ:Bessel-small}
			\begin{split}
				& J_\nu(z)\sim z^\nu, \quad \text{if} \quad z\rightarrow0,\ \nu\neq -1, -2, -3, \cdots \\
				& J_{-n}(z)=(-1)^nJ_n(z),  \quad \text{if} \quad n\in\mathbb{Z}.
			\end{split}
		\end{equation}
		While for \(n\in\mathbb{Z}\), \(z^nA_n(z)\) is entire function of \(z\) and non-zero for \(z=0\). With \eqref{equ:t-Phi} and \eqref{equ:Bessel-small}, we compute
		\begin{equation}\label{equ:a-11}
			\begin{split}
				\|\partial_t\phi_k\|_{L^2(\mathbb R^n)}&=\lp\int_{t_0}^{t}\partial_t\Phi_1(t,\tau,D)F_k\md\tau\rp_{L^2(\mathbb R^n)} \\
				&\lesssim\int_{t_0}^{t}\tau
				\lp|\xi|\left\{\ln\frac{t}{\tau}+1\right\}t|\xi|
				\hat{F}_k|\rp_{L^2(\mathbb R^n)}\md\tau,
			\end{split}
		\end{equation}
		Since \(2^k\leq t^{-1}\) (or \(t|\xi|\leq1\)), we have by \eqref{equ:a-11}
		\begin{equation}\label{equ:a-12}
			\begin{split}
				\|\partial_t\phi_k\|_{L^2(\mathbb R^n)}
				&\lesssim\int_{t_0}^{t}\tau
				\lp|\xi|\left\{\ln\frac{t}{\tau}+1\right\}(t|\xi|)^{-1}
				\hat{F}_k|\rp_{L^2(\mathbb R^n)}\md\tau \\
				&\lesssim\int_{t_0}^{t}\frac{\tau}{t}\left\{\ln\frac{t}{\tau}+1\right\}
				\lp F_k\rp_{L^2(\mathbb R^n)}\md\tau \\
				&\lesssim\int_{t_0}^{t}\left(\frac{\tau}{t}\right)^{\frac{1}{2}}
				\lp F_k\rp_{L^2(\mathbb R^n)}\md\tau.
			\end{split}
		\end{equation}
		
		If \(2^k>t^{-1}\), with out loss of generality we make an extension of \(F\) and assume that \(F(\tau,y)\equiv0\) if \(\tau<t_0\), then we write
		\begin{equation*}
			\partial_t\phi_k=\int_0^{2^{-k}}\partial_t\Phi_1(t,\tau,D)F_k\md\tau
			+\int_{2^{-k}}^t\partial_t\Phi_1(t,\tau,D)F_k\md\tau:=I+II.
		\end{equation*}
		By \cite[Corollary 2.2]{Wirth-0}, 
		\begin{equation}\label{equ:t-Phi-1}
			\partial_t\Phi_1(t,\tau,\xi)=-\frac{i\pi}{4}\tau|\xi|\left[H_0^-(\tau|\xi|)
			H_{-1}^+(t|\xi|)-H_0^+(\tau|\xi|)H_{-1}^-(t|\xi|)\right].
		\end{equation}
		For \(I\), we use the fact that
		\[H_\nu^\pm(z)=J_\nu(z)\pm iY_\nu(z),\]
		and
		\[Y_n(z)=\frac{2}{\pi}\ln zJ_n(z)+A_n(z).\]
		Thus we have
		\begin{equation*}
			\begin{split}
				I=&-\frac{i\pi}{4}\int_0^{2^{-k}}
				\tau|\xi|\bigg[\left(J_0(\tau|\xi|)-i\left(\frac{2}{\pi}\ln (\tau|\xi|)J_{0}(\tau|\xi|)+A_{0}(\tau|\xi|)\right)\right)H_{-1}^+(t|\xi|) \\
				&-\left(J_0(\tau|\xi|)+i\left(\frac{2}{\pi}\ln (\tau|\xi|)J_{0}(\tau|\xi|)+A_{0}(\tau|\xi|)\right)\right)H_{-1}^-(t|\xi|)\bigg],
			\end{split}   
		\end{equation*}
		by \eqref{equ:Bessel-small} and \eqref{equ:z-large-1} one can compute
		\begin{equation}\label{I}
			\|I\|_{L^2(\mathbb R^n)}\lesssim\int_{t_0}^{t}\left(\frac{\tau}{t}\right)^{\frac{1}{2}}
			\lp F_k\rp_{L^2(\mathbb R^n)}\md\tau.
		\end{equation}
		While \eqref{equ:z-large-1} implies that \(II\) can be estimated with the similar computation as in \eqref{equ:high-2-d}, then we get
		\begin{equation}\label{II}
			\|II\|_{L^2(\mathbb R^n)}\lesssim\int_{t_0}^{t}\left(\frac{\tau}{t}\right)^{\frac{1}{2}}
			\lp F_k\rp_{L^2(\mathbb R^n)}\md\tau.
		\end{equation}
		Collecting these results in \eqref{equ:a-11}, \eqref{I} and \eqref{II} and compute as in \eqref{equ:2-d-1}, we prove \eqref{equ:energy-t} for \(\mu=1\).
		
		\noindent\textbf{Case ii \(\mathbf{\mu\in(0,1)\cup(1,2)}\)} If \(2^k\leq t^{-1}\), we have by \cite[(23b)]{Wirth-0}
		\begin{equation*}
			\partial_t\Phi_1(t,\tau,\xi)=\frac{\pi}{2}\csc(\nu\pi)|\xi|\frac{t^\nu}{\tau^{\nu-1}}
			\left\{J_{-\nu}(\tau|\xi|)J_{\nu-1}(t|\xi|)-J_\nu(\tau|\xi|)J_{-\nu+1}(t|\xi|)\right\},
		\end{equation*}
		thus
		\begin{equation*}
			\begin{split}
				\|\partial_t\phi_k\|_{L^2(\mathbb R^n)}&=\lp\int_{t_0}^{t}\partial_t\Phi_1(t,\tau,D)F_k\md\tau\rp_{L^2(\mathbb R^n)} \\
				&\lesssim\int_{t_0}^{t}t^{\frac{1-\mu}{2}}\tau^{\frac{1+\mu}{2}}
				\lp|\xi|\left\{\left[(\tau|\xi|)^{-\frac{1-\mu}{2}}(t|\xi|)^{-\frac{1+\mu}{2}}
				-(\tau|\xi|)^{\frac{1-\mu}{2}}(t|\xi|)^{\frac{1+\mu}{2}}\right]|\hat{F}_k|\right\}\rp_{L^2(\mathbb R^n)}\md\tau \\
				&\lesssim\int_{t_0}^{t}\left\{t^{-\mu}\tau^\mu+t\tau2^{2k}\right\}\lp F_k(\tau,\cdot)\rp_{L^2(\R^n)}\md\tau \\
				&=\int_{t_0}^{t}\left\{t^{-\frac{\mu}{2}}\tau^{\frac{\mu}{2}}
				+(t2^k)^{1+\frac{\mu}{2}}(\tau2^k)^{1-\frac{\mu}{2}}\right\}
				t^{-\frac{\mu}{2}}\tau^{\frac{\mu}{2}}\lp F_k(\tau,\cdot)\rp_{L^2(\R^n)}\md\tau \\
				&\lesssim t^{-\frac{\mu}{2}}\int_{t_0}^{t}\tau^{\frac{\mu}{2}}\lp F_k(\tau,\cdot)\rp_{L^2(\R^n)}\md\tau.
			\end{split}
		\end{equation*}
		
		If \(2^k>t^{-1}\), the analysis is similar to that of \(\mu=1\), thus we omit the details and give
		\[\|\partial_t\phi_k\|_{L^2(\mathbb R^n)}\lesssim t^{-\frac{\mu}{2}}\int_{t_0}^{t}\tau^{\frac{\mu}{2}}\lp F_k(\tau,\cdot)\rp_{L^2(\R^n)}\md\tau, \quad\text{for}\quad 2^k>t^{-1}.\]
		Collecting these results above and compute as in \eqref{equ:2-d-1}, we prove \eqref{equ:energy-t} for \(\mu\in(0,1)\cup(1,2)\).
	\end{proof}

	\begin{lemma}\label{A2}
		Consider the solution of \eqref{equ:linear-inhomo} with \(n\geq2\), \(\mu=1\) and \(t_0=2\).
		Then for \(\delta>0\) which can be arbitrarily small,
		\begin{equation}\label{equ:cutoff-infty-1-1}
			\|\phi_k(t,\cdot)\|_{L^\infty(\R^n)}\leq C2^{\frac{n}{2}k}(\ln t)^2\int_{2}^{t}|t-\tau|^{-\frac{n}{2}}
			\tau\|F_k(\tau,\cdot)\|_{L^1(\R^n)}\ \md\tau.
		\end{equation}
		\begin{equation}\label{equ:cutoff-2-1-low}
			t\|\phi_k(t,\cdot)\|_{L^2(\R^n)}\leq C(\ln t)^2\int_{2}^{t}\tau\|F_k(\tau,\cdot)\|_{\dot{H}^{-1}(\R^n)}\ \md\tau, \quad \text{if} \quad 2^k\leq t^{-1},
		\end{equation}
		\begin{equation}\label{equ:cutoff-2-1-high}
			t^{\frac{1}{2}}\|\phi_k(t,\cdot)\|_{L^2(\R^n)}\leq C2^{\frac{1}{2}k}\ln t\int_{2}^{t}\tau
			\|F_k(\tau,\cdot)\|_{\dot{H}^{-1}(\R^n)}\ \md\tau, \quad \text{if} \quad 2^k> t^{-1}.
		\end{equation}
	\end{lemma}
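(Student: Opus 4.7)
The plan is to combine the Fourier multiplier representation of \(\phi\) from Wirth (see \eqref{equ:phi-1}) with the Littlewood--Paley decomposition. For \(\mu=1\) we have \(\nu=0\), so
\[\Phi_1(t,\tau,\xi)=c\,\tau\bigl[H_0^-(\tau|\xi|)H_0^+(t|\xi|)-H_0^+(\tau|\xi|)H_0^-(t|\xi|)\bigr],\]
and after projection onto \(|\xi|\sim 2^k\) we have \(\phi_k(t,x)=\int_{2}^{t}\rho_k(D)\Phi_1(t,\tau,D)F_k(\tau,x)\md\tau\). All three bounds will be extracted from the two asymptotic regimes of the Hankel functions: the small-argument estimate \(|H_0^\pm(z)|\lesssim|\ln z|\) for \(z\leq 1\) and the oscillatory representation \(H_0^\pm(z)=e^{\pm iz}a_\pm(z)\) with \(a_\pm\in S^{-\frac{1}{2}}\) for \(z\gtrsim 1\), together with the elementary bound \(|\ln x|\sqrt{x}\leq C\) on \((0,1]\), which converts a logarithm into an algebraic factor.

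For the \(L^2\) bounds \eqref{equ:cutoff-2-1-low}--\eqref{equ:cutoff-2-1-high}, I would pass to the Fourier side and use Plancherel with Minkowski to reduce to pointwise estimates on \(\sup_{|\xi|\sim 2^k}|\xi\,\Phi_1(t,\tau,\xi)|\) (working with the \(\dot H^{-1}\) norm of \(F_k\) so that one power of \(|\xi|\) needs to be produced). In the regime \(2^k\leq t^{-1}\) both arguments of \(H_0^\pm\) lie in the logarithmic zone, so \(|\xi\Phi_1|\lesssim\tau|\xi|\,|\ln(t|\xi|)|\,|\ln(\tau|\xi|)|\); using \(t|\xi|\leq 1\) gives \(t|\xi|\,|\ln(t|\xi|)|\lesssim 1\), and \(|\ln(\tau|\xi|)|\lesssim\ln t\) on the relevant range, producing \(t\sup|\xi\Phi_1|\lesssim\tau(\ln t)^2\), which is \eqref{equ:cutoff-2-1-low}. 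In the regime \(2^k>t^{-1}\) I would split the \(\tau\)-integral at \(\tau=2^{-k}\): when \(\tau>2^{-k}\) both arguments are oscillatory and \(|\Phi_1|\lesssim\sqrt{\tau/t}\,|\xi|^{-1}\); when \(\tau\leq 2^{-k}\) only \(t|\xi|\) is large, and the \(H_0^-(\tau|\xi|)\) contribution is again converted to an algebraic factor by \(|\ln x|\sqrt{x}\lesssim 1\). Summing the two sub-cases yields the desired \(t^{1/2}\sup|\xi\Phi_1|\lesssim 2^{k/2}\tau\ln t\), which is \eqref{equ:cutoff-2-1-high}.

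For the \(L^\infty\) estimate \eqref{equ:cutoff-infty-1-1} I would estimate the convolution kernel
\[K_k(t,\tau,x)=\int_{\R^n}e^{ix\cdot\xi}\rho_k(\xi)\Phi_1(t,\tau,\xi)\md\xi\]
in \(L^\infty_x\) and apply Young. In the oscillatory range \(\tau|\xi|,t|\xi|\gtrsim 1\), the product \(H_0^-(\tau|\xi|)H_0^+(t|\xi|)\) becomes \(e^{i(t-\tau)|\xi|}a_-(\tau|\xi|)a_+(t|\xi|)\) with amplitude of symbol order \((t\tau)^{-1/2}|\xi|^{-1}\); the standard angular stationary-phase analysis of the phase \(x\cdot\xi+(t-\tau)|\xi|\) then delivers \(|K_k|\lesssim\sqrt{\tau/t}\,|t-\tau|^{-(n-1)/2}2^{(n-1)k/2}\), which is dominated by the claimed \(\tau|t-\tau|^{-n/2}2^{nk/2}\) via the elementary inequality \(\sqrt{\tau t/|t-\tau|}\cdot 2^{k/2}\geq 1\) whenever \(\tau\geq 2\) and \(2^k\geq t^{-1}\). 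In the low-frequency range \(2^k<t^{-1}\), a trivial bound gives \(\|K_k\|_{L^\infty_x}\leq\|\rho_k\Phi_1\|_{L^1_\xi}\lesssim 2^{nk}\tau\bigl(\ln(1/(t2^k))\bigr)^2\), and reduction to the stated form uses \((2^k|t-\tau|)^{n/2}\leq 1\) together with \(u^{n/2}|\ln u|^2\lesssim 1\) on \((0,1]\).

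The main obstacle is trading the wave dispersive rate \(|t-\tau|^{-(n-1)/2}\) produced by stationary phase for the \(|t-\tau|^{-n/2}\) appearing in \eqref{equ:cutoff-infty-1-1}; this extra half-power is paid for by the additional \(|\xi|^{-1}\) present in \(\Phi_1\), which encodes the \(\mu=1\) damping and must be exchanged against the missing \(|t-\tau|^{-1/2}\) by exploiting the wave-cone geometry. A secondary difficulty is the clean bookkeeping of the logarithms in the low-frequency regime, where \(|\ln(t|\xi|)|\) can far exceed \(\ln t\) at individual frequencies; this excess is absorbed by the algebraic compensating factor \((2^k t)^{n/2}\) coming from the Besov scaling \(2^{nk/2}\) on the right-hand side.
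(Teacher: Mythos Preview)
Your approach mirrors the paper's exactly: split according to whether \(2^k\leq t^{-1}\) or \(2^k>t^{-1}\), use the logarithmic bound on \(H_0^\pm\) for small arguments and the oscillatory symbol representation for large ones, apply Plancherel for the \(L^2\) estimates and stationary phase for the \(L^\infty\) estimate, with the \(\tau\)-integral split at \(2^{-k}\) in the high-frequency case.

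There is one genuine imprecision in your low-frequency argument for \eqref{equ:cutoff-2-1-low}. The step ``\(|\ln(\tau|\xi|)|\lesssim\ln t\) on the relevant range'' is false: when \(2^k\leq t^{-1}\) one only knows \(|k|\geq\log_2 t\), and \(|\ln(\tau|\xi|)|\sim|k|\) can be arbitrarily large. Your proposed remedy in the last paragraph (compensation by \((2^kt)^{n/2}\) from the Besov scaling) is stated for the \(L^\infty\) estimate, but \eqref{equ:cutoff-2-1-low} carries no \(2^{nk/2}\) on the right. The correct fix, which is what the paper does, is to keep both logarithms and the algebraic factor together rather than discarding one log via \(u|\ln u|\lesssim 1\): one has
\[
t|\xi|\cdot|\ln(t|\xi|)|\cdot|\ln(\tau|\xi|)|\ \lesssim\ (t\,2^k)\,k^2,
\]
and since \(|k|\lesssim\ln\bigl(1/(t2^k)\bigr)+\ln t\), the elementary bound \(u(\ln u)^2\lesssim 1\) on \((0,1]\) with \(u=t2^k\leq 1\) yields \((t2^k)\,k^2\lesssim(\ln t)^2\). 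Equivalently, write \(|\ln(\tau|\xi|)|\leq|\ln(t|\xi|)|+\ln(t/\tau)\) and use \(u|\ln u|^2+u|\ln u|\ln t\lesssim(\ln t)^2\). With this correction your outline is complete and coincides with the paper's proof.
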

	\begin{proof}

		\noindent\textbf{Case i: low frequency} Firstly we handle the case \(2^k\leq t^{-1}\). By \eqref{equ:phi-2}, we write
		\[\phi_k(t,x)=\int_{2}^{t}\int_{\R^n}\int_{\R^n}e^{i(x-y)\cdot\xi}
		\tau\big|\ln(\tau|\xi|)\ln(t|\xi|)\big|F_k(\tau,y)\md y\md\xi\md\tau,\]
		hence% for \(\delta>0\) which can be arbitrarily small,
		\begin{equation*}%\label{equ:low-infty-a}
			\begin{split}
				\lp \phi_k(t,\cdot)\rp_{L^{\infty}(\R^n)} &
				\lesssim\int_{2}^{t}k^22^{kn}\tau
				\lp F_k(\tau,\cdot)\rp_{L^1(\R^n)}\md\tau \\
				%  & \leq t^{-\delta}\int_{2}^{t}2^{(n-\delta)k}\tau
				%\lp F_k(\tau,\cdot)\rp_{L^1(\R^n)}\md\tau \\
				& \leq (\ln t)^2
				\int_{2}^{t}2^{\frac{n}{2}k}t^{-\frac{n}{2}}
				\tau\lp F_k(\tau,\cdot)\rp_{L^1(\R^n)}\md\tau \\
				& \leq2^{\frac{n}{2}k}(\ln t)^2\int_{2}^{t}|t-\tau|^{-\frac{n}{2}}
				\tau\lp F_k(\tau,\cdot)\rp_{L^1(\R^n)}\md\tau.
			\end{split}
		\end{equation*}
		Similarly we have
		\begin{equation*}%\label{equ:low-2-a}
			\begin{split}
				\lp \phi_k(t,\cdot)\rp_{L^{2}(\R^n)} & \leq
				\int_{2}^{t}\tau\lp\big|\ln(\tau|\xi|)\ln(t|\xi|)\big|
				\hat{F}_k(\tau,\xi)\rp_{L^{2}(\R^n)}\md\tau \\
				& \leq (\ln t)^2t^{-1}\int_{2}^{t}\tau\lp
				F_k(\tau,\cdot)\rp_{\dot{H}^{-1}(\R^n)}\md\tau.
			\end{split}
		\end{equation*}

		\noindent\textbf{Case ii: high frequency} Next we consider \(2^k\geq t^{-1}\), with out loss of generality we make an extension of \(F\) and assume that \(F(\tau,y)\equiv0\) if \(\tau<2\). Then by \eqref{equ:z-large-1} with \(K=1\) we have
		\[
		\begin{split}
			\phi_k(t,x) & =\int_{2}^{t}\int_{\R^n}\int_{\R^n}
			e^{i(x-y)\cdot\xi}
			\tau H_0^-(\tau|\xi|)H_0^+(t|\xi|)F_k(\tau,y)\md y\md\xi\md\tau, \\
			& =\int_{0}^{2^{-k}}\int_{\R^n}\int_{\R^n}
			e^{i(x-y)\cdot\xi+it|\xi|}
			\tau \ln(\tau|\xi|)a_0^+(t|\xi|)F_k(\tau,y)\md y\md\xi\md\tau \\
			&+\int_{2^{-k}}^t\int_{\R^n}\int_{\R^n}
			e^{i(x-y)\cdot\xi+i(t-\tau)|\xi|}
			\tau a_0^-(\tau|\xi|)a_0^+(t|\xi|)F_k(\tau,y)\md y\md\xi\md\tau,
		\end{split}
		\]
		where \(a_0^-, a_0^+\in S^{-\frac{1}{2}}([1,\infty))\). Then by stationary phase method we have
		\begin{equation*}%\label{equ:high-infty-a}
			\begin{split}
				\lp\phi_k(t,\cdot)\rp_{L^{\infty}(\R^n)} & \lesssim 2^{nk}\int_{0}^{2^{-k}}\tau\ln(\tau|\xi|)
				(t|\xi|)^{-\frac{1}{2}}
				(1+t|\xi|)^{-\frac{n-1}{2}}|\hat{F}_k|\md\tau\\
				& +2^{nk}\int_{2^{-k}}^t\tau(\tau|\xi|)^{-\frac{1}{2}}
				(t|\xi|)^{-\frac{1}{2}}
				(1+|t-\tau||\xi|)^{-\frac{n-1}{2}}|\hat{F}_k|\md\tau \\
				& \lesssim \ln t\,2^{\frac{n}{2}k}
				\int_{0}^{2^{-k}}|t-\tau|^{-\frac{n}{2}}
				\lp F_k(\tau,\cdot)\rp_{L^1(\R^n)}\md\tau \\
				& +2^{\frac{n}{2}k}
				\int_{2^{-k}}^t|t-\tau|^{-\frac{n}{2}}\tau
				\lp F_k(\tau,\cdot)\rp_{L^1(\R^n)}\md\tau \\
				& \leq2^{\frac{n}{2}k}\ln t\int_0^t|t-\tau|^{-\frac{n}{2}}
				\tau\lp F_k(\tau,\cdot)\rp_{L^1(\R^n)}\md\tau \\
				& =2^{\frac{n}{2}k}\ln t\int_2^t|t-\tau|^{-\frac{n}{2}}
				\tau\lp F_k(\tau,\cdot)\rp_{L^1(\R^n)}\md\tau.
			\end{split}
		\end{equation*}
		While direct computation gives the \(L^2\) estimate
		\begin{equation*}%\label{equ:high-2-a}
			\begin{split}
				& \lp\phi_k(t,\cdot)\rp_{L^2(\R^n)} \\
				\lesssim &\int_{0}^{2^{-k}}\tau\lp\ln(\tau|\xi|)
				(t|\xi|)^{-\frac{1}{2}}|\hat{F}_k|\rp_{L^2(\R^n)}\md\tau +\int_{2^{-k}}^t\tau\lp(\tau|\xi|)^{-\frac{1}{2}}
				(t|\xi|)^{-\frac{1}{2}}|\hat{F}_k|\rp_{L^2(\R^n)}\md\tau \\
				\lesssim &t^{-\frac{1}{2}}\ln t2^{\frac{1}{2}k}
				\int_{0}^{2^{-k}}\tau
				\lp F_k(\tau,\cdot)\rp_{\dot{H}^{-1}(\R^n)}\md\tau
				+t^{-\frac{1}{2}}2^{\frac{1}{2}k}
				\int_{2^{-k}}^t
				\tau\lp F_k(\tau,\cdot)\rp_{\dot{H}^{-1}(\R^n)}\md\tau \\
				\lesssim &2^{\frac{1}{2}k}t^{-\frac{1}{2}}\ln t
				\int_0^t
				\tau\lp F_k(\tau,\cdot)\rp_{\dot{H}^{-1}(\R^n)}\md\tau.
			\end{split}
		\end{equation*}
	\end{proof}

	Next we introduce a result of dyadic decomposition from Lemma 3.8 in \cite{Gls1}.
	
	\begin{lemma}\label{lemma:a1}
		For the cut-off function \(\rho\) in \eqref{equ:cut-off},
		define the Littlewood-Paley operators in function $G$ as follows
		\[G_j(t,x)=(2\pi)^{-n}\int_{\mathbb{R}^n}e^{ix\cdot\xi}
		\rho_j(\xi)\hat{G}(t,\xi)d\xi.\]
		Then one has that
		\begin{align*}
			\parallel G\parallel_{L^s_tL^q_x}\leq C\lc \sum\limits_{j=-\infty}^{\infty}\parallel G_j\parallel^2_{L^s_tL^q_x}\rc^{\frac{1}{2}}
			\qquad \text{for\quad $2\leq q<\infty$ and $2\leq s \leq \infty$} \\
		\end{align*}
		and
		\begin{align*}
			\lc\sum\limits_{j=-\infty}^{\infty}\parallel G_j\parallel^2_{L^r_tL^p_x}\rc^{\frac{1}{2}}\leq C\parallel G\parallel_{L^r_tL^p_x}
			\qquad \text{for\quad $1<p\leq2$\quad and \quad $1\leq r \leq 2$}.
		\end{align*}
	\end{lemma}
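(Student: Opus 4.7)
The plan is to reduce both inequalities to the classical Littlewood--Paley square function estimate in the spatial variable, combined with two applications of Minkowski's integral inequality (one in \(x\), one in \(t\)), the direction of Minkowski in each case being dictated by whether the relevant Lebesgue exponent exceeds or is dominated by \(2\).

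First I would recall the standard Littlewood--Paley square function inequality, which follows from the Mihlin--H\"ormander multiplier theorem applied to the dyadic cutoffs \(\rho_j\) in \eqref{equ:cut-off}: for every \(1<m<\infty\) and every Schwartz function \(g\) on \(\R^n\),
\[
\|g\|_{L^m(\R^n)} \approx \Big\|\Big(\sum_{j\in\Z}|g_j|^2\Big)^{1/2}\Big\|_{L^m(\R^n)},
\]
where \(g_j=\rho_j(D)g\). Applying this at each fixed time \(t\) to \(G(t,\cdot)\) with \(m=q\) (resp.\ \(m=p\)), I obtain the two one-sided pointwise-in-\(t\) bounds
\[
\|G(t,\cdot)\|_{L^q_x}\lesssim \Big\|\Big(\sum_j|G_j(t,\cdot)|^2\Big)^{1/2}\Big\|_{L^q_x},\qquad
\Big\|\Big(\sum_j|G_j(t,\cdot)|^2\Big)^{1/2}\Big\|_{L^p_x}\lesssim \|G(t,\cdot)\|_{L^p_x}.
\]

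Next I would swap the \(\ell^2\)-sum in \(j\) with the spatial norm. For \(q\ge 2\), Minkowski's integral inequality (with exponent \(q/2\ge 1\)) gives
\[
\Big\|\Big(\sum_j|G_j(t,\cdot)|^2\Big)^{1/2}\Big\|_{L^q_x}\le \Big(\sum_j\|G_j(t,\cdot)\|_{L^q_x}^2\Big)^{1/2},
\]
while for \(p\le 2\), the same inequality applied with exponent \(2/p\ge 1\) (treating the \(t\)-slice of \(L^p_x\)-norms as the integrand) reverses to give
\[
\Big(\sum_j\|G_j(t,\cdot)\|_{L^p_x}^2\Big)^{1/2}\le \Big\|\Big(\sum_j|G_j(t,\cdot)|^2\Big)^{1/2}\Big\|_{L^p_x}.
\]
Chaining with the previous step yields the time-slice versions
\[
\|G(t,\cdot)\|_{L^q_x}\lesssim \Big(\sum_j\|G_j(t,\cdot)\|_{L^q_x}^2\Big)^{1/2},\qquad
\Big(\sum_j\|G_j(t,\cdot)\|_{L^p_x}^2\Big)^{1/2}\lesssim \|G(t,\cdot)\|_{L^p_x}.
\]

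Finally I would integrate in \(t\) and interchange the \(\ell^2\)-sum with the temporal norm by a second application of Minkowski. For \(s\ge 2\) this gives
\[
\Big\|\Big(\sum_j\|G_j(t,\cdot)\|_{L^q_x}^2\Big)^{1/2}\Big\|_{L^s_t}
\le \Big(\sum_j\|G_j\|_{L^s_tL^q_x}^2\Big)^{1/2},
\]
combined with the first time-slice bound completes the first inequality; the endpoint \(s=\infty\) is handled by taking the \(L^\infty_t\) of both sides pointwise before summing. Similarly, for \(1\le r\le 2\) Minkowski in \(t\) yields
\[
\Big(\sum_j\|G_j\|_{L^r_tL^p_x}^2\Big)^{1/2}\le \Big\|\Big(\sum_j\|G_j(t,\cdot)\|_{L^p_x}^2\Big)^{1/2}\Big\|_{L^r_t},
\]
which combined with the second time-slice bound gives the second inequality. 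The only subtle point, and thus the main thing to be careful about, is bookkeeping the direction of Minkowski in each of the four applications (two spatial, two temporal) corresponding to whether the exponent is above or below \(2\); there is no analytical difficulty beyond the standard Littlewood--Paley theorem.
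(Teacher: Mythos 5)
Your proof is correct. The paper does not actually prove this lemma; it is imported verbatim as Lemma 3.8 of Georgiev--Lindblad--Sogge \cite{Gls1}, and your argument---the classical Littlewood--Paley square function estimate at each fixed time, followed by Minkowski's integral inequality to swap the $\ell^2_j$-sum first with $L^q_x$ (or $L^p_x$) and then with $L^s_t$ (or $L^r_t$), with directions of each swap determined by whether the Lebesgue exponent lies above or below $2$---is precisely the standard proof given in that reference. The bookkeeping of the four Minkowski applications is handled correctly: $q\ge 2$ and $s\ge 2$ both push $\ell^2$ to the outside, while $p\le 2$ and $r\le 2$ both push $\ell^2$ to the inside, and the hypothesis $p>1$ is exactly what is needed for the spatial square-function bound. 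The only very minor quibble is that the remark about the endpoint $s=\infty$ being special is unnecessary, since the mixed-norm Minkowski inequality $\|a\|_{L^\infty_t\ell^2_j}\le\|a\|_{\ell^2_j L^\infty_t}$ holds directly (pointwise in $t$, each term $a_j(t)$ is dominated by its $\sup_t$); similarly $r=1$ requires no special care. In short, this matches the intended proof.
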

	
	Finally, with Lemma~\ref{lemma:a1} in hand we can prove the inhomogeneous Strichartz estimate without characteristic weight at the endpoint \(q=q_0=\frac{2(n+2)}{n}\).
	
	\begin{lemma}\label{lem:time-weighted-Strichartz}
		Suppose that \(n\geq2\) and \(\phi\) solves the Cauchy problem \eqref{equ:linear-inhomo} with \(\mu=1\) and \(t_0=2\), then for \(\delta>0\) which can be arbitrarily small, it holds
		\begin{equation*}%\label{equ:inhomo-estimate-q0}
			\lp \frac{t^{\frac{1}{q_0}}}{\big(\ln (2+t)\big)^2} \phi\rp_{L^{q_0}(\ra)}\leq C(\delta_1,\delta_2)\lp \tau^{\frac{1}{p_0}}F\rp
			_{L^{p_0}(\ra)},
		\end{equation*}
		where \(q_0=\frac{2(n+2)}{n}\) and \(\frac{1}{p_0}+\frac{1}{q_0}=1\).
	\end{lemma}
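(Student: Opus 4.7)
The plan is to prove this endpoint inhomogeneous Strichartz estimate by adapting the classical argument for the undamped wave equation, treating the time-weights $t^{1/q_0}$ and $\tau^{1/p_0}$ (satisfying $1/p_0+1/q_0=1$) as bookkeeping for the extra factor of $\tau$ appearing in the Duhamel formula \eqref{equ:phi-1} when $\mu=1$, while absorbing the $(\ln t)^2$ loss coming from the low-frequency Hankel asymptotics \eqref{equ:z-small-1} into the denominator $(\ln(2+t))^2$ on the left. The first step is to invoke Lemma~\ref{lemma:a1}, which applies since $q_0=\frac{2(n+2)}{n}>2$ and $p_0=\frac{2(n+2)}{n+4}<2$. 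Square-function inequalities in both directions reduce the proof to a uniform-in-$j$ frequency-localized bound
\[
\Big\|\tfrac{t^{1/q_0}}{(\ln(2+t))^2}\phi_j\Big\|_{L^{q_0}_{t,x}}\lesssim \|\tau^{1/p_0}F_j\|_{L^{p_0}_{t,x}},
\]
where $\phi_j=\rho_j(D)\phi$ and $F_j=\rho_j(D)F$.

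For each fixed $j$, I would interpolate at each time slice between the $L^1_x\to L^\infty_x$ dispersive inequality \eqref{equ:cutoff-infty-1-1} and the $L^2_x\to L^2_x$ energy bound deduced from \eqref{equ:cutoff-2-1-low}--\eqref{equ:cutoff-2-1-high} after converting $\dot H^{-1}$-norms into $L^2$-norms via Bernstein. A Riesz--Thorin interpolation with parameter $\theta=2/q_0=n/(n+2)$ should yield
\[
\|\phi_j(t,\cdot)\|_{L^{q_0}_x}\lesssim (\ln t)^2 \int_2^t |t-\tau|^{-n/(n+2)}\,\tau\, \|F_j(\tau,\cdot)\|_{L^{p_0}_x}\,d\tau,
\]
uniformly in $j$: the $2^{nj/2}$ loss in the $L^\infty$ bound raised to the power $\theta$ produces $2^{jn/(n+2)}$, which matches exactly the Bernstein factor incurred when passing from the $L^{q_0}_x$-norm to the $L^2_x$-norm of a frequency-$2^j$ piece. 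The dispersive exponent $n/(n+2)$ is precisely the one required by the Hardy--Littlewood--Sobolev inequality, since $1-1/p_0+1/q_0=2/q_0=n/(n+2)$.

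Finally, I would split $\tau=\tau^{1/p_0}\cdot\tau^{1/q_0}$, absorb $\tau^{1/p_0}$ into the weighted source $\tau^{1/p_0}F_j$, and pair the residual $\tau^{1/q_0}$ with the outer weight $t^{1/q_0}$ on the left. Since $(\tau/t)^{1/q_0}\leq1$ for $2\leq\tau\leq t$, what remains is exactly a HLS convolution in time at the sharp endpoint mapping $L^{p_0}_t\to L^{q_0}_t$, and the logarithmic loss $(\ln t)^2$ is cancelled by $(\ln(2+t))^{-2}$. Summing over $j$ via the second half of Lemma~\ref{lemma:a1} closes the argument. The main obstacle I anticipate is uniformity in $j$ across the frequency threshold $2^j\sim t^{-1}$: Lemma~\ref{A2} supplies structurally different $L^2$ bounds above and below this threshold, and one must verify that the Riesz--Thorin interpolation produces the same effective $|t-\tau|^{-n/(n+2)}$ kernel in both regimes and that the $2^j$-powers truly cancel at the Strichartz exponent $q_0$; a secondary worry is that HLS at the endpoint is usually stated in Lorentz spaces, but the gain $(\tau/t)^{1/q_0}\leq1$ should be enough to restore the Lebesgue endpoint.
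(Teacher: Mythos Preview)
Your overall strategy coincides with the paper's: frequency-localize, interpolate an $L^1\!\to\! L^\infty$ dispersive bound against an $L^2\!\to\! L^2$ bound to reach the $L^{p_0}\!\to\! L^{q_0}$ endpoint, apply Hardy--Littlewood--Sobolev in $t$, and reassemble with Lemma~\ref{lemma:a1}. However, the way you propose to carry out the interpolation has a genuine gap.

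Riesz--Thorin must be applied to the propagator $H^j_{t,\tau}$ at each \emph{fixed} pair $(t,\tau)$, not to the already $\tau$-integrated inequalities \eqref{equ:cutoff-infty-1-1}--\eqref{equ:cutoff-2-1-high}. Two integral bounds with different $\tau$-kernels (here $|t-\tau|^{-n/2}$ versus the constant $1$) do not interpolate to the mixed kernel $|t-\tau|^{-n/(n+2)}$ you write down. More concretely, if one extracts the propagator bound implicit in \eqref{equ:cutoff-2-1-high} (after Bernstein) one gets $\|H^j_{t,\tau}\|_{L^2\to L^2}\lesssim 2^{-j/2}t^{-1/2}(\ln t)\,\tau$, whereas the sharp bound is $\|H^j_{t,\tau}\|_{L^2\to L^2}\lesssim 2^{-j}(\tau/t)^{1/2}$ (see \eqref{equ:L-2-high-1} in the paper's proof). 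The extra factor $2^{j/2}\tau^{1/2}$ in the weakened bound means the $2^j$-powers no longer cancel in the interpolation: you are left with $2^{jn/(2(n+2))}$, which diverges upon summing over high frequencies. The paper therefore does not quote Lemma~\ref{A2}; it re-derives sharp fixed-time propagator bounds $\|H^j_{t,\tau}F\|_{L^\infty}$ and $\|t^{1/2}H^j_{t,\tau}F\|_{L^2}$ directly, splitting into \emph{three} regimes (low $2^j\le 1/t$, medium $1/t\le 2^j\le 1/\tau$, high $2^j\ge 1/\tau$) rather than the two regimes in Lemma~\ref{A2}. With the sharp bounds the interpolation gives $\|t^{1/q_0}H^j_{t,\tau}F\|_{L^{q_0}}\lesssim(\ln(2+t))^2|t-\tau|^{-n/(n+2)}\|\tau^{1/p_0}F\|_{L^{p_0}}$ uniformly in $j$, so no $(\tau/t)^{1/q_0}$ trick is needed.

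One minor point: your HLS worry is unfounded. Since $1<p_0<q_0<\infty$ and $1-\tfrac{1}{p_0}+\tfrac{1}{q_0}=\tfrac{n}{n+2}$ exactly, this is the standard non-endpoint HLS inequality in Lebesgue spaces, not a Lorentz endpoint.
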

	\begin{proof}
		Using the idea from \cite{Gls2}, we define
		\[H^j_{t,\tau}F(x)=\int_{\R^n}\int_{\R^n}e^{i(x-y)\cdot\xi}
		\rho_j(\xi)\tau H_0^-(\tau|\xi|)H_0^+(t|\xi|)F(\tau,y)\md y\md\xi,\]
		and
		\[A_jF(t,x)=\int_{2}^{t}H^j_{t,\tau}F(x)\md\tau,\quad
		F_k(t,x)=\int_{\R^n}\int_{\R^n}e^{i(x-y)\cdot\xi}
		\rho(2^{-k}|\xi|)F(\tau,y)\md y\md\xi,\]
		where \(\rho\) is the cut off function defined in \eqref{equ:cut-off}. Then \[\phi(t,x)=\sum_{j=-\infty}^{\infty}A_jF(t,x),
		\quad F(\tau,y)=\sum_{k=-\infty}^{\infty}F_k(\tau,y).\]
		Next we estimate \(H^j_{t,\tau}F\) with respect to the scale of \(\lambda_j=2^j\).
		
		\noindent\textbf{Case i: high frequency} We first consider \(\lambda_j\geq\frac{1}{\tau}\geq\frac{1}{t}\), which implies
		\begin{equation*}%\label{equ:H-high}
			H^j_{t,\tau}F(x)=\int_{\R^n}\int_{\R^n}e^{i(x-y)\cdot\xi+i(t-\tau)\xi}
			\rho_j(\xi)\tau(\tau|\xi|)^{-\frac{1}{2}}(t|\xi|)^{-\frac{1}{2}}
			F(\tau,y)\md y\md\xi,
		\end{equation*}
		then by the stationary phase method we get
		\begin{equation}\label{equ:L-infty-high-1}
			\begin{split}
				\| H^j_{t,\tau}F(x)\|_{L^\infty(\R^n)} & \lesssim
				2^{-\frac{1}{2}j}t^{-\frac{1}{2}}2^{nj}(1+2^j|t-\tau|)^{-\frac{n-1}{2}}
				\|\tau F(\tau,\cdot)\|_{L^1(\R^n)} \\
				&  \lesssim2^{\frac{n}{2}j}|t-\tau|^{-\frac{n}{2}}
				\|\tau F(\tau,\cdot)\|_{L^1(\R^n)}.
			\end{split}
		\end{equation}
		While direct computation yields
		\begin{equation}\label{equ:L-2-high-1}
			\begin{split}
				\|t^{\frac{1}{2}}H^j_{t,\tau}F(x)\|_{L^2(\R^n)} & \lesssim
				2^{-j}\|\tau^{\frac{1}{2}} F(\tau,\cdot)\|_{L^2(\R^n)}.
			\end{split}
		\end{equation}
		Interpolating between \eqref{equ:L-infty-high-1} and \eqref{equ:L-2-high-1}, we obtain
		\begin{equation*}%\label{equ:high-q-a}
			\lp t^{\frac{1}{q_0}}H^j_{t,\tau}F(\cdot)\rp_{L^{q_0}(\R^n)}
			\leq|t-\tau|^{-\frac{n}{n+2}}\lp\tau^{\frac{1}{p_0}}
			F(\tau,\cdot)\rp_{L^{p_0}(\R^n)}, \quad\text{if}\quad 2^j\tau\geq1.
		\end{equation*}
		
		\noindent\textbf{Case ii: low frequency} Next we handle \(\lambda_j\leq\frac{1}{t}\leq\frac{1}{\tau}\). In this case,
		\begin{equation*}%\label{equ:H-low}
			H^j_{t,\tau}F(x)=\int_{\R^n}\int_{\R^n}e^{i(x-y)\cdot\xi}
			\rho_j(\xi)\tau\ln(\tau|\xi|)\ln(t|\xi|)
			F(\tau,y)\md y\md\xi,
		\end{equation*}
		then direct computation yields
		\begin{equation}\label{equ:L-infty-low-1}
			\begin{split}
				\| H^j_{t,\tau}F(x)\|_{L^\infty(\R^n)} & \lesssim
				|j|^22^{nj}
				\left\|\tau F(\tau,\cdot)\right\|_{L^1(\R^n)} \\
				&  \lesssim t^{-\delta}\big(\ln (2+t)\big)^22^{(n-\delta)j}(2^jt)^{-\frac{n}{2}+\delta}
				\left\|\tau F(\tau,\cdot)\right\|_{L^1(\R^n)} \\
				&  \lesssim2^{\frac{n}{2}j}\big(\ln (2+t)\big)^2|t-\tau|^{-\frac{n}{2}}
				\left\|\tau F(\tau,\cdot)\right\|_{L^1(\R^n)}.
			\end{split}
		\end{equation}
		and
		\begin{equation}\label{equ:L-2-low-1}
			\begin{split}
				\left\|t^{\frac{1}{2}}H^j_{t,\tau}F(x)\right\|_{L^2(\R^n)} &
				\lesssim t^{\frac{1}{2}}|j|^22^j\left\|\tau F(\tau,\cdot)\right\|_{\dot{H}^{-1}(\R^n)} \\
				&\lesssim t^{\frac{1}{2}}t^{-\frac{1}{2}}\big(\ln (2+t)\big)^2
				\tau^{-\frac{1}{2}}\left\|\tau F(\tau,\cdot)\right\|_{\dot{H}^{-1}(\R^n)} \\
				& \lesssim
				2^{-j}\big(\ln (2+t)\big)^2\left\|\tau^{\frac{1}{2}} F(\tau,\cdot)\right\|_{L^2(\R^n)}.
			\end{split}
		\end{equation}
		Interpolating between \eqref{equ:L-infty-low-1} and \eqref{equ:L-2-low-1}, we obtain
		\begin{equation*}%\label{equ:high-q-a-2}
			\begin{split}
				\lp t^{\frac{1}{q_0}-\delta}H^j_{t,\tau}F(\cdot)\rp_{L^{q_0}(\R^n)}
				\leq&\big(\ln (2+t)\big)^2
				|t-\tau|^{-\frac{n}{n+2}}\lp\tau^{\frac{1}{p_0}}
				F(\tau,\cdot)\rp_{L^{p_0}(\R^n)}, \quad\text{if}\quad 2^jt\leq1.
			\end{split}
		\end{equation*}
		
		\noindent\textbf{Case iii: medium frequency} The remaining case is
		\(\frac{1}{t}\leq\lambda_j\leq\frac{1}{\tau}\), which can be treated by the method in Case i and Case ii, thus we omit the details.
		
		Collecting the results together, we have
		\[\lp t^{\frac{1}{q_0}}\big(\ln (2+t)\big)^{-2}H^j_{t,\tau}F(\cdot)\rp_{L^{q_0}(\R^n)}
		\leq|t-\tau|^{-\frac{n}{n+2}}\lp\tau^{\frac{1}{p_0}}
		F(\tau,\cdot)\rp_{L^{p_0}(\R^n)}, \quad\text{for}\quad j\in\mathbb{Z}.\]
		Then by setting $1-(\frac{1}{p_0}-\frac{1}{q_0})=
		\frac{n}{n+2}$ and the
		Hardy-Littlewood-Sobolev inequality, we arrive at
		\begin{equation*}%\label{equ:t-x-mixed}
			\begin{aligned}
				\lp t^{\frac{1}{q_0}}\big(\ln (2+t)\big)^{-2}A_jF\rp_{L^{q_0}(\ra)}=&\left\|\int_{2}^{\infty}t^{\frac{1}{q_0}}\big(\ln (2+t)\big)^{-2}H_{t,\tau}^jF\,d\tau \right\|_{L^{q_0}(\ra)}\\
				\leq &C\left\|\int_{2}^{\infty}
				|t-\tau|^{-\frac{n}{n+2}}\left\|\tau^{\frac{1}{p_0}} F(\tau,\cdot)\right\|_{L^{p_0}(\R^n)}\,
				\md\tau\right\|_{L^{q_0}([T_0,\infty))} \\
				\leq &C\left\|t^{\frac{1}{p_0}}F\right\|_{L^{p_0}(\ra)}.
			\end{aligned}
		\end{equation*}
		
		Then an application of Lemma~\ref{lemma:a1} implies
		\begin{equation*}%\label{equ:combine}
			\begin{aligned}
				\|t^{\frac{1}{q_0}}\big(\ln (2+t)\big)^{-2}\phi\|_{L^{q_0}}^2\leq &C\sum_{j\in\Z}\| t^{\frac{1}{q_0}}\big(\ln (2+t)\big)^{-2}H^j_{t,\tau}F\|_{L^{q_0}}^2 \\
				\leq &C\sum_{j\in\Z}\sum_{k:|j-k|\leq C_0}\|t^{\frac{1}{q_0}}\big(\ln (2+t)\big)^{-2}H^j_{t,\tau}F_k\|_{L^{q_0}}^2 \\
				\leq &C\sum_{j\in\Z}\sum_{k:|j-k|\leq C_0}\|t^{\frac{1}{p_0}}F_k\|_{L^{p_0}}^2\\
				\le& C\, \|t^{\frac{1}{p_0}}F\|_{L^{p_0}(\ra)}^2.
			\end{aligned}
		\end{equation*}
	\end{proof}

\end{document}